\def \R {\mathbb{R}}
\def \Z {\mathbb{Z}}
\def \P {\mathbb{P}}
\def \LL {\mathcal{L}}
\def \< {\langle}
\def \> {\rangle}
\renewcommand\P{{\mathbb{P}}}
\newcommand\eps{{\varepsilon}}
\DeclareMathOperator{\supp}{supp}
\DeclareMathOperator{\Incomp}{Incomp}
\def \Dhat {\widehat{D}}
\theoremstyle{plain}
 \newtheorem{theorem}{Theorem}[section]
 \newtheorem{proposition}[theorem]{Proposition}
 \newtheorem{lemma}[theorem]{Lemma}
 \newtheorem{corollary}[theorem]{Corollary}
\newtheorem{remark}[theorem]{Remark}
\theoremstyle{definition}
\newtheorem{definition}[theorem]{Definition}
\begin{document}

\title[Sparse Simple Spectrum]{Sparse Random  matrices have simple spectrum}

\author[K. Luh]{Kyle Luh$^*$}
\address{$^*$John A. Paulson School of Engineering and Applied Sciences, Harvard University}
\email{kluh@seas.harvard.edu}
\thanks{$^*$K. Luh is supported by the National Science Foundation under Award
No. 1702533}

\author[V. Vu]{Van Vu${}^\dagger$}
\address{${}^\dagger$Department of Mathematics, Yale University, New Haven 06520}
\email{van.vu@yale.edu}
\thanks{${}^\dagger$V. Vu is supported by   NSF  grant DMS 1307797  and AFORS grant FA9550-12-1-0083.}

\date{}
\maketitle

\begin{abstract}
	Let $M_n$ be a class of symmetric sparse random matrices, with independent entries $M_{ij} = \delta_{ij} \xi_{ij}$ for $i \leq j$.  $\delta_{ij}$ are i.i.d. Bernoulli random variables taking the value $1$ with probability $p \geq n^{-1+\delta}$ for any constant $\delta > 0$ and $\xi_{ij}$ are i.i.d. centered,  subgaussian random variables.  We show that with high probability this class of random matrices has simple spectrum (i.e. the eigenvalues appear with multiplicity one).  We can slightly modify our proof to show that the adjacency matrix of a sparse Erd\H{o}s-R\'enyi graph has simple spectrum for $n^{-1+\delta } \leq p \leq 1- n^{-1+\delta}$.  These results are optimal in the exponent.  The result for graphs has connections to the notorious graph isomorphism problem.  
\end{abstract}

\section{Introduction}
The gaps between eigenvalues are natural objects to study in random matrix theory and are of central importance in the field.  Since the introduction of the notion of a random matrix, there have been numerous inquiries into the spacings of consecutive eigenvalues of symmetric random matrices.  For a matrix with eigenvalues $\lambda_i$, we denote the gaps by $\delta_i = \lambda_{i+1} - \lambda_i$.  
The limiting global gap distribution for Gaussian matrices (GOE and GUE) has been well understood for some time and can be deduced from Wigner's semi-circle law \cite[Chapter 6,7]{mehta2004random}.  Recent progress on universality has extended these results to large classes of random variables \cite{TaoVuLocalUniversality,ErdosYauGapUniversality}.  At finer levels, meaning under proper normalization and for a particular gap, the limiting distribution for the GUE was only calculated in 2013 by Tao \cite{tao2013asymptotic}.  The four moment condition establishes that this distribution is universal for any random variable that matches the gaussian up to the first four moments.  Using advanced dynamical techniques, Erd\H{o}s and Yau removed this condition \cite{erdos2012gap}.

Although these results describe the behavior of a single gap, $\delta_i$, bounds on the smallest gap, $\delta_{min} = \min_i \delta_i$, for general matrices were still out of reach.  Bourgade and Ben-Arous \cite{arous2013extreme} showed that $\delta_{min}$ is on the order of $n^{-5/6}$ for the GUE ensemble.  Yet, currently, this issue does not fall into the scope of the four moment theorem.  Although tail bounds for individual $\delta_i$ were known for more general matrices \cite{tao2011random,tao2010random}, they were too weak to survive the union bound over all $i$ to conclude anything about $\delta_{min}$.  Under severe restrictions on the smootheness and decay of the entries, Erd\H{o}s, Schlein and Yau \cite{ erdHos2009wegner} proved that
$$
\P\left (E n^{1/2} - \frac{\varepsilon}{n^{1/2}} \leq \lambda_i \leq \lambda_k \leq E n^{1/2} + \frac{\eps}{n^{1/2}} \text{ for some i} \right) = o(\eps^{k^2}).
$$         
for any fixed $k \geq 1$, any $\eps > 0$ and any bounded $E \in \mathbb{R}$.  Applying a union bound to this result yields
$$
\P(\delta_{min} \leq \delta n^{-1/2}) = o(n \delta^3) + \exp( -c n)
$$
for any $\delta > 0$.
Despite the strong bound, this result applies only to a small set of smooth random variables.  Outside of this set, even whether $\delta_{min}$ could equal zero could not be settled by these previous results and was only resolved in 2014.    Phrased differently, the fact that a random matrix typically has simple spectrum (i.e. all eigenvalues have multiplicity one) is a recent result due to Tao and Vu \cite{TaoVuSimple}.  They show that the probability that a random matrix has simple spectrum is bounded below by $1 - n^{-A}$ for any constant $A$.  In \cite{NguyenTaoVuGaps}, this qualitative statement was refined to quantitative tail bounds on the gaps between the eigenvalues and probability that a random matrix has simple spectrum was improved to $1 - \exp(- n^c)$ for a small unspecified constant $c$.     

In the realm of graphs, whether or not a graph has simple spectrum (i.e. its adjancecy matrix has simple spectrum) has practical complexity implications.  Although great strides have been made recently on the notorious graph isomorphism problem \cite{babai2016graph}, the best running time guarantees are still \emph{quasipolynomial}.   However, Babai, Grigoryev and Mount \cite{BabaiIsomorphism} demonstrated ealier that the graph isomorphism problem restricted to the graphs with simple spectrum is in complexity class $\mathcal{P}$.  A corollary of the random matrix result in \cite{TaoVuSimple} is that dense 
Erd{\H o}s-R{\'e}nyi random graphs have simple spectrum which answered a question of Babai's that had been open since the '80's.         

In the past few years, there has been renewed interest in sparse random matrices due to their applications in data science, where they require less storage space and fewer operations to manipulate \cite{DasguptaSparseJL, OSNAP,ClarksonWoodruff}.  In other settings, sparse random matrices reflect the intuition that in many natural problems, each data point is dependent on only a few of the many parameters \cite{CompressedSensing, FaceRecognition, ImageResolution,LuhVuDictionary}.  For random graphs, the more interesting behavior occurs for sparse graphs.  Many real-world networks are sparse and applications often prefer graphs with fewer edges that maintain the necessary properties.  

In this work, we establish that sparse random matrices have simple spectrum.  Our result is nearly optimal in terms of the range of sparsity. In the dense range, our work improves the probability bound in \cite{NguyenTaoVuGaps} to $1- \exp(-n^{1/128})$.  The particular value of the constant $(1/128)$ is not meaningful and has not been optimized.

\section{Main Results}
Let $M_n$ be an $n \times n$ symmetric random matrix with entries $m_{ij} = \delta_{ij} \xi_{ij}$  for all $i \leq j$, where $\delta_{ij}$ is a Bernoulli random variable that takes the value $1$ with probability $p = p(n)$ and $\xi_{ij}$ are iid random variables with mean zero, variance one, and subgaussian moment bounded by $B$.  Our main result is the following.
\begin{theorem}\label{thm:main}
For $0 < \delta \leq 1$ a constant and $p \geq n^{-1+\delta}$, then with probability at least $1-\exp(-(np)^{1/128})$, $M_n$ has simple spectrum.
\end{theorem}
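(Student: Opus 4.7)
The plan is to extend the strategy of Tao--Vu and Nguyen--Tao--Vu to the sparse regime. The starting point is the standard minor reduction: if $M_n$ has an eigenvalue $\lambda$ of multiplicity at least two, then by taking a suitable linear combination of two associated orthonormal eigenvectors one can arrange for the last coordinate to vanish, yielding a unit vector $u \in \R^{n-1}$ with $M_{n-1} u = \lambda u$ and $X \cdot u = 0$, where $M_{n-1}$ is the principal $(n-1) \times (n-1)$ minor and $X$ is the vector formed by the deleted off-diagonal entries of the last column. Since $X$ is independent of $M_{n-1}$, the task reduces to showing that, with high probability, no unit eigenvector of $M_{n-1}$ is orthogonal to $X$. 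A union bound over the choice of which row/column to delete (or over sub-collections of such indices) will eventually convert this into a statement about $M_n$ itself.

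The heart of the argument is a structural theorem establishing that every eigenvector of $M_{n-1}$ is ``spread enough'' on a set of typical coordinates to preclude exact orthogonality to a random sparse $X$. Specifically, I would prove that, with probability at least $1 - \exp(-(np)^c)$, every unit eigenvector $u$ of $M_{n-1}$ has a level set $I \subset \{1,\dots,n-1\}$ of size at least $(np)^{1-o(1)}$ on which $|u_i| \gtrsim (np)^{-1/2 - o(1)}$. This is the sparse analogue of the incompressibility used in the dense case, and would be proved via a compressible/incompressible dichotomy in the spirit of Rudelson--Vershynin, combined with a conditioning-and-swapping argument that exploits row independence. Compressible $u$ is ruled out by a small-ball estimate applied to $(M_{n-1} - \lambda)u$ restricted to a row, while incompressibility (and the quantitative ``no-gaps'' lower bound above) is extracted by iterating this one-row anti-concentration on a carefully chosen net of candidate vectors.

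Given the spread structure of $u$, I then bound the small-ball probability $\P(X \cdot u = 0)$. Writing $X_i = \delta_i \xi_i$, a typical realization makes $\mathrm{Bin}(n-1, p)$-many coordinates of $X$ nonzero; conditioning on this pattern, the delocalization guarantee implies that the active coordinates of $u$ still contain a spread subset, so an Erd\H{o}s--Littlewood--Offord type estimate yields $\P(X \cdot u = 0 \mid M_{n-1}) \leq (np)^{-1/2 + o(1)}$ for every eigenvector on the good event. A union bound over the $n-1$ eigenvectors of $M_{n-1}$, followed by summation over the index of the deleted row (or a single minor reduction combined with a net over $\lambda$), converts these pieces into the quantitative tail bound $1 - \exp(-(np)^{1/128})$, the exponent $1/128$ being a bookkeeping constant from the cascade of polynomial losses.

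The main obstacle is the sparse delocalization step. In the dense regime, row concentration and classical Wigner-type bounds prevent eigenvectors from being too sparse with little effort; in the Bernoulli-masked regime the picture is genuinely more delicate because low-degree or isolated vertices create honest localization obstructions, and one must carry out the whole argument on the complement of a ``bad vertex'' event whose probability is itself only $\exp(-(np)^c)$. Controlling this event, and pushing the incompressibility lemma through in the presence of heavy-tailed row norms, is where I expect both the technical difficulty and the eventual exponent $1/128$ to arise. By comparison, once delocalization is in hand, the Littlewood--Offord anti-concentration on a random support of size $(np)^{1-o(1)}$ is comparatively classical and contributes only mildly to the final probability bound.
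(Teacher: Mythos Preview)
Your minor reduction and compressible/incompressible dichotomy are on the right track and match the paper, but the anti-concentration step as you have written it is too weak to close the argument. You propose to extract from delocalization only a spread level set of size $(np)^{1-o(1)}$ and then apply an Erd\H{o}s--Littlewood--Offord bound to obtain $\P(X\cdot u=0\mid M_{n-1})\le (np)^{-1/2+o(1)}$. After the union bound over the $n-1$ eigenvectors this yields a failure probability of order $n\,(np)^{-1/2+o(1)}$; for $p=n^{-1+\delta}$ this is $n^{1-\delta/2+o(1)}$, which diverges rather than tending to zero, and even for $p$ constant it gives only $n^{1/2+o(1)}$-type bounds. In particular you cannot reach a failure probability of $\exp(-(np)^{1/128})$ from a polynomial small-ball estimate.

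What the paper does instead is to upgrade the structural information about eigenvectors from mere incompressibility to an exponentially large \emph{regularized least common denominator}. Concretely, after ruling out compressible vectors (their Corollary~\ref{corollary:comp}) they show, via carefully designed nets on level sets of the LCD and the small-ball bound of Proposition~\ref{prop:smallballprob}, that with probability $1-\exp(-cn)$ no eigenvector has $\widehat D(v,\alpha)\le \exp((np)^{1/32})$ (Propositions~\ref{prop:lowLCD} and~\ref{prop:verylowLCD}). On that event, Proposition~\ref{prop:smallballprobability} gives $\P(X\cdot v=0)\le C/(\sqrt{p}\,\widehat D)\le \exp(-(np)^{1/64})$, which is exponentially small and survives the union bound over $n$ eigenvectors with room to spare. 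The ``no-gaps'' delocalization you describe is essentially the spread-set Lemma~\ref{lemma:spreadset} and is used only as an input to the LCD argument; it is not by itself strong enough to deliver the needed anti-concentration. So the missing idea in your outline is the LCD machinery (Definitions~\ref{def:LCD} and~\ref{def reg LCD}) and the associated level-set nets, which convert incompressibility into an exponential small-ball bound.
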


Denote by $G(n,p)$, the random variable that takes values in the labeled graphs on $[n]$ vertices and distributed such that each edge appears independently with probability $p$.  
\begin{theorem}\label{thm:adjacencymatrix}
Let $A_n$ be the adjacency matrix of $G(n,p)$ and $0 < \delta \leq 1$ a constant.  For $n^{-1+\delta} \leq p \leq 1 - n^{-1+ \delta} $, with probability at least $1-\exp((np)^{-1/128})$, $A_n$ has simple spectrum.
\end{theorem}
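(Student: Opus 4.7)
The plan is to reduce Theorem~\ref{thm:adjacencymatrix} to a centered-entry version of Theorem~\ref{thm:main} and then handle a rank-one perturbation. I would start by writing
$$
A_n \;=\; B \;-\; pI \;+\; pJ,
$$
where $J$ is the all-ones matrix and $B$ is the symmetric matrix with $B_{ij} = \delta_{ij} - p$ for $i \neq j$ and $B_{ii} = 0$. The shift $-pI$ leaves gaps invariant, so $A_n$ has simple spectrum if and only if $B + pJ$ does. The entries of $B$ above the diagonal are independent, centered, bounded by $1$ (hence uniformly subgaussian, with variance $p(1-p)$), and each of their two values $1-p$ and $-p$ is attained with probability at least $n^{-1+\delta}$ precisely under the two-sided hypothesis $n^{-1+\delta} \leq p \leq 1-n^{-1+\delta}$.

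First I would rerun the proof of Theorem~\ref{thm:main} with $B$ in place of $M_n$. That proof uses the sparse Bernoulli factor $\delta_{ij}$ only through (i) subgaussian tail and operator-norm bounds and (ii) small-ball / anti-concentration estimates on random linear combinations $\sum c_i X_i$ of the entries. The centered variable $X_{ij} = \delta_{ij}-p$ has the same bounded, uniformly subgaussian tails, and its two-point law supplies exactly the needed anti-concentration: each of its two values is attained with probability at least $n^{-1+\delta}$, which is the role the sparsity lower bound plays in Theorem~\ref{thm:main}. A line-by-line substitution should therefore yield that $B$ has simple spectrum with probability at least $1-\exp(-(np)^{1/128})$.

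Next I would upgrade simple spectrum of $B$ to simple spectrum of $B + pJ$. Since $pJ = pn\,\mathbf{u}\mathbf{u}^T$ with $\mathbf{u} = n^{-1/2}\mathbf{1}$ is a rank-one positive semidefinite perturbation, the secular equation shows that the eigenvalues of $B + pJ$ strictly interlace those of $B$ whenever every eigenvector $v_j$ of $B$ satisfies $\langle v_j, \mathbf{1}\rangle \neq 0$. Thus the theorem reduces to ruling out, with failure probability $\exp(-(np)^{1/128})$, the event that some eigenvector of $B$ lies in the hyperplane $\mathbf{1}^\perp$.

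I expect this last point---excluding eigenvectors of $B$ perpendicular to $\mathbf{1}$---to be the main obstacle and the only place where the proof of Theorem~\ref{thm:main} genuinely needs modification. The natural approach is to strengthen its conclusion from ``no two eigenvectors coincide'' to ``no eigenvector lies on a fixed hyperplane.'' The extra linear constraint $\langle v,\mathbf{1}\rangle = 0$ should be absorbable into the small-ball step at the cost of one extra dimension, but it must be carried through the conditioning / row-swapping arguments at the heart of Theorem~\ref{thm:main}; this is the step I would re-examine most carefully.
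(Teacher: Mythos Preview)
Your reduction differs from the paper's, and the step you flag as the ``main obstacle'' is a genuine gap that the machinery at hand does not close.

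The paper does not split off the rank-one piece $pJ$ and recombine via the secular equation. Instead it works with $A_n$ directly: to rule out structured eigenvectors of $A_n$, it reruns the net arguments of Propositions~\ref{prop:lowLCD} and~\ref{prop:verylowLCD} for the quantity $\|(A_n - p(J_n-I_n) - \lambda)v - y\|_2$, where $y$ ranges over the one-dimensional set $\{pJ_n v:v\in \mathcal{S}^{n-1}\}=\{\theta\,\mathbf{1}:|\theta|\le pn\}$. Adding a net over this line costs only a subexponential factor in the union bound, and then
\[
\inf_y \|(A_n - p(J_n-I_n) - \lambda)v - y\|_2 \le \|(A_n - (\lambda - p))v\|_2
\]
transfers the conclusion to $A_n$ itself, so every eigenvector of $A_n$ has large regularized LCD. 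The last-column argument then proceeds exactly as in Theorem~\ref{thm:main}, since L\'evy concentration is insensitive to the mean of the Bernoulli column entries.

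By contrast, your route needs $\langle v,\mathbf{1}\rangle\neq 0$ for every eigenvector $v$ of $B$, with failure probability at most $\exp(-(np)^{1/128})$. The small-ball step cannot deliver this: that step bounds $\P(w^T X=0)$ by exploiting the randomness of a column $X$ \emph{independent} of $w$, whereas $\mathbf{1}$ is a fixed deterministic vector. All of the structural conclusions available here---incompressibility, large regularized LCD---concern the spread and arithmetic complexity of $v$ and impose no constraint whatsoever on the single linear functional $\sum_i v_i$; vectors in $\mathbf{1}^\perp$ can be incompressible with arbitrarily large LCD. Proving that eigenvectors of a discrete random matrix avoid a fixed hyperplane with exponentially small failure probability is a genuinely different (and harder) statement than anti-concentration against an independent random vector, and nothing in the proof of Theorem~\ref{thm:main} supplies it. The paper's one-dimensional-net trick sidesteps the issue entirely by never leaving the matrix $A_n$.
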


\begin{remark}
Observe that for $p = o(\log n/n)$, there is likely to be at least two row of zeros in $M_n$ and $A_n$.  This yields a zero eigenvalue with multiplicity at least 2.  Thus, our bound on $p$ is near optimal. We record here that the upperbound on $p$ does not appear in Theorem \ref{thm:main} as even with $p=1$, there is additional randomness from the $\xi_{ij}$.  However, for the adjacency matrix, for $p=1$, we are left with the deterministic matrix $J_n - I_n$ which has eigenvalue $-1$ with multiplicity $n-1$.  By symmetry, the upperbound is also near optimal. In fact, we believe the true sparsity threshold is on the order of $p \geq \log n/n$, but our current method needs a technical refinement to achieve this bound and we postpone this matter for another occassion.
\end{remark}

The remainder of the paper is organized as follows.  In Section \ref{sec:proofstrategy} we give a birds-eye view of the proof, avoiding any technical statements.  
In Section \ref{sec:notation} we state several notational conveniences.  In Sections \ref{sec:compressible} and \ref{sec:incompressible} we develop the necessary tools to control the deviation of $M_n$ acting on two different sets of vectors (compressible and incompressible respectively).  Finally, in Section \ref{sec:mainproof}, we combine the results of the previous sections to obtain a short proof of Theorem \ref{thm:main}.  In the final section, we discuss the necessary modifications to handle adjacency matrices of sparse random graphs.     

\section{Proof Strategy} \label{sec:proofstrategy}
The overall approach is analogous to that used in \cite{TaoVuSimple} and \cite{NguyenTaoVuGaps}.
For $M_n$ as in Theorem \ref{thm:main}, we write
\begin{equation} \label{eq:decompositionMn}
M_n = 
\begin{pmatrix} 
  M_{n-1} & X \\ 
  X^T & m_{nn} 
\end{pmatrix},
\end{equation}
where $X = (x_1, \dots, x_{n-1}) \in \mathbb{R}^{n-1}$.  For a matrix $X$, let $\lambda_{n}(X) \leq \dots \leq \lambda_1(X)$ be the eigenvalues of $M_n$. Let $v = (x , a)$ (where $x \in \mathbb{R}^{n-1}$ and $a \in \mathbb{R}$) be the unit eigenvector associated to $\lambda_i(M_n)$. By definition we have
$$
\begin{pmatrix}
M_{n-1} & X \\ 
X^T & m_{nn}
\end{pmatrix}
\begin{pmatrix}
x \\
a
\end{pmatrix} = 
\lambda_i(M_n) \begin{pmatrix}
x \\
a
\end{pmatrix}.
$$
Restricting our attention to the top $n-1$ coordinates gives
$$
(M_{n-1} - \lambda_i(M_n)) x + a X = 0.
$$
Let $w$ be the eigenvector of $M_{n-1}$ corresponding to $\lambda_{i}(M_{n-1})$.  After multiplying on the right by $w^T$, we deduce that
$$
|a w^T X | = |w^T (M_{n-1} - \lambda_i(M_n)) x| = |\lambda_i(M_{n-1}) - \lambda_i(M_n)||w^T x|.
$$
By the Cauchy interlacing law, we must have $\lambda_i(M_n) \leq \lambda_i (M_{n-1}) \leq \lambda_{i-1} (M_n)$. 
Therefore, if we let $\mathcal{E}_i$ be the event that $\lambda_i(M_n) = \lambda_{i+1}(M_n)$, then assuming $a \neq 0$, on the event $\mathcal{E}_i$, this implies that $w^T X = 0$.  A simple union bound over all choices of $a$ in $w$ removes our assumption that $a \neq 0$.  Finally, if $\P(\mathcal{E}_i)  = o(n^{-1})$ for all $i$, then a union bound yields the result.

Our task thus reduces to showing that an eigenvector $w$ of $M_{n-1}$ has the property that $\P(w^T X = 0)$ is small.  Note that $X$ and $w$ are independent.  By now, this is a well-studied phenomenon \cite{TaoVuLO, NguyenVuLO, RudelsonVershyninLO}.  This small-ball probability is intimately related to the arithmetic structure of the vector $w$.  The goal then is to prove that with high probability, an eigenvector of the submatrix $M_{n-1}$ will not have much structure.  For this intermediate objective, we make the simple observation that for $v$, a unit eigenvector of $M_{n}$, with eigenvalue $\lambda$, 
$$
(M_n - \lambda) v = 0.
$$    
For $x$ close to $v$, $(M_n - \lambda) x \approx 0$.  This is reminiscent of the least singular problem for a random matrix and the details of our argument draws heavily from the techniques of \cite{RudelsonVershyninLO, VershyninInvertibility, BasakRudelsonInvertibility}.  Choosing an appropriate net of the highly structured vectors in $\mathcal{S}^{n-1}$ and a net of potential eigenvalues, we show that these vectors are unlikely to be eigenvectors.  

This aerial view of the argument obscures the technical obstacles that must be overcome when the matrices we deal with are sparse.  As the random variables are zero with large probability, the small-ball probabilities that appear tend to be too large for direct union bounds to work.  Delicate nets and careful balancing of probabilities is required to implement our overall strategy.

\section{Notation} \label{sec:notation}
For a vector $v \in \mathbb{R}^n$ and an index set $I \subset [n]$, let $v_I \in \mathbb{R}^{|I|}$ be the restriction of $v$ onto that index set and $P_I(v) \in \mathbb{R}^n$ be the vector $v$ with all coordinates in $I^c$ zeroed out.  

We will also need finer control over index sets $I \subset [n]$. We let $ord(I)$ be the vector in $\mathbb{N}^{|I|}$ populated by elements of $I$ in increasing order.  Then, we define $I[k] := ord(I)_k$ and $I[k:k'] := ord(I)_{[k:k']}$, where $[k:k'] := \{i: k \leq i \leq k'\}$.  

To avoid repition, we impose the assumption that, unless explicitly stated, any constants (usually numbered) in the statement of the Lemmas, Propositions and Theorems depend only on $\delta$ and the subgaussian moment $B$.  Additionally, standard asymptotic notation (e.g. $o$, $O$) is stated with the assumption of $n$ tending to infinity.

\section{Compressible Vectors} \label{sec:compressible}
\subsection{Preliminaries}
We divide the unit sphere into two classes.  The \emph{compressible} vectors are those that are close to sparse vectors and the remaining vectors are called \emph{incompressible}.
\begin{definition}
For $M \in \mathbb{N}$, a vector $x$ is in $\text{Sparse}(M)$ if $|\text{Supp}(x)| \leq M$.  For a $\delta \in (0,1)$, we denote 
$$
\text{Comp}(M, \delta) := \{x \in \mathcal{S}^{n-1} : \exists y \in \text{Sparse}(M) \text{ such that } \|x - y\|_2 \leq \delta \}.
$$
The incompressible vectors are defined to be 
$$\text{Incomp}(M, \delta):= \{ x \in \mathcal{S}^{n-1}: x \notin \text{Comp}(M, \delta) \}.$$
\end{definition}

We will often make use of the following bound on the operator norm of $M_n$. 
\begin{proposition} \label{prop:opnorm}
There exist constants $K, c_{\ref{prop:opnorm}} > 0$ such that
$$
\P(\|M_n\| \geq K \sqrt{p n} ) \leq \exp(-c_{\ref{prop:opnorm}} p n).
$$
\end{proposition}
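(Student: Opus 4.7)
The plan is an $\varepsilon$-net argument over the unit sphere, combined with a truncation step to control the subgaussian tails of the entries.

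By symmetry of $M_n$, $\|M_n\| = \sup_{x \in S^{n-1}} |\langle M_n x, x \rangle|$. I would fix a $1/2$-net $\NN \subset S^{n-1}$ with $|\NN| \leq 5^n$; a standard approximation argument then yields $\|M_n\| \leq 2\max_{x \in \NN} |\langle M_n x, x\rangle|$.

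To handle the subgaussian tails I truncate: set $\tilde m_{ij} = m_{ij}\one_{|m_{ij}|\leq T}$ with $T = C_0\sqrt{pn}$. By the subgaussian bound on $\xi_{ij}$ and a union bound over the $\binom{n+1}{2}$ distinct entries,
\begin{equation*}
\P(M_n \neq \tilde M_n) \leq 2n^2 \exp(-C_0^2 pn/B^2) \leq \exp(-c_0 pn),
\end{equation*}
provided $C_0$ is chosen large depending on $B$ and $\delta$ (using $pn \geq n^\delta$). For each fixed $x \in \NN$, the quadratic form
\begin{equation*}
\langle \tilde M_n x, x\rangle = \sum_i \tilde m_{ii}\, x_i^2 + 2\sum_{i<j} \tilde m_{ij}\, x_i x_j
\end{equation*}
is a sum of independent, approximately centered random variables (absorbing the small bias from truncation) with sum of variances $O(p)$ and individual magnitudes at most $2T\|x\|_\infty^2$. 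Bernstein's inequality gives
\begin{equation*}
\P(|\langle \tilde M_n x, x\rangle| \geq K\sqrt{pn}) \leq 2\exp\Bigl(-c\, \min\{K^2 n,\ K\sqrt{pn}/(T \|x\|_\infty^2)\}\Bigr).
\end{equation*}
A union bound over $\NN$ combined with the truncation event would then give the stated failure probability for $K$ sufficiently large.

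The main obstacle is that the Bernstein estimate degrades for vectors $x$ concentrated on few coordinates, where $T\|x\|_\infty^2$ dominates the denominator: for spread-out $x$ (say $\|x\|_\infty^2 \lesssim 1/n$) the first term in the minimum dominates and gives $\exp(-c K^2 n)$, but the second term can collapse the bound for highly concentrated vectors. The resolution is to stratify the argument by effective support size: for $x$ essentially supported on a set of size $k$, pass to a finer net on the union of $k$-dimensional sub-spheres (of cardinality at most $\binom{n}{k} 5^k$) and control the quadratic form via the norm of the corresponding $k\times k$ principal submatrix of $\tilde M_n$, which in turn is bounded by a Frobenius/Chernoff estimate on the number of nonzero entries together with the truncation level $T$. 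Combining the spread and concentrated cases yields the claim.
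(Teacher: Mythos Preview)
The paper does not give a self-contained proof here; it simply cites Theorem~1.7 of Basak--Rudelson and remarks that the argument there adapts to the symmetric setting. Your $\varepsilon$-net $+$ truncation $+$ Bernstein scheme is a natural starting point, and you correctly identify the obstacle: the Bernstein tail $\exp\bigl(-cK/\|x\|_\infty^2\bigr)$ collapses for vectors with a large coordinate and cannot be rescued by enlarging $K$.

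However, the proposed fix has a genuine gap. First, stratifying by ``effective support size $k$'' is not well-posed: a unit vector with one coordinate of size $1/2$ and the remaining mass spread uniformly is not close to any $k$-sparse vector for small $k$, yet it is precisely the kind of vector on which your Bernstein bound fails; you would need to decompose the quadratic form itself, not just the sphere. Second, and more seriously, the Frobenius/Chernoff control on the $k\times k$ principal submatrix does not survive the union bound. The truncation level $T = C_0\sqrt{pn}$ is forced by the target failure probability $\exp(-cpn)$, so the crude bound $\|\tilde M_n[S]\|_F \leq T\sqrt{N_S}$ requires $N_S \leq (K/C_0)^2 = O(1)$ for the conclusion; the Chernoff estimate $\P(N_S \geq m) \leq (ek^2p/m)^m$ with constant $m$ then decays only like $(k^2p)^{O(1)}$, far too slowly to beat $\binom{n}{k}$ once $k$ is moderate. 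Even the sharper sub-exponential bound $\P\bigl(\|M_n[S]\|_F^2 \geq K^2 pn\bigr) \leq \exp(-c K^2 pn)$ does not help: after the union bound one needs $k\log(en/k) \leq c' K^2 pn$, which fails throughout the range where $k\log(n/k)$ is much larger than $pn$ --- and this range is nonempty whenever $p = n^{-1+\delta}$ with small $\delta$. The sparse operator-norm bounds in the literature proceed quite differently (e.g., via a Seginer-type comparison to maximal row norms together with a separate concentration step, or a combinatorial moment argument); the direct net-plus-Bernstein route does not close in the sparse regime.
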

\begin{proof}
	The proof of Theorem 1.7 in \cite{BasakRudelsonInvertibility} can easily be modified to handle symmetric random matrices.  
\end{proof}
\subsection{Compressible Vectors}

\begin{proposition}\label{prop:compressible}
There exist constants $C_{\ref{prop:compressible}}, \bar{C}_{\ref{prop:compressible}}, c_{\ref{prop:compressible}}, c'_{\ref{prop:compressible}} > 0$ depending only on $B$, such that for 
$$
p \geq \frac{C_{\ref{thm:main}} \log n}{n}, \quad  \ell_0 := \left \lceil \frac{\log 1/(8p)}{\log \sqrt{pn}} \right \rceil \text{ and } \lambda \in [-C_{\ref{prop:opnorm}} \sqrt{pn}, C_{\ref{prop:opnorm}} \sqrt{pn}].  
$$
we have
$$
\P(\exists x \in  \text{Comp}(M, \rho) \text{ s.t. } \|(M_n - \lambda) x\|_2 \leq C_{\ref{prop:compressible}} \rho \sqrt{pn}) \leq \exp(-c'_{\ref{prop:compressible}} pn)
$$
where $\rho := (\bar{C}_{\ref{prop:compressible}})^{-\ell_0 - 6}$ and $p^{-1} \leq M \leq c_{\ref{prop:compressible}}n$.
\end{proposition}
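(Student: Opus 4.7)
The plan is to follow the invertibility-of-random-matrices paradigm of Rudelson--Vershynin in its sparse-symmetric adaptation from \cite{BasakRudelsonInvertibility}. The event we must exclude is the existence of a compressible unit vector that is an approximate eigenvector of $M_n - \lambda$. A first reduction replaces compressible vectors by genuinely sparse unit vectors: if $x = y+z$ with $y \in \text{Sparse}(M) \cap \mathcal{S}^{n-1}$ and $\|z\|_2 \leq \rho$, then by Proposition \ref{prop:opnorm} and the hypothesis $|\lambda| \leq C_{\ref{prop:opnorm}} \sqrt{pn}$ we have $\|(M_n-\lambda)z\|_2 \leq (K + C_{\ref{prop:opnorm}})\rho\sqrt{pn}$ on an event of probability $1-\exp(-c\,pn)$, so an approximate-eigenvector event at scale $C\rho\sqrt{pn}$ for $x$ forces one at scale $C'\rho\sqrt{pn}$ for $y$.

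Next I would run an $\varepsilon$-net argument on sparse $y$ together with a $\tau$-net on $\lambda$. The set $\text{Sparse}(m) \cap \mathcal{S}^{n-1}$ admits a net of size at most $\binom{n}{m}(3/\eta)^m$, and $[-C_{\ref{prop:opnorm}}\sqrt{pn},\,C_{\ref{prop:opnorm}}\sqrt{pn}]$ admits a $\tau$-net of size $O(\sqrt{pn}/\tau)$. For a fixed pair $(y,\lambda)$ in this product net, I would estimate $\P(\|(M_n-\lambda)y\|_2 \leq t)$ in three sub-steps: (i) split off the independent half of $M_n$ by restricting to roughly $n/2$ rows whose entries are jointly independent, since the symmetry constraint only couples entries across the diagonal; (ii) for each such row $R_j$ with $j \notin \supp(y)$, write $\langle R_j, y \rangle - \lambda y_j = \sum_{i \in \supp(y)} \delta_{ji}\xi_{ji} y_i$ and apply a sparse Littlewood--Offord estimate to obtain a per-row small-ball bound of order $p + s/\sqrt{pn}$ under a mild spread assumption on $y$; (iii) tensorize over the independent rows to produce a bound of the form $(Cp + Cs/\sqrt{pn})^{cn}$. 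Balancing this against the product of the two net cardinalities yields the desired $\exp(-c'\,pn)$.

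The genuine obstacle is that the per-row small-ball estimate degrades catastrophically when $\supp(y)$ is small: if $m \lesssim p^{-1}$ then a typical row has no nonzero entries on $\supp(y)$, so $\langle R_j, y\rangle = 0$ deterministically and no anti-concentration is available. This is what produces the multi-scale structure visible in the statement, where $\rho = \bar{C}_{\ref{prop:compressible}}^{-\ell_0-6}$ and $\ell_0 = \lceil \log(1/(8p))/\log\sqrt{pn}\rceil$. I would handle it by peeling: partition $\text{Comp}(M,\rho)$ into shells $\text{Comp}(m_{\ell+1},\rho_{\ell+1})\setminus \text{Comp}(m_\ell,\rho_\ell)$ with $m_\ell$ growing geometrically from $p^{-1}$ up to $M$ and $\rho_\ell$ decaying at a matched rate, treating each shell with its own net and with the per-row bound applied at the scale where it is effective. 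The number of scales required is exactly $\ell_0$, and the constant $6$ in the exponent absorbs rounding losses incurred when passing between adjacent shells. The delicate bookkeeping of net cardinalities, mesh scales, and anti-concentration rates across these $\ell_0$ shells is the technical heart of the argument and the reason the sparse setting is substantially harder than the dense one.
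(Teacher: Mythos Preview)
Your reduction from compressible to sparse and the net-plus-tensorization scheme for moderately sparse vectors (support between $p^{-1}$ and $M$) are essentially what the paper does. But the handling of the very sparse regime has a genuine gap, and your reading of where $\ell_0$ comes from is off.

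You correctly identify that when $|\supp(y)| \lesssim p^{-1}$ the per-row anti-concentration fails: most rows have no nonzero entries on $\supp(y)$, so $\langle R_j,y\rangle=0$ identically. Your proposed fix---peeling in shells $\text{Comp}(m_{\ell+1},\rho_{\ell+1})\setminus\text{Comp}(m_\ell,\rho_\ell)$ with $m_\ell$ ranging from $p^{-1}$ to $M$---does not address this, because the shells start at $p^{-1}$; vectors in $\text{Comp}(p^{-1},\rho_0)$ lie entirely below your ladder and you have given no mechanism for them. No amount of rescaling the net mesh restores anti-concentration when the relevant sum is almost surely empty.

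The paper treats this regime by a completely different, combinatorial device (its Lemma~A.1, following \cite{BasakRudelsonInvertibility}): with high probability, for any sparse support $J$ with $|J|\le 1/(8p)$, there exist $\gtrsim |J|pn$ rows having \emph{exactly one} nonzero entry on $J$ (and with a prescribed sign). For such a row there is no cancellation, so $|\langle R_j,y\rangle|\ge |y_{i_0}|$ deterministically. This yields a \emph{deterministic} lower bound on $\|(M_n-\lambda)y\|_2$ on a single high-probability event, with no per-vector small-ball estimate and no net over these very sparse $y$ at all. The multi-scale parameter $\ell_0=\lceil\log(1/(8p))/\log\sqrt{pn}\rceil$ arises here, not in your peeling: one decomposes the coordinates of $y$ into blocks $z_\ell=y_{[(pn)^{(\ell-1)/2}+1:(pn)^{\ell/2}]}$ for $\ell=1,\dots,\ell_0$, locates an $\ell_*$ with $\|z_{\ell_*}\|_2\ge(\bar C K)^{-\ell_*}$, and applies the single-nonzero-entry lemma at scale $\kappa=(pn)^{(\ell_*-1)/2}$. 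The geometric loss $(\bar C K)^{-\ell_0}$ across these $\ell_0$ blocks is exactly the source of $\rho=(\bar C)^{-\ell_0-6}$. Only after this combinatorial step, for supports in the range $[p^{-1},M]$, does the paper run the anti-concentration plus net argument you outline (its Corollary~A.6 and the two-level net $\mathcal{M}_{IJ}$), and there no further multi-scale peeling is needed.
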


\begin{remark}
To gain some understanding of these parameters, observe that for $p = n^{-1 + \delta}$ for some constant $\delta > 0$, then $\ell_0 = O(1)$.  Near the threshold, when $p = \frac{\log n}{n}$, $\ell_0 =  \Theta(\log n / \log \log n)$ so $\rho = \exp(- O(\log n/ \log \log n))$.
\end{remark}

Although this result is highly non-trivial, the proof follows from a straightforward adaptation of Proposition 3.1 in \cite{BasakRudelsonInvertibility}.  We include the proof with the necessary modifications in Appendix \ref{sec:proofofcomp}.

From this result, we obtain a bound on the probability that an eigenvector is compressible. 
\begin{corollary}\label{corollary:comp}
For $p^{-1} \leq M \leq c_{\ref{prop:compressible}}n$, there exists a constant $c_{\ref{corollary:comp}} > 0$ such that
$$
\P( \exists \text{ a unit eigenvector } \in \text{Comp}(M, \rho)) \leq \exp( - c_{\ref{corollary:comp}} p n).
$$

\end{corollary}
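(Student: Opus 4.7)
The plan is a standard net-and-union-bound argument combining Proposition \ref{prop:opnorm} with Proposition \ref{prop:compressible}. The key observation is that if $v$ is a unit eigenvector with eigenvalue $\lambda$, then $(M_n - \lambda)v = 0$, so we simply need to preclude $v \in \text{Comp}(M,\rho)$ by applying Proposition \ref{prop:compressible} at a discretization of the possible $\lambda$.

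First, I would invoke Proposition \ref{prop:opnorm} so that on an event of probability at least $1 - \exp(-c_{\ref{prop:opnorm}}pn)$ we have $\|M_n\| \leq K\sqrt{pn}$, and hence every eigenvalue of $M_n$ lies in $I := [-K\sqrt{pn}, K\sqrt{pn}]$. Next, fix a $\tfrac{1}{2}C_{\ref{prop:compressible}}\rho\sqrt{pn}$-net $\mathcal{N} \subset I$; then $|\mathcal{N}| \leq 4K/(C_{\ref{prop:compressible}}\rho)$. Apply Proposition \ref{prop:compressible} at each $\lambda' \in \mathcal{N}$ and take a union bound: the probability that for some $\lambda'\in\mathcal{N}$ there exists $x \in \text{Comp}(M,\rho)$ with $\|(M_n - \lambda')x\|_2 \leq C_{\ref{prop:compressible}}\rho\sqrt{pn}$ is at most
\[
\frac{4K}{C_{\ref{prop:compressible}}\rho}\exp(-c'_{\ref{prop:compressible}}pn).
\]

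Now suppose we are on the complement of all these bad events, and let $v$ be a unit eigenvector of $M_n$ with eigenvalue $\lambda$. Then $\lambda \in I$, so choose $\lambda'\in\mathcal{N}$ with $|\lambda - \lambda'| \leq \tfrac{1}{2}C_{\ref{prop:compressible}}\rho\sqrt{pn}$. We get
\[
\|(M_n - \lambda')v\|_2 = |\lambda - \lambda'| \leq \tfrac{1}{2}C_{\ref{prop:compressible}}\rho\sqrt{pn} < C_{\ref{prop:compressible}}\rho\sqrt{pn},
\]
so by the good event at $\lambda'$ we must have $v \notin \text{Comp}(M,\rho)$. Thus no eigenvector can be compressible on the intersection of the good events.

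The only point needing a sanity check is that the logarithm of the net size does not swamp the probability gain. Since $\log|\mathcal{N}| = O(\log(1/\rho)) = O(\ell_0)$ and, under $p \geq C_{\ref{thm:main}}\log n/n$, $\ell_0 = O(\log n/\log\log n) = o(pn)$, the factor $|\mathcal{N}|$ is absorbed and the total failure probability is still of the form $\exp(-c_{\ref{corollary:comp}}pn)$ for a suitable constant $c_{\ref{corollary:comp}} > 0$. This, combined with the $\exp(-c_{\ref{prop:opnorm}}pn)$ from the operator norm bound, yields the claimed bound. No step here looks genuinely hard; the main (very mild) obstacle is simply verifying that $\log(1/\rho)$ is small compared to $pn$ throughout the admissible range of $p$, which is immediate from the explicit formula for $\ell_0$.
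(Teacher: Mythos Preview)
Your proposal is correct and follows essentially the same approach as the paper: bound the eigenvalues into $[-K\sqrt{pn},K\sqrt{pn}]$ via Proposition~\ref{prop:opnorm}, net that interval, apply Proposition~\ref{prop:compressible} at each net point, and union bound. The only cosmetic difference is that the paper uses a finer $n^{-1}$-net (of size $\sim n\sqrt{pn}$) and absorbs the resulting $O(\log n)$ factor by enlarging $C_{\ref{thm:main}}$, whereas you use the coarser $\tfrac{1}{2}C_{\ref{prop:compressible}}\rho\sqrt{pn}$-net, which is in fact a slightly cleaner choice since it matches the threshold in Proposition~\ref{prop:compressible}.
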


\begin{proof}
By Lemma \ref{prop:opnorm}, all eigenvalues of $M_n$ are in the interval $I = [-K \sqrt{pn}, K \sqrt{pn}]$.  Consider an $n^{-1}$-net of $I$ which can be constructed to be of size at most $2K n \sqrt{pn}$.  For $\lambda \in I$ that is an eigenvalue of $M_n$ with eigenvector $x \in \text{Comp}(M,\rho)$, there exists an element of the net, $\lambda_0$, such that
$$
\|(M_n - \lambda_0) x\|_2 = \|(\lambda - \lambda_0) x\|_2 \leq n^{-1}. 
$$
However, by Proposition \ref{prop:compressible}, 
$$
\P(\exists x \in  \text{Comp}(M, \rho) \text{ s.t. } \|(M_n - \lambda_0) x\|_2 \leq n^{-1}) \leq \exp(-c'_{\ref{prop:compressible}} pn).
$$
Taking a union bound over the $\lambda_0$ and increasing $C_{\ref{thm:main}}$ if necessary, yields the result.
\end{proof}

\section{Incompressible Vectors} \label{sec:incompressible}
For these vectors, we develop small-ball probability bounds that are dependent on a measure of arithmetic strucutre (Least Common Denominator)\cite{RudelsonVershyninLO,VershyninInvertibility}.  First, we introduce the following partition of the indices for $v \in \text{Incomp}(M, \rho)$.  Let 
$$
\sigma(v):= \left \{ k: \frac{\rho}{\sqrt{2n}} \leq |v_k| \leq \frac{1}{\sqrt{M}} \right \}.
$$
Due to the incompressibility of $v$, the cardinality of this set is large.

\begin{lemma}\label{lemma:spreadset}
For $v \in \text{Incomp}(M, \rho)$,
$$
|\sigma(v)| \geq \frac{M \rho^2 }{2}.
$$

\end{lemma}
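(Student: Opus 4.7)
My plan is to show this via a standard partition of the coordinates of $v$ into three regimes (large, medium, small), where $\sigma(v)$ is precisely the ``medium'' regime, and then use the incompressibility of $v$ to bound the size of the medium set from below.

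Concretely, partition $[n]$ as $S \sqcup \sigma(v) \sqcup T$, where
\[
S = \{k : |v_k| > 1/\sqrt{M}\}, \qquad T = \{k : |v_k| < \rho/\sqrt{2n}\},
\]
and $\sigma(v)$ is the middle piece as defined. The key observation is that $S$ is automatically small: since $\|v\|_2 = 1$, we have $|S|/M \leq \sum_{k\in S} v_k^2 \leq 1$, so $|S| \leq M$ and hence the vector $P_S(v) \in \text{Sparse}(M)$. By the definition of $\text{Incomp}(M,\rho)$ applied with $y = P_S(v)$, we get
\[
\sum_{k\notin S} v_k^2 = \|v - P_S(v)\|_2^2 > \rho^2.
\]

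Next I bound the contribution from $T$ trivially: each term satisfies $v_k^2 < \rho^2/(2n)$ and $|T|\leq n$, so $\sum_{k\in T} v_k^2 < \rho^2/2$. Subtracting this from the previous inequality yields
\[
\sum_{k\in \sigma(v)} v_k^2 > \rho^2 - \rho^2/2 = \rho^2/2.
\]
Finally, each term in the sum on the left is at most $1/M$ by the upper cutoff in the definition of $\sigma(v)$, so $|\sigma(v)|/M \geq \rho^2/2$, which rearranges to the desired bound $|\sigma(v)| \geq M\rho^2/2$.

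There is no real obstacle here; the whole argument is a one-page verification that relies only on the $\ell^2$ normalization of $v$, the definition of incompressibility, and a pigeonhole on the two thresholds $\rho/\sqrt{2n}$ and $1/\sqrt{M}$. The only point worth double-checking is the convention in the definition of $\text{Incomp}(M,\rho)$, namely that being outside $\text{Comp}(M,\rho)$ gives the strict inequality $\|v-y\|_2 > \rho$ for every $M$-sparse $y$, which is what allows us to apply it to $y = P_S(v)$ even though $P_S(v)$ may live only in $\text{Sparse}(M)$ rather than $\text{Sparse}(M-1)$.
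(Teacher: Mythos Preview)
Your proof is correct and is essentially the same as the paper's: you partition coordinates by the two thresholds, use $|S|\le M$ together with incompressibility to get $\|P_{S^c}(v)\|_2^2>\rho^2$, subtract the trivial bound $\|P_T(v)\|_2^2<\rho^2/2$, and then use the upper cutoff $|v_k|\le 1/\sqrt{M}$ on $\sigma(v)$ to extract the cardinality bound. The only cosmetic difference is notation (the paper works with $\sigma_1,\sigma_2$ instead of your $S,T$).
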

\begin{proof}
Define $\sigma_1(v) := \{k : |v_k| \leq \frac{1}{\sqrt{M}} \}$.  Since $v$ is a unit vector, $|\sigma_1^c| \leq M$.  As $y  = P_{\sigma_1^c} v$ is a sparse vector with support at most $M$, the definition of incompressible vectors implies $\|v-y\|_2 > \rho$ or $\|P_{\sigma_1}(v)\|_2^2 \geq \rho^2$.  Define the following set to capture the lower bound.
$$
\sigma_2(v) := \left \{k : |v_k| \geq \frac{\rho}{\sqrt{2n}} \right \}.
$$
Clearly, $\|P_{\sigma_2}(v) \|_2^2 \leq \rho^2/2.$  Therefore,
$$
\|P_\sigma(v)\|_2^2 \geq \|P_{\sigma_1}(v) \|_2^2 - \|P_{\sigma_2^c} (v) \|_2^2 \geq \rho^2 /2.
$$
By the upperbound on the coordinates in $\sigma$, 
$$
\frac{\rho^2}{2} \leq \|P_\sigma (v)\|_2^2 \leq \frac{|\sigma|}{M}.
$$
Rearranging this inequality finishes the proof.
\end{proof}
 
For every $v \in \text{Incomp}(M, \rho)$, we fix a set $\sigma(v)$ of size exactly $\lceil M \rho^2/2 \rceil$.
Let $\tau'(v)$ be the index set of the $M$ coordinates with largest magnitude.  If this set is not uniquely defined, choose one arbitrarily.  Let $\tau(v) := \tau'(v) \setminus \sigma(v)$ and $\bar{\sigma} := [n] \setminus (\tau \cup \sigma)$.  Now we divide $[n] \setminus \tau$ into disjoint sets $I_1, I_2, \dots, I_{k_0} \text{ and } J$, with $|I_k| = \lceil \alpha n\rceil \leq M$ for $1 \leq k \leq k_0$ and $|J| \leq \lceil \alpha n \rceil$ where $\alpha = o(1)$ is a parameter to be chosen later.  For $1 \leq k \leq k_0,$ we let 
$$
I_k   := \sigma(v)_{[(k-1) \left \lceil \frac{M \rho^2}{2k_0} \right \rceil +1 : k \left \lceil  \frac{M \rho^2}{2k_0} \right \rceil]} \cup \bar{\sigma}(v)_{[(k-1)\lceil |\bar{\sigma}|/k_0 \rceil +1: k \lceil |\bar{\sigma}|/k_0 \rceil ]}.
$$
Finally, let $I_0 := J \cup \tau$ so $|I_0| \leq 2M$ by our assumption on $\lceil \alpha n \rceil$.  In words, $I_0$ is the index set of the large coordinates and the leftover smaller coordinates.
Additionally, we have
$$
 \frac{1}{2 \alpha} \leq \frac{n - |\tau|}{\lceil \alpha n \rceil} - 1\leq k_0 \leq \frac{n - |\tau|}{\lceil \alpha n \rceil} \leq \frac{1}{\alpha}.
$$

The purpose of this construction is to have substantial control over the $\ell_2$ norm and the $\ell_{\infty}$ norm of each $v_{I_k}$ for $1 \leq k \leq k_0$.  In particular, we have
\begin{equation} \label{eq:boundsvI}
\|v_{I_k} \|_2 \geq \sqrt{\frac{M \rho^2 \alpha}{8} \frac{\rho^2}{2n}} = \frac{\rho^2}{4}\sqrt{\frac{M \alpha }{n}} := \rho' .
\end{equation}
Furthermore,
$$
\|v_{I_k} \|_ \infty \leq \frac{1}{\sqrt{M}} \text{ and } \|v_{I_k} \|_2 \leq 2 \sqrt{\frac{\alpha n}{M}}.
$$

The $I_k$'s are filled by drawing sequentially from $\sigma$ and $\bar{\sigma}$ so that the entire partition is determined by $\tau'$ and $\sigma$.  Thus, there are at most $\binom{n}{M} \binom{n}{M \rho^2/2}$ partitions for all the vectors in $\text{Incomp}(M,\rho)$.
\subsection{Small-Ball Probability}
Recall from the proof strategy in Section \ref{sec:proofstrategy} that we have reduced the problem to bounding the probability that an eigenvector of a random matrix is orthogonal to a random vector.  As we will use various epsilon-net approximations, we need a more quantitative version of orthogonality.  In particular, we need to bound the probability that the dot product of the eigenvector and the random vector are small.  This leads naturally to the notion of small-ball probability.    
 
\begin{definition}
	The \emph{L\'evy concentration} of a random vector $Z \in \mathbb{R}^n$ is defined to be
	$$
	\mathcal{L}(Z, \eps) = \sup_{u \in \mathbb{R}^n} \P(\|Z- u\|_2 \leq \eps)
	$$
\end{definition}
Intuitively, the structure of a vector, $v$, will highly affect the L\'evy concentration of the random variable $v \cdot X$ where $X$ is a random vector.  To formalize this concept, we begin with a measure for the arithmetic structure of an entire unit vector.  

\begin{definition}\label{def:LCD}
We define the least common denominator (LCD) of $x \in \mathcal{S}^{n-1}$ as
$$
D(x) = \inf\{\theta > 0 : dist(\theta x, \Z^n) < (\delta_0 p)^{-1/2}  \sqrt{\log_{+}(\sqrt{\delta_0 p}\theta)}\}
$$
where $\delta_0$ is an appropriate constant (see Remark \ref{remark:delta} below).  
This particular form of the LCD was first used in \cite{VershyninInvertibility}.
\end{definition}

\begin{remark}\label{remark:delta}
	There exists constants $\delta_0, \bar{\eps}_0 \in (0,1)$ such that for any $\eps \leq \bar{\eps}_0$, $\mathcal{L}(\delta \xi, \eps) \leq 1-\delta_0 p$ where $\P(\delta = 1) = p$ and $\xi$ is a subgaussian random variable with unit variance.  We fix such a $\delta_0$ in Definition \ref{def:LCD}.
\end{remark}

The quantitative relationship between the arithmetic structure of a vector and small ball probability is captured in the following proposition.  

\begin{proposition}[Proposition 4.2, \cite{BasakRudelsonInvertibility}] \label{prop:smallballprobability}
	Let $X \in \mathbb{R}^n$ be a random vector with i.i.d. coordinates of the form $\xi_j \delta_j$, where $\P(\delta_j = 1) = p$ and $\xi_j$'s are random variables with unit variance and finite fourth moment.  Then for any $v \in \mathcal{S}^{n-1}$, 
	$$
	\mathcal{L} \left( X \cdot v, \sqrt{p} \eps \right) \leq C_{\ref{prop:smallballprobability}} \left( \eps + \frac{1}{\sqrt{p} D(v)} \right)
	$$
	where $C_{\ref{prop:smallballprobability}}$ depends only on the fourth moment of $\xi$.
\end{proposition}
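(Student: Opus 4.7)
The plan is to follow the characteristic-function approach standard in modern anti-concentration arguments, as developed in \cite{RudelsonVershyninLO, VershyninInvertibility, BasakRudelsonInvertibility}.

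First I would apply Esseen's inequality to reduce the L\'evy concentration of $S := X \cdot v$ to an integral of its characteristic function $\phi_S$:
$$
\mathcal{L}(S, \sqrt{p}\,\eps) \leq C \sqrt{p}\,\eps \int_{-1/(\sqrt{p}\,\eps)}^{1/(\sqrt{p}\,\eps)} |\phi_S(t)|\, dt.
$$
Because the coordinates of $X$ are i.i.d., $\phi_S(t) = \prod_{j=1}^n \phi_{\delta \xi}(t v_j)$, and the task becomes a sharp upper bound on this product.

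Next, via the symmetrization identity $|\phi_{\delta\xi}(s)|^2 = \E \cos\bigl(s(\delta\xi - \delta'\xi')\bigr)$ (with $\delta',\xi'$ independent copies of $\delta,\xi$), I would split the symmetrized random variable $\delta\xi - \delta'\xi'$ according to the four possible values of $(\delta,\delta')$. The events $\{\delta=1,\delta'=0\}$ and $\{\delta=0,\delta'=1\}$ contribute jointly with probability $2p(1-p)$, which is the source of the crucial factor of $p$. Using the elementary inequality $1 - \cos(2\pi y) \geq 8\|y\|_\Z^2$, where $\|\cdot\|_\Z$ is distance to the nearest integer, and combining with the anti-concentration input of Remark \ref{remark:delta}, one obtains an exponential bound of the form
$$
|\phi_S(t)|^2 \leq \exp\!\left(-c\, p\, \E_\xi\, \dist^2\!\bigl(\tfrac{t}{2\pi}\, v \odot \xi,\ \Z^n\bigr)\right),
$$
where $v \odot \xi$ denotes the coordinatewise product of $v$ with the random vector $\xi = (\xi_1,\dots,\xi_n)$.

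For the third step I would substitute $\theta = t/(2\pi)$ and split the integration interval at $|\theta| = D(v)$. On $|\theta| \geq D(v)$, the trivial bound $|\phi_S| \leq 1$ yields an outer contribution of order $1/(\sqrt{p}\, D(v))$, producing the second term in the claimed estimate. On $|\theta| < D(v)$, the definition of the LCD guarantees $\dist(\theta v, \Z^n) \geq (\delta_0 p)^{-1/2}\sqrt{\log_+(\sqrt{\delta_0 p}\,\theta)}$; using unit variance and the finite fourth moment of $\xi$ (e.g., a Paley--Zygmund bound on $\P(|\xi| \geq 1/2)$), this lower bound transfers from $\theta v$ to $\theta (v \odot \xi)$ up to a constant factor. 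The resulting Gaussian-type tail of the integrand then integrates to a contribution of order $\eps$.

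The main obstacle is step two: matching the $p$-scaling of the exponent, the $p$-dependent threshold inside the LCD, and the $\log_+$ weight so that the window $|\theta| < D(v)$ contributes exactly the $\eps$ term rather than something larger. The related technicality in step three is the transfer of the distance bound from $v$ to $v \odot \xi$, which must survive the averaging over $\xi$ without losing the logarithmic factor built into the LCD. Both issues are handled carefully in \cite{BasakRudelsonInvertibility} and follow the template of \cite{VershyninInvertibility}.
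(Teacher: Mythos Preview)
The paper does not prove this proposition at all: it is quoted verbatim as Proposition~4.2 of \cite{BasakRudelsonInvertibility} and used as a black box. Your outline is essentially the characteristic-function/Esseen argument that appears in that reference (and in \cite{RudelsonVershyninLO,VershyninInvertibility}), so there is nothing to compare against in the present paper.

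One small correction to your sketch: the ``split at $|\theta|=D(v)$'' does not literally produce an outer contribution of order $1/(\sqrt{p}\,D(v))$ via the trivial bound $|\phi_S|\le 1$; on the range $D(v)\le|\theta|\le 1/(2\pi\sqrt{p}\,\eps)$ that bound gives something of order $1$, not $1/(\sqrt{p}\,D(v))$. The standard way around this is to observe that $\mathcal{L}(S,\sqrt{p}\,\eps)$ is nondecreasing in $\eps$, replace $\eps$ by $\eps' := \max\bigl(\eps,\,1/(\sqrt{p}\,D(v))\bigr)$ at the outset, and then note that the entire Esseen window $|\theta|\le 1/(2\pi\sqrt{p}\,\eps')\le D(v)$ lies in the regime where the LCD lower bound applies. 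This yields $\mathcal{L}(S,\sqrt{p}\,\eps')\le C\eps'$, which is exactly the claimed inequality. With that adjustment your plan is correct.
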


Following \cite{VershyninInvertibility}, we introduce a tool that can reveal the arithmetic structure in small segments of the vector $x$.

\begin{definition}[Regularized LCD]					\label{def reg LCD}
  Let $\alpha \in (0, 1)$. 
  We define the {\em regularized LCD} of a vector $v \in \Incomp(M,\rho)$ as
  $$
  \Dhat(v,\alpha) = \max_{1 \leq j \leq k_0}  D \big(x_{I_j}/\|x_{I_j}\|_2\big).
  $$
\end{definition}

Combining Proposition \ref{prop:smallballprobability} with the by now standard tensorization argument (see \cite{RudelsonVershyninLO}), yields a bound on the L\'evy concentration of $M_n x$.

\begin{proposition}[Small ball probabilities of $M_n x$ via regularized LCD] \label{prop:smallballprob}
There exists a constant $C_{\ref{prop:smallballprob}}$ such that for all $\eps \geq 0$, and $I$ is an index set of size $\lceil \alpha n \rceil$, 
$$
  \LL(M_n x, \eps \|v_I\|_2 \sqrt{p m}) \leq C_{\ref{prop:smallballprob}}^{n- \lceil \alpha n \rceil} \left( \eps + \frac{1}{\sqrt p D(v_I/\|v_I\|_2)} \right)^{n- \lceil \alpha n \rceil}
$$
\end{proposition}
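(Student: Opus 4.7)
The plan is to reduce $\LL(M_n v, \cdot)$ to the Lévy concentration of $m := n - \lceil \alpha n \rceil$ independent single-row inner products, apply Proposition \ref{prop:smallballprobability} to each, and then tensorize. (Here I read the ``$x$'' of the statement as the vector $v$ defining the partition, and the ``$m$'' inside $\sqrt{pm}$ as $n - \lceil \alpha n \rceil$.)

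Passing to a subset of coordinates only enlarges the concentration function, so it suffices to bound $\LL\bigl(((M_n v)_i)_{i \notin I},\,\eps\|v_I\|_2\sqrt{pm}\bigr)$. For each $i \notin I$ split
$$
(M_n v)_i \;=\; \sum_{j \in I} M_{ij} v_j \;+\; \sum_{j \notin I} M_{ij} v_j \;=:\; Y_i + Z_i,
$$
and condition on the $\sigma$-algebra $\mathcal{F}$ generated by $\{M_{ab}: a, b \notin I\}$. Each $Z_i$ is $\mathcal{F}$-measurable. The remaining entries $\{M_{ij}: i \notin I,\,j \in I\}$ form a genuine iid family: because $I$ and $I^c$ are disjoint, no $(i,j)$ pair coincides with its transpose $(j,i)$, so the symmetry $M_{ij} = M_{ji}$ does not collapse any two of these entries. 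In particular, the $Y_i$ for $i \notin I$ depend on disjoint sets of these entries and are conditionally independent given $\mathcal{F}$.

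Writing $Y_i = \|v_I\|_2 \langle X^{(i)}, v_I/\|v_I\|_2\rangle$ with $X^{(i)} = (M_{ij})_{j \in I}$ iid of the form $\delta\xi$, Proposition \ref{prop:smallballprobability} yields, uniformly over the conditioning,
$$
\LL\bigl(Y_i,\,\sqrt{p}\,\eps\,\|v_I\|_2\bigr) \;\leq\; C_{\ref{prop:smallballprobability}}\!\left(\eps + \frac{1}{\sqrt{p}\,D(v_I/\|v_I\|_2)}\right).
$$
The standard tensorization lemma (as used in \cite{RudelsonVershyninLO}) applied to the $m$ independent variables $(Y_i)_{i \notin I}$ upgrades this to
$$
\LL\bigl((Y_i)_{i \notin I},\,\eps\|v_I\|_2\sqrt{pm}\bigr) \;\leq\; \left(C\bigl(\eps + 1/(\sqrt{p}\,D(v_I/\|v_I\|_2))\bigr)\right)^{m}.
$$
Since $(M_n v)_i - Y_i = Z_i$ is deterministic under $\mathcal{F}$, the same bound holds for $((M_n v)_i)_{i \notin I}$; averaging over $\mathcal{F}$ removes the conditioning and delivers the claim.

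The one step that requires care is the independence check in the second paragraph: the symmetry of $M_n$ forbids treating the rows as independent in general, and a careless choice of what to condition on would either destroy independence across rows or break the iid structure of the vectors $X^{(i)}$. Segregating the row index to $I^c$ and the column index to $I$ (and conditioning on precisely $\{M_{ab}: a,b \notin I\}$) is what makes all three properties—$\mathcal{F}$-measurability of $Z_i$, cross-row independence of $Y_i$, and within-row iid structure of $X^{(i)}$—hold simultaneously. Everything after this bookkeeping is a direct combination of Proposition \ref{prop:smallballprobability} with tensorization.
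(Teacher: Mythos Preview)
Your proof is correct and matches the paper's approach exactly: the paper simply states that the result follows by combining Proposition~\ref{prop:smallballprobability} with the standard tensorization argument from \cite{RudelsonVershyninLO}, and you have spelled out precisely those details. Your careful handling of the symmetry of $M_n$---conditioning on $\{M_{ab}:a,b\notin I\}$ so that the off-block entries $\{M_{ij}:i\notin I,\,j\in I\}$ remain genuinely iid across rows---is the key bookkeeping the paper leaves implicit.
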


Therefore, by the bounds in (\ref{eq:boundsvI}) and the above proposition, we have
$$
\LL(M_n x, \eps \rho' \sqrt{p n}) \leq  \left( C_{\ref{prop:smallballprob}} \eps + \frac{C_{\ref{prop:smallballprob}}}{\sqrt p \hat{D}(v, \alpha)} \right)^{n - \alpha n} .
$$

We also have the following simple lower bound for the LCD.
\begin{proposition}[Lemma 6.2, \cite{VershyninInvertibility}] \label{prop:LCDlowerbound}
Let $x \in \mathcal{S}^{n-1}$.  Then 
$$
D(x) \geq \frac{1}{2 \|x\|_\infty}.
$$
\end{proposition}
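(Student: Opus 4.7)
The plan is to show that for any $\theta$ strictly less than $1/(2\|x\|_\infty)$, the condition defining the infimum in $D(x)$ cannot be satisfied, forcing $D(x) \geq 1/(2\|x\|_\infty)$.

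First I would observe that for such a $\theta$, every coordinate satisfies $|\theta x_i| \leq \theta \|x\|_\infty < 1/2$, so the nearest integer to each $\theta x_i$ is $0$. Consequently the distance to the integer lattice is computed by the full vector itself:
\[
\dist(\theta x, \Z^n)^2 = \sum_{i=1}^n (\theta x_i)^2 = \theta^2 \|x\|_2^2 = \theta^2,
\]
using $x \in \mathcal{S}^{n-1}$. So $\dist(\theta x, \Z^n) = \theta$.

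Next, I would substitute this into the defining inequality $\dist(\theta x, \Z^n) < (\delta_0 p)^{-1/2} \sqrt{\log_+(\sqrt{\delta_0 p}\theta)}$, square both sides, and let $u = \sqrt{\delta_0 p}\,\theta$. The inequality becomes equivalent to $u^2 < \log_+ u$. It therefore suffices to verify the elementary fact that $u^2 \geq \log_+ u$ for every $u > 0$: when $u \leq 1$ the right side is $0$ and the inequality is immediate, while for $u > 1$ one checks that $f(u) := u^2 - \log u$ has $f'(u) = 2u - 1/u$, so $f$ attains its minimum on $(0,\infty)$ at $u = 1/\sqrt{2}$ with value $\tfrac{1}{2} + \tfrac{1}{2}\log 2 > 0$. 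Hence the defining inequality fails for every $\theta < 1/(2\|x\|_\infty)$, which gives the claimed lower bound.

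There is no real obstacle here: the proof is essentially a two-line calculation once one notices that the sub-$1/2$ coordinate condition eliminates the integer rounding, reducing the lattice distance to $\theta$ itself and collapsing the whole condition to the universal scalar inequality $u^2 \geq \log_+ u$.
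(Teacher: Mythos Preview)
Your argument is correct. The paper does not supply its own proof of this proposition; it simply quotes the result from \cite{VershyninInvertibility}, so there is nothing to compare against, and your two-step computation (nearest lattice point is the origin when $\theta\|x\|_\infty<1/2$, then the scalar inequality $u^2\ge\log_+u$) is exactly the standard justification.
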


We can deduce from this proposition and our bounds in (\ref{eq:boundsvI}), that
\begin{equation} \label{eq:LCDlowerbound}
\hat{D}(v, \alpha) \geq \frac{1}{2} \rho' \sqrt{M}.
\end{equation}

For the remainder of this section, we fix several parameters.  For the readers' convenience, we have aggregated and highlighted several important variables below.  Although we repeat these definitions, we urge the reader to refer to this section 
when verifying calculations later.   

\begin{center}
\boxed{
$$
M = \frac{n}{(np)^{1/16}}, \quad \alpha = (np)^{-1/16},  \quad \rho' = \frac{\rho^2}{4}\sqrt{\frac{M \alpha}{n}}
$$
}
\end{center}

\begin{remark}
	Due to the assumption that $p \geq n^{-1+\delta}$, we have that 
	$$
	c_\delta \leq \rho \leq c'_\delta.
	$$
	for two constants $c_{\delta}, c'_{\delta}$ only depending on $\delta$.  We will often implicitly make use of the fact that $np \rightarrow \infty$.
\end{remark}

\subsection{Vectors with Mid-Range and Small LCD}
In this section, we show that matrices of the form $M_n - \lambda$ are unlikely to have vectors in their nullspace with mid-range or small LCD.

\subsubsection{Mid-range LCD: $\frac{1}{\bar{c}} \frac{n^{1/2}}{(pn)^{1/32}} \leq \hat{D} \leq \exp((np)^{1/32})$} 
\begin{proposition}[Mid-range LCD]\label{prop:lowLCD}
For $\delta > 0$, $p \geq n^{-1 + \delta}$ and $\lambda \in [-K \sqrt{pn}, K \sqrt{pn}]$.  There exist constants  $$c_{\ref{prop:lowLCD}},  c''_{\ref{prop:lowLCD}},  \tilde{c}_{\ref{prop:lowLCD}}>0$$  such that for $M =  \frac{n}{(np)^{1/16}}$, 
$$
\P \left(\exists v \in \hat{S}_D \text{ s.t. } \|(M_n - \lambda) v\|_2 \leq \tilde{c}_{\ref{prop:lowLCD}} \eps_0 (pn)^{7/16}   \right) \leq \exp(- c''_{\ref{prop:lowLCD}} n)
$$
where $\frac{1}{\bar{c}} \frac{n^{1/2}}{(pn)^{1/32}} \leq D \leq \exp((np)^{1/32})$, $\eps_0 = c_{\ref{prop:lowLCD}}  \frac{n^{1/2}}{(np)^{1/32} D}$ and 
$$
\hat{S}_D := \{v \in \text{Incomp}(M, \rho): D \leq \Dhat(v) \leq 2D \}.
$$
Recall that  $\rho := (C K)^{-\ell_0 - 6}$, $\ell_0 := \left \lceil \frac{\log 1/(8p)}{\log \sqrt{pn}} \right \rceil$. 
\end{proposition}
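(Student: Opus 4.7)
The plan is to prove this via the standard net $+$ small-ball $+$ union bound scheme of Rudelson--Vershynin and Vershynin, adapted to the partition structure introduced in Section \ref{sec:incompressible}. Fix an arbitrary $v \in \hat{S}_D$. On the event of the proposition, $\|(M_n - \lambda)v\|_2$ is tiny; I want to replace $v$ by an element of a net and $\lambda$ by an element of a one-dimensional net, apply Proposition \ref{prop:smallballprob} pointwise, and then union-bound over the nets. The mesh of the net must be fine enough that the operator norm bound $\|M_n - \lambda\| \leq 2K\sqrt{pn}$ (Proposition \ref{prop:opnorm}) lets one pass from $v$ to the net point with error absorbed into $\tilde{c}_{\ref{prop:lowLCD}}\eps_0(pn)^{7/16}$.

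The net for $\hat{S}_D$ is built in three pieces. First, each $v \in \hat{S}_D$ determines its partition $(I_0, I_1, \ldots, I_{k_0})$ through $\tau'(v)$ and $\sigma(v)$; there are at most $\binom{n}{M}\binom{n}{\lceil M\rho^2/2\rceil} \leq \exp(O(n (np)^{-1/16} \log n))$ such partitions, which is $\exp(o(n))$ since $p \geq n^{-1+\delta}$. Second, for each $v$ there is some $j^\ast \in [k_0]$ with $D(v_{I_{j^\ast}}/\|v_{I_{j^\ast}}\|_2) \in [D, 2D]$; sum over the $k_0 \leq 1/\alpha$ choices. Third, on the block $I_{j^\ast}$ I use the LCD-based approximation: any unit vector $w$ with $D(w) \leq 2D$ is within distance $O(1/D)$ of some $p/\theta$ with $\theta \in (0,2D]$ and $p \in \Z^{|I_{j^\ast}|}$ of norm $\lesssim D$, giving an LCD-net of size $(CD/\sqrt{|I_{j^\ast}|})^{|I_{j^\ast}|}$; on $[n]\setminus I_{j^\ast}$, a standard $\eta$-mesh of size $(C'/\eta)^{n - |I_{j^\ast}|}$ suffices since that block's dimension is $n - \lceil \alpha n\rceil$. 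The $\lambda$-net of $[-K\sqrt{pn}, K\sqrt{pn}]$ is of polynomial size times $D$, which is still $\exp(o(n))$ using $D \leq \exp((np)^{1/32})$.

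For a fixed net point $\tilde v$ and $\lambda_0$, Proposition \ref{prop:smallballprob} and the lower bound $\hat D(\tilde v, \alpha) \geq D$ give
\[
  \P\bigl(\|(M_n - \lambda_0)\tilde v\|_2 \leq \tilde{c}_{\ref{prop:lowLCD}} \eps_0 \rho' \sqrt{pn}\bigr) \leq \Bigl(C \eps_0 + \tfrac{C}{\sqrt{p}D}\Bigr)^{n(1-\alpha)}.
\]
The choice $\eps_0 = c_{\ref{prop:lowLCD}} \sqrt{n}/((np)^{1/32} D)$ is engineered so the first term dominates (since $(np)^{-1/32} \gg (pn)^{-1/2}$), bounding the right side by $(C'\sqrt{n}/((np)^{1/32} D))^{n(1-\alpha)}$. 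Multiplying this by the dominant net factor $(CD/\sqrt{n})^n$ gives
\[
  \exp(O(n)) \cdot D^{\alpha n} \cdot (np)^{-n(1-\alpha)/32}.
\]
Here the $D$ factors essentially cancel: by $\alpha = (np)^{-1/16}$ and $D \leq \exp((np)^{1/32})$, one has $D^{\alpha n} \leq \exp(n(np)^{-1/32}) = \exp(o(n))$, while $(np)^{-n(1-\alpha)/32} \leq \exp(-n \log(np)/64) \leq \exp(-\delta n\log n/64)$ for $p \geq n^{-1+\delta}$. The latter dominates all the $\exp(o(n))$ combinatorial and net-size factors, producing the desired $\exp(-c''_{\ref{prop:lowLCD}} n)$ bound.

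The main obstacle, and the reason the hypotheses have the shape they do, is the delicate balancing of $D$ powers between the LCD-net (which scales like $D^n$) and the small-ball estimate (which contributes $D^{-n(1-\alpha)}$). The imbalance leaves a residual $D^{\alpha n}$ that must be controlled by the upper cutoff $D \leq \exp((np)^{1/32})$ in the definition of $\hat S_D$, and this forces the specific matching between $\alpha = (np)^{-1/16}$, $\eps_0$, and the LCD range. Once this balance is set, everything else is a bookkeeping union bound and the operator-norm approximation step for passing from the net to all of $\hat{S}_D$.
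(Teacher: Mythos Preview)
Your proposal has a genuine gap in the net construction. You describe using the LCD-based net only on the single block $I_{j^\ast}$ that achieves $\hat D(v,\alpha)$, together with a standard $\eta$-mesh on the complement $[n]\setminus I_{j^\ast}$. This does not balance. For the operator-norm approximation step you need $\eta\lesssim \eps_0\rho'/K$ (the threshold is $\tilde c\,\eps_0\rho'\sqrt{pn}$ and $\|M_n-\lambda\|\le 2K\sqrt{pn}$). With that mesh the complement net has size $(C'/(\eps_0\rho'))^{n(1-\alpha)}$; multiplying by the small-ball bound $(C\eps_0)^{n(1-\alpha)}$ leaves $(C''/\rho')^{n(1-\alpha)}\asymp (np)^{n(1-\alpha)/16}$, which blows up rather than decays. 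The ``dominant net factor $(CD/\sqrt n)^n$'' you later invoke simply does not arise from the construction you describe.

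The point of the regularized LCD is precisely that $\hat D(v,\alpha)=\max_k D(v_{I_k}/\|v_{I_k}\|_2)\le 2D$ forces \emph{every} block $I_k$ to have LCD at most $2D$, so the LCD-net of Lemma~\ref{lemma:netsoflcd} can be applied to \emph{each} $I_k$, $1\le k\le k_0$. The paper assembles the full net as a product: a trivial $c\eps_0\rho'/(10K)$-net on $I_0$, an LCD-net of size $(12+\bar c D/\sqrt{\alpha n})^{\alpha n}\log D$ on each $I_k$, and a scalar net for each $\|v_{I_k}\|_2$. The total size is then essentially $(2\bar c D/\sqrt{\alpha n})^{n}$, not $(CD/\sqrt n)^n$. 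After multiplying by the small-ball factor $(c_{\ref{prop:lowLCD}}\sqrt{\alpha n}/D)^{\,n-\lceil\alpha n\rceil-2M-\alpha^{-1}}$, the $D/\sqrt{\alpha n}$ dependence cancels except for a residual $(D/\sqrt{\alpha n})^{\alpha n+2M+\alpha^{-1}}\le\exp\bigl(n(np)^{-1/32}\bigr)=\exp(o(n))$ coming from the upper cutoff on $D$. The exponential decay then comes from choosing the free constant $c_{\ref{prop:lowLCD}}$ small relative to the universal LCD-net constant $\bar c$ (this is exactly the content of the Remark following the proof), not from a $(np)^{-cn}$ factor as you claim; in fact your asserted decay $(np)^{-n(1-\alpha)/32}$ is exactly cancelled by the extra $(np)^{n/32}$ hidden in $\sqrt{\alpha n}$ versus $\sqrt n$.
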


\subsubsection{Level Sets for the Usual LCD}
We first construct level nets of the LCD (not regularized) for vectors of length $\lceil \alpha n \rceil$.  We drop the ceiling function when such precision is not crucial.  We keep the $n$ dependence in this section as various parameters, e.g. $p(n)$, more conveniently depend on $n$ rather than $\alpha n$.
\begin{lemma} \label{lemma:LCDlevelsetnet}
For $ p \geq n^{-1 + \delta}$,
$$
\beta = \frac{2 \sqrt{\log( 2 \sqrt{\delta_0 p} D_0)}}{D_0 \sqrt{\delta_0 p}}
$$
and $D_0 > 0$, the set $\{v \in \mathcal{S}^{\alpha n-1}: D(v) \in (D_0, 2D_0]\}$ has a $\beta$-net, $\mathcal{N}$, such that 
$$|\mathcal{N}| \leq \left( 2 + \frac{\bar{c} D_0}{\sqrt{\alpha n}} \right)^{\alpha n}  $$
for a universal constant $\bar{c}$.
\end{lemma}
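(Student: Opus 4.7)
The plan is to build the net by applying the defining property of the LCD to pass from $v$ to an integer vector, and then to count integer vectors in a ball of the appropriate radius. Throughout I may assume $D_0 \geq 1/\sqrt{\delta_0 p}$ (otherwise the level set is empty, because $\log_+(\sqrt{\delta_0 p}\theta)$ vanishes and the infimum in Definition~\ref{def:LCD} is never attained at such small $\theta$), and I may assume $\beta \leq 1$ (otherwise the statement is trivial and the whole unit sphere is a net).

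First, for each $v \in \mathcal{S}^{\alpha n-1}$ with $D(v) \in (D_0, 2D_0]$, I would select $\theta_v \in (D_0, 2D_0]$ with $\dist(\theta_v v, \Z^{\alpha n}) < (\delta_0 p)^{-1/2}\sqrt{\log_+(\sqrt{\delta_0 p}\theta_v)}$ and let $q_v \in \Z^{\alpha n}$ be a nearest lattice point, so
\[
\|q_v - \theta_v v\|_2 \;<\; r \;:=\; (\delta_0 p)^{-1/2}\sqrt{\log(2\sqrt{\delta_0 p}\,D_0)},
\]
using the monotonicity of $\log_+$ and $\theta_v \leq 2D_0$. Note that $r/D_0 = \beta/2$.

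Next I would take the net to be $\mathcal{N} := \{q/\|q\|_2 : q \in \Z^{\alpha n},\ D_0 - r \leq \|q\|_2 \leq 2D_0 + r\}$ (projecting integer vectors back onto the sphere). To show $q_v/\|q_v\|_2$ approximates $v$ to within $\beta$, I would use the triangle inequality
\[
\bigl\|v - q_v/\|q_v\|_2\bigr\|_2 \;\leq\; \|v - q_v/\theta_v\|_2 \;+\; \bigl\|q_v/\theta_v - q_v/\|q_v\|_2\bigr\|_2.
\]
The first term is $\|q_v - \theta_v v\|_2/\theta_v \leq r/D_0 = \beta/2$, and the second equals $\bigl|\,\|q_v\|_2 - \theta_v\,\bigr|/\theta_v \leq r/D_0 = \beta/2$ (since $q_v$ is within distance $r$ of the vector $\theta_v v$ of norm $\theta_v$). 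Adding gives $\beta$, as required.

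Finally I would bound $|\mathcal{N}|$ by the number of lattice points in a Euclidean ball. Since $\beta \leq 1$ forces $r \leq D_0/2$, every $q$ contributing to the net satisfies $\|q\|_2 \leq 2D_0 + r \leq 3D_0$. Associating each $q \in \Z^{\alpha n}$ with the unit cube $q + [-\tfrac12,\tfrac12]^{\alpha n}$, disjoint cubes all lie in the ball of radius $3D_0 + \sqrt{\alpha n}/2$, so a volume comparison and Stirling give
\[
|\mathcal{N}| \;\leq\; \mathrm{vol}\bigl(B(0,\,3D_0 + \sqrt{\alpha n}/2)\bigr) \;\leq\; \Bigl(\sqrt{2\pi e}\,\bigl(3D_0/\sqrt{\alpha n} + \tfrac12\bigr)\Bigr)^{\alpha n} \;\leq\; \Bigl(2 + \tfrac{\bar c D_0}{\sqrt{\alpha n}}\Bigr)^{\alpha n}
\]
for a universal $\bar c$. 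The only real obstacle is keeping the constant in front of $D_0/\sqrt{\alpha n}$ universal and the additive term bounded by $2$; both are handled by the lattice-cube volume argument with room to spare. The assumption $p \geq n^{-1+\delta}$ plays no essential role in the argument itself, it merely ensures that $\beta$ is genuinely small and the net is non-trivial in the regimes of interest.
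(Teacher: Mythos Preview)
Your proof is correct and follows essentially the same route as the paper: use the defining inequality of the LCD at some $\theta\in(D_0,2D_0]$ to locate a nearby lattice point, normalize it to the sphere, bound the approximation error by two applications of the triangle inequality, and count lattice points in a ball of radius $O(D_0)$ by the standard volume argument. The only cosmetic difference is that you dispose of the case $r>D_0/2$ by assuming $\beta\le 1$ up front, whereas the paper invokes the lower bound $D_0\ge \rho'\sqrt{M}$ (hence $D_0\sqrt{p}\gg 1$) coming from Proposition~\ref{prop:LCDlowerbound} to force $r\le 2D_0$.
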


\begin{proof}

For a $v$ with $D(v) \in (D_0, 2D_0]$, by the definition of LCD, there exists a $\theta \in (D_0, 2D_0]$ and $z \in Z$ such that 
$$
\| \theta v - z \|_2 < \frac{\sqrt{\log( \sqrt{\delta_0 p} \theta)}}{\sqrt{\delta_0 p}}
$$  
which implies that
$$
\left \|v - \frac{z}{\theta} \right \|_2 < \frac{\sqrt{\log( 2 \sqrt{\delta_0 p} D_0)}}{D_0 \sqrt{\delta_0 p}}.
$$
We also have
$$
\Bigg \| \left \|\frac{z}{\|z\|_2} \right \|_2 - \frac{\|z\|_2}{\theta} \Bigg \|_2 = \left \|  \|v\|_2  - \frac{\|z\|_2}{\theta} \right \|_2  < \left \| v - \frac{z}{\theta} \right \|_2.
$$
Combining the above estimates gives
\begin{align*}
\left \|v - \frac{z}{\|z\|_2} \right \|_2 &\leq \left\| v - \frac{z}{\theta} \right \|_2 + \left \| \frac{z}{\theta} - \frac{z}{\|z\|_2} \right \|_2 \\
&\leq \left\| v - \frac{z}{\theta} \right \|_2 + \Bigg \| \left \|\frac{z}{\|z\|_2} \right \|_2 - \frac{\|z\|_2}{\theta} \Bigg \|_2 \\
&\leq \frac{2 \sqrt{\log( \sqrt{\delta_0 p} \theta)}}{D_0 \sqrt{\delta_0 p}}.
\end{align*}

Note that 
$$
\|z\|_2 \leq \|z - \theta v\|_2 + \| \theta v\|_2 \leq  \frac{\sqrt{\log( \sqrt{\delta_0 p} \theta)}}{\sqrt{\delta_0 p}} + 2D_0 \leq 4 D_0.
$$
The last inequality follows from recalling that $D_0 \geq \rho' \sqrt{M} \geq \frac{c_{\delta}^2 \sqrt{n}}{4 (np)^{5/32}} $ so $D_0 \sqrt{p} \geq \frac{c_{\delta}^2}{4} (np)^{11/32} \geq \sqrt{\log (\sqrt{\delta_0 p} \theta)}$.
Let
$$
Z := \{z \in \mathbb{Z}^{m}: \text{supp}(z) \in I \text{ and } 0 < \|z\|_2 \leq 4D_0 \}.
$$
Define $\mathcal{N}:= \{z/ \|z\|_2: z \in Z \}$.  By the standard volumetric calculation,
$$
|\mathcal{N}| \leq \left( 2 + \frac{\bar{c} D_0}{\sqrt{m}} \right)^{m}
$$
for some universal constant $\bar{c}$. 
$\mathcal{N}$ serves as an appropriate net.  
\end{proof}

The above lemma can be modified so that $\beta$ is a function of $D$ rather than $D_0$.

\begin{lemma}
For 
$$
\beta = \frac{2 \sqrt{\log( 2 \sqrt{\delta_0 p} D)}}{D \sqrt{\delta_0 p}}
$$
and $D_0 > 0$, the set $\{v \in \mathcal{S}^{\alpha n-1}: D(v) \in (D_0, 2D_0]\}$ has a $\beta$-net, $\mathcal{N}$, such that 
$$|\mathcal{N}| \leq \left( 12 + \frac{\bar{c} D}{\sqrt{\alpha n}} \right)^{\alpha n}  $$
for a universal constant $\bar{c}$.
\end{lemma}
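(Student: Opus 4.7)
The plan is to reduce to Lemma~\ref{lemma:LCDlevelsetnet} by noting that the target $\beta$ and the $\beta_0$ appearing there differ by at most a bounded factor, and then to absorb that factor with a standard volumetric refinement.

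First I would invoke Lemma~\ref{lemma:LCDlevelsetnet} to produce a $\beta_0$-net $\mathcal{N}_0$ of $\{v \in \mathcal{S}^{\alpha n - 1} : D(v) \in (D_0, 2D_0]\}$ of cardinality at most $(2 + \bar{c}\, D_0/\sqrt{\alpha n})^{\alpha n}$, where
$$
\beta_0 := \frac{2\sqrt{\log(2\sqrt{\delta_0 p}\, D_0)}}{D_0 \sqrt{\delta_0 p}}.
$$
Next I would compare $\beta_0$ and the target $\beta$. Since $D \in (D_0, 2D_0]$, we have $D/D_0 \leq 2$, and since $\log$ is monotone, $\log(2\sqrt{\delta_0 p}\, D_0) \leq \log(2\sqrt{\delta_0 p}\, D)$. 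Combining,
$$
\frac{\beta_0}{\beta} \;=\; \frac{D}{D_0}\cdot \sqrt{\frac{\log(2\sqrt{\delta_0 p}\, D_0)}{\log(2\sqrt{\delta_0 p}\, D)}} \;\leq\; 2,
$$
so $\mathcal{N}_0$ is automatically a $2\beta$-net of the same level set.

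The remaining step is to upgrade $\mathcal{N}_0$ to a $\beta$-net. The intersection of $\mathcal{S}^{\alpha n - 1}$ with any ball of radius $2\beta$ admits a $\beta$-net of cardinality at most $5^{\alpha n}$, by the standard volumetric bound $(1 + 2\cdot(2\beta)/\beta)^{\alpha n}$ in $\mathbb{R}^{\alpha n}$. Replacing each point of $\mathcal{N}_0$ by such a local $\beta$-net yields a net $\mathcal{N}$ with
$$
|\mathcal{N}| \;\leq\; 5^{\alpha n} \cdot \left(2 + \bar{c}\, D_0/\sqrt{\alpha n}\right)^{\alpha n} \;\leq\; \left(10 + 5\bar{c}\, D/\sqrt{\alpha n}\right)^{\alpha n},
$$
where in the last step I used $D_0 \leq D$. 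Renaming $5\bar{c}$ back to $\bar{c}$ and enlarging the constant term from $10$ to $12$ gives the stated bound.

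No step here is genuinely hard. The only point requiring care is that the arguments of the square roots defining $\beta_0$ and $\beta$ lie above $1$ so that $\sqrt{\log(\cdot)}$ is real; this is automatic whenever $\sqrt{\delta_0 p}\, D_0 \gtrsim 1$, which is how Lemma~\ref{lemma:LCDlevelsetnet} is already applied in this paper under $D_0 \geq \rho'\sqrt{M}$ and $p \geq n^{-1+\delta}$.
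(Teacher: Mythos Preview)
Your argument has a genuine gap. You write ``Since $D \in (D_0, 2D_0]$, we have $D/D_0 \leq 2$,'' but the statement makes no such assumption: $D$ is the free parameter entering $\beta$, while $D_0$ parametrizes the level set, and the two are independent. You appear to have conflated the parameter $D$ with the LCD value $D(v)$, which does lie in $(D_0,2D_0]$. In fact the whole purpose of this lemma (as opposed to Lemma~\ref{lemma:LCDlevelsetnet}) is to decouple the scale of $\beta$ from the scale of the level set, so that in Lemma~\ref{lemma:netsoflcd} one may fix $D$ and apply the present lemma to every dyadic sublevel $D_0 = 2^{-k}D$, $k \geq 1$; there $D/D_0 = 2^k$ is unbounded. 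Once $D/D_0$ is not bounded, your estimate $\beta_0/\beta \leq 2$ is false --- under $D_0 \leq D$ one only gets $\beta_0/\beta \leq D/D_0$ --- and a constant-factor volumetric refinement such as $5^{\alpha n}$ cannot upgrade a $\beta_0$-net to a $\beta$-net.

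The paper's proof does not assume $D/D_0$ bounded. It splits on whether $\beta \geq \beta_0$ (then the $\beta_0$-net is already a $\beta$-net) or $\beta < \beta_0$, and in the latter case refines each $\beta_0$-ball by a $(\beta/2)$-net of size $(O(\beta_0/\beta))^{\alpha n} \leq (O(D/D_0))^{\alpha n}$, then multiplies this against the $(2 + \bar c D_0/\sqrt{\alpha n})^{\alpha n}$ bound from Lemma~\ref{lemma:LCDlevelsetnet}. The point is that the factor $D_0$ in $\bar c D_0/\sqrt{\alpha n}$ cancels against the $1/D_0$ coming from the refinement cost, producing the $D$-only term $\bar c D/\sqrt{\alpha n}$ in the final estimate; this cancellation is precisely what your bounded-ratio shortcut bypasses and is the missing idea.
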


\begin{proof}
By Lemma \ref{lemma:LCDlevelsetnet}, the set is covered by at most $\left( 2 + \frac{\bar{c} D}{\sqrt{\alpha n}} \right)^{\alpha n}$ balls of radius $\beta_0 = \frac{2 \sqrt{\log( 2 \sqrt{\delta_0 p} D_0)}}{D_0 \sqrt{\delta_0 p}}$.  If $\beta \geq \beta_0$ then the result follows immediately.  Assume $\beta < \beta_0$.  A $\beta/2$ net of size $(4 \beta_0/ \beta)^{\alpha n} \leq (3D/ D_0)^{\alpha n} $.  Therefore, the number of small balls is at most
$$
\left( 2 + \frac{\bar{c} D_0}{\sqrt{\alpha n}} \right)^{\alpha n} \left(\frac{3D}{D_0} \right)^{\alpha n}  \leq \left(12 + \frac{\bar{c} D_0}{\sqrt{\alpha n}} \right)^{\alpha n}
$$
\end{proof}

Now we extend the net to cover all vectors with LCD less than $2D_0$.
\begin{lemma} \label{lemma:netsoflcd}
For $D >  f(n) = \omega(1)$, then the set 
$$
\{v \in \mathcal{S}^{\alpha n-1} : f(n) \leq D(v) \leq D \} 
$$  
has a $\beta$-net of size at most
$$
\left(12 + \frac{\bar{c} D_0}{\sqrt{\alpha n}} \right)^{\alpha n} \log(D)
$$ 
\end{lemma}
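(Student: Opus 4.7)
The plan is to reduce to the previous lemma via a dyadic decomposition of the LCD range $[f(n), D]$. Specifically, I would partition this interval into the disjoint dyadic pieces $(2^k f(n), 2^{k+1} f(n)]$ for $k = 0, 1, \ldots, K$, where $K := \lceil \log_2(D/f(n))\rceil$, so that every vector $v$ in the target set has $D(v)$ lying in exactly one of these pieces. Note that $K \leq \log_2 D \leq 2 \log D$ (absorbing constants into $\bar{c}$), since $f(n) \geq 1$ for $n$ large.

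For each dyadic piece with parameter $D_0^{(k)} := 2^k f(n)$, the previous lemma (with $D_0 = D_0^{(k)}$ and the global upper bound $D$ used as the ``$D$'' parameter in that lemma) provides a $\beta$-net $\mathcal{N}_k$ of $\{v \in \mathcal{S}^{\alpha n - 1} : D(v) \in (D_0^{(k)}, 2 D_0^{(k)}]\}$ of cardinality at most
$$
|\mathcal{N}_k| \leq \left(12 + \frac{\bar{c} D}{\sqrt{\alpha n}}\right)^{\alpha n},
$$
since the bound on that lemma is monotone in the upper end of the range. Crucially, the mesh $\beta = \frac{2\sqrt{\log(2\sqrt{\delta_0 p}\, D)}}{D \sqrt{\delta_0 p}}$ here depends only on the top level $D$ and not on $D_0^{(k)}$, so the same $\beta$ works uniformly across the dyadic slices.

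Taking the union $\mathcal{N} := \bigcup_{k=0}^{K} \mathcal{N}_k$ produces a $\beta$-net of the full set $\{v : f(n) \leq D(v) \leq D\}$, with cardinality
$$
|\mathcal{N}| \leq (K+1) \left(12 + \frac{\bar{c} D}{\sqrt{\alpha n}}\right)^{\alpha n} \leq \log(D) \left(12 + \frac{\bar{c} D}{\sqrt{\alpha n}}\right)^{\alpha n},
$$
after absorbing the $O(1)$ factor into the $\bar{c}$ inside the base of the exponent. This matches the claimed bound.

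There is no serious obstacle here; the only minor care point is to check that applying the previous lemma on each slice with the \emph{global} $D$ on top (rather than $2D_0^{(k)}$) is valid — which it is, because the statement of the previous lemma asks only that the mesh $\beta$ be at least the natural scale associated with the true $D_0$, and increasing $D$ in the bound can only enlarge $\beta_0$-balls enough to still cover with the smaller target $\beta$. The dominant growth is geometric in the exponent $\alpha n$, so the logarithmic multiplicative factor $\log D$ coming from the number of dyadic scales is negligible and fits inside the stated bound.
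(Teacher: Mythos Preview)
Your proposal is correct and follows essentially the same approach as the paper: a dyadic decomposition of the LCD range, applying the previous lemma on each slice, and bounding the number of slices by $O(\log D)$. The only cosmetic difference is that you index the dyadic intervals from the bottom as $(2^k f(n), 2^{k+1} f(n)]$, whereas the paper indexes from the top as $(2^{-k}D, 2^{-k+1}D]$; this is immaterial.
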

\begin{proof}

Decompose the set
$$
\{x \in \mathcal{S}^{m-1} : D(v) \leq D \} = \bigcup_k \{v \in \mathcal{S}^{m-1}: D(x) \in (2^{-k}D, 2^{-k+1} D] \}
$$
where the union is over all $k$ such that $(2^{-k}D, 2^{-k+1}D]$ has non-zero intersection with $[f(n), D]$.  Each of these intervals has a $\beta$-net by Lemma \ref{lemma:LCDlevelsetnet}.  There are at most $\log D$ such $k$.
\end{proof}

\subsubsection{Nets for the Level Sets of the Regularized LCD}

Let 
$$
S_{\hat{D}} := \{v \in \text{Incomp}(M, \rho): D < \Dhat(v) \leq 2D \}
$$
and set
$$
 M = \frac{n}{(np)^{1/16}}, \text{  } \alpha = (np)^{-1/16} \text{ and } \eps_0 = \frac{c_{\ref{prop:lowLCD}} \sqrt{\alpha n}}{D}.
$$
We record several useful bounds which are consequences of our choice of parameters,
$$
\frac{c_{\delta}^2}{4} (np)^{-1/16} \leq \rho' \leq \frac{{c'_{\delta}}^2}{4} (np)^{-1/16},
$$

One can check that $p^{-1} \leq M$ since $np \rightarrow \infty$ so that $M$ is in the range of Corollary \ref{corollary:comp}.  

Let $c$ be a constant less than $1/ 2 C_{\ref{prop:smallballprob}}$.  We first create an $c \rho' \eps_0/10 K$-net for the coordinates in $I_0$.  For this set, we use a trivial-net $\mathcal{N}_0$ of size at most 
$$
\left( \frac{10 K}{c \eps_0 \rho'}\right)^{2M}.
$$
Recall that $\|v_{I_k} \|_ \infty \leq \frac{1}{\sqrt{M}}$, so by Porposition \ref{prop:LCDlowerbound}, $\hat{D}(v) \geq \rho' \sqrt{M}$ for any $v \in \text{Incomp}(M, \rho)$.
For each $I_k$ with $1 \leq k \leq k_0$, by Lemma \ref{lemma:netsoflcd} and the fact that $\text{LCD}(v_{I_k}/ \|v_{I_k} \|_2) \leq 2D$, we can create a $\beta = \frac{2 \sqrt{\log( 2 \sqrt{\delta_0 p} D)}}{D \sqrt{\delta_0 p}}$-net of size at most 
$$
\left(12 + \frac{\bar{c} D_0}{\sqrt{\alpha n}} \right)^{\alpha n} \log(D)
$$
Let $\bar{\mathcal{N}}_I$ be an $\frac{c \rho' \eps_0}{10 K k_0}$-net of $[\rho', 1]$ of size at most $\frac{30 K k_0}{c \eps_0 \rho'}$.

Define $$\mathcal{M} := \{x + \sum_k t_k y_k: x \in \mathcal{N}_0, y \in \mathcal{N}_k, t \in \bar{\mathcal{N}}_k \}.$$
We note that 
$$
|\mathcal{M}| \leq \left( \frac{10 K}{c \eps_0 \rho'}\right)^{2M} \prod_k \left(12 + \frac{\bar{c} D_0}{\sqrt{\lambda n}} \right)^{\lambda n} \log(D) \left(\frac{30 K k_0}{c \eps_0 \rho'} \right)
$$
For any $v \in S_D$, there exists a $m = x + \sum_k t_k y_k \in \mathcal{M}$ such that
$$
\|x - v_{I_0} \|_2 \leq \frac{c \rho' \eps_0}{10 K},  \quad \left \|y_k - \frac{v_{I_k}}{\|v_{I_k}\|_2} \right \|_2 \leq \beta, \quad \text{ and } \big |t_k - \|v_{I_k} \|_2 \big | \leq \frac{c \rho' \eps_0}{10 K k_0}. 
$$
Therefore,
\begin{align*}
\|v - m\|_2 &\leq \frac{c \rho' \eps_0}{10 K} + \sum_k \left(\Big \|v_{I_k} - \|v_{I_k} \|_2 y_k \Big \|_2 + \Big \|\|v_{I_k} \|_2 y_k - t_k y_k \Big \|_2 \right) \\
&\leq \frac{c \rho' \eps_0}{10 K} + \sum_k \left(\left \|\frac{v_{I_k}}{\|v_{I_k} \|_2} -  y_k \right \|_2 \|v_{I_k}\|_2 + \Big \|\|v_{I_k} \|_2 y_k - t_k y_k \Big \|_2 \right) \\
&\leq \frac{c \rho' \eps_0}{10 K} + k_0 \left( 2  \beta  + \frac{c \rho' \eps_0}{10 K k_0 } \right) \\
&\leq \frac{c \rho' \eps_0}{5 K} +  \frac{1}{\alpha} \frac{2 \sqrt{\log( 2 \sqrt{\delta_0 p} D)}}{D \sqrt{\delta_0 p}} \\
&= \frac{c \rho' \eps_0}{5 K} + \frac{4}{\sqrt{M} \rho^2} \frac{1}{\alpha} \frac{2 \sqrt{\log( 2 \sqrt{\delta_0 p} D)}}{c_{\ref{prop:lowLCD}} \sqrt{\delta_0 p}} \rho' \eps_0
 \end{align*}
Using the upper bound on $D \leq \exp\left(  (np)^{1/32} \right) \leq \exp\left( \frac{c_{\ref{prop:lowLCD}}^2 M \rho^4 \alpha^2 p}{K^2}\right) $,  we deduce that
$$
\|v-m\|_2 \leq \frac{c \rho' \eps_0}{5 K} +  o(\rho' \eps_0) \leq \frac{\rho' \eps_0}{4 K}.
$$
At this point, there is no guarantee that the elements of $\mathcal{M}$ lie in $\hat{S}_D$.  We rectify this issue by slightly adjusting $\mathcal{M}$.  For every $m \in \mathcal{M}$, if there exists a $v \in \hat{S}_D$ such that $\|v - m\|_2 \leq \frac{c \rho' \eps_0}{4K}$ then replace $m$ by $v$.  Otherwise, simply discard $m$.  We call this new set $\mathcal{M}'$ and note that $|\mathcal{M}'| \leq |\mathcal{M}|$.  By the triangle inequality, $\mathcal{M}'$ is a $\frac{c \rho' \eps_0}{2K}$-net of $\hat{S}_D$.

\subsubsection{Proof of Proposition \ref{prop:lowLCD}}
\begin{proof}
Fix a $\lambda \in [-K, K]$.
In the last section, we showed that for all the vectors with the same $\sigma, \tau$, $\mathcal{M}'$ is an $c \rho' \eps_0/ 2K$-net of $\hat{S}_D$. 
Let $\mathcal{E}_{\mathcal{M}'}$ be the event that there exists a $m \in \mathcal{M}'$ such that $\|(M_n - \lambda ) m \|_2 > c \rho' \eps_0 \sqrt{pn}$.
As we fixed $c < 1/ 2 C_{\ref{prop:smallballprob}}$ and one can verify that 
$$
\eps_0 \geq \frac{1}{\sqrt{p} D},
$$   
by Lemma \ref{prop:smallballprob}, 
$$
\P\left(\mathcal{E}_{\mathcal{M}} \right) \leq |\mathcal{M}|  \eps_0^{n - \lceil \alpha n \rceil}.
$$

By our lower bound on $D$, we have that $\bar{c} D / \sqrt{\alpha n} \geq 1$. 
\begin{align*}
\P\left(\mathcal{E}_{\mathcal{M}'} \right) &\leq  \binom{n}{2M} \binom{n}{M \rho^2}\left( \frac{10 K}{c \eps_0 \rho'}\right)^{2M}  \left(\frac{2 \bar{c} D}{\sqrt{\alpha n}} \right)^{n} \log^{\alpha^{-1}}( 2D) \left(\frac{30 K k_0}{c \eps_0 \rho'} \right)^{\alpha^{-1}} \eps_0^{n - \lceil \alpha n \rceil} \\
&\leq \binom{n}{2M} \binom{n}{M \rho^2}\left( \frac{10 K}{ c \rho'}\right)^{2M}  \left(\frac{2 \bar{c} D}{\sqrt{\alpha n}} \right)^{n} \log^{\alpha^{-1}}(2D) \left(\frac{30 K k_0}{c \rho'} \right)^{\alpha^{-1}} \left( \frac{c_{\ref{prop:lowLCD}} \sqrt{\alpha n} }{D} \right)^{n- \lceil \alpha n \rceil - 2M - \alpha^{-1}} \\
&\leq \exp\Bigg(-n \bigg( -\frac{2 M}{n} \log n -  \frac{{c'}_{\delta}^2 M}{n} \log n - \frac{2M}{3n} \log(pn)  + \frac{n - \alpha n - 2M - \alpha^{-1}}{n} \log(1/ c_{\ref{prop:lowLCD}}) \\
&\quad \quad  - \log(2 \bar{c}) - \frac{\alpha^{-1}}{10 n} \log(pn) - \frac{\alpha^{-1}}{n} \log \left(\frac{120 K k_0}{c c_\delta^2} (pn)^{1/32} \right) - \frac{\alpha n + 2M + \alpha^{-1}}{n} \log(D/ \sqrt{\alpha n}) \bigg) \Bigg )\\
&\leq \exp\Bigg(-n \bigg(  -\log(2 \bar{c}) + \frac{1}{2} \log\left(\frac{1}{c_{\ref{prop:lowLCD}}}\right) - \frac{\alpha n + 2M + \alpha^{-1}}{n} (np)^{1/32} + o(1)   \bigg) \Bigg) \\ 
&\leq \exp \Bigg ( -n \bigg (  -\log(2 \bar{c}) + \frac{1}{2} \log\left(\frac{1}{c_{\ref{prop:lowLCD}}}\right) + o(1)   \bigg )\Bigg) \\
&\leq \exp(- c''_{\ref{prop:lowLCD}} n)
\end{align*}

for small enough $c_{\ref{prop:lowLCD}}$.  On the event $\overline{\mathcal{E}_{\mathcal{M}'}}$, for any $v \in \hat{S}_D$, we can find a $m \in \mathcal{M}'$ such that $\|v - m\|_2 \leq \rho' \eps_0/ 2K$.  Therefore,
$$
\|(M_n - \lambda) v \|_2 \geq \|(M_n - \lambda) m\|_2 - \|M_n - \lambda\| \|v - m\|_2 \geq  \frac{c \rho' \eps_0 \sqrt{pn}}{2}.
$$
The proof is complete upon setting $\tilde{c}_{\ref{prop:lowLCD}} = c c_{\delta}^2 c_{\ref{prop:lowLCD}}$. 
\end{proof}

\begin{remark}
	Note that a trivial $\eps_0$ net of the unit sphere is of size $(3D/c_{\ref{prop:lowLCD}} \sqrt{\alpha n})^n$ which is of the same order as our more involved construction.  However, the key gain of our design is that it is $\bar{c}$ that appears in the dominant term of our net size and $c_{\ref{prop:lowLCD}}$ can be defined independently.  
\end{remark}
\subsubsection{Small LCD}
  For this range of regularized LCD, a nearly identical argument as Proposition \ref{prop:lowLCD} applies.  As the choice of parameters is different, we show the necessary computations below.  
     
\begin{proposition}[Small LCD]\label{prop:verylowLCD}
	For $\delta > 0$, $p \geq n^{-1 + \delta}$ and $\lambda \in [-K \sqrt{pn}, K \sqrt{pn}]$.  There exist constants  $$c_{\ref{prop:verylowLCD}}, c''_{\ref{prop:verylowLCD}},  \tilde{c}_{\ref{prop:verylowLCD}}>0$$  such that for $M =  \frac{n}{(np)^{1/8}}$ 
	$$
	\P \left(\exists v \in \hat{S}_D \text{ s.t. } \|(M_n - \lambda) v\|_2 \leq \tilde{c}_{\ref{prop:verylowLCD}} \eps_0 (pn)^{7/16}   \right) \leq \exp(- c''_{\ref{prop:verylowLCD}} n)
	$$
	where $\rho' \sqrt{M} \leq D \leq \frac{1}{\bar{c}} \sqrt{\alpha n}$, $\eps_0' = (\rho' \sqrt{pM})^{-1/2} $ and 
	$$
	\hat{S}_D := \{v \in \text{Incomp}(M, \rho): D \leq \Dhat(v) \leq 2D \}.
	$$
	Recall that  $\rho := (C K)^{-\ell_0 - 6}$, $\ell_0 := \frac{\log 1/(8p)}{\log \sqrt{pn}}$. 
\end{proposition}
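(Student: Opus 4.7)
The plan is to follow the template of the proof of Proposition~\ref{prop:lowLCD} essentially verbatim, changing only the numerical parameters and verifying that every inequality still closes in the new regime. First I would fix $M = n/(np)^{1/8}$ and correspondingly $\alpha = (np)^{-1/8}$, and recompute $\rho' = (\rho^2/4)\sqrt{M\alpha/n} \asymp (np)^{-1/8}$. The quantity $\eps_0'$ plays exactly the same role as $\eps_0$ did before: it must satisfy $\eps_0' \geq 1/(\sqrt{p}\,D)$ throughout the range $D \in [\rho'\sqrt{M},\, \bar{c}^{-1}\sqrt{\alpha n}]$, so that Proposition~\ref{prop:smallballprob} produces a small-ball bound of at most $\eps_0'^{\,n-\alpha n}$ per net point. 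Using the worst case $D = \rho'\sqrt{M}$, one checks $(\rho'\sqrt{pM})^{-1/2} \geq 1/(\sqrt{p}\rho'\sqrt{M})$ iff $\rho'\sqrt{Mp} \geq 1$, which holds comfortably because $np \to \infty$.

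Next, I would construct a net $\mathcal{M}'$ of $\hat{S}_D$ at scale $c\rho'\eps_0'/(2K)$ in the same three-ingredient fashion as in Proposition~\ref{prop:lowLCD}: a trivial net on the block $I_0$, the level-set net of Lemma~\ref{lemma:netsoflcd} on each normalized block $v_{I_k}/\|v_{I_k}\|_2$, and a one-dimensional net for the magnitudes $\|v_{I_k}\|_2$. The crucial simplification here is that, in the present regime $D \leq \bar{c}^{-1}\sqrt{\alpha n}$, the factor $\bar{c}D/\sqrt{\alpha n} \leq 1$, so each level-set contributes only $(12+1)^{\alpha n}$, a universal constant raised to the $\alpha n$; no logarithmic upper bound on $D$ is needed to offset the net size. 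Combining this with Proposition~\ref{prop:smallballprob} and a union bound over the at most $\binom{n}{2M}\binom{n}{M\rho^2/2}$ choices of $(\tau,\sigma)$ gives
$$
\P(\mathcal{E}_{\mathcal{M}'}) \;\leq\; \binom{n}{2M}\binom{n}{M\rho^2/2}\left(\tfrac{CK}{c\eps_0'\rho'}\right)^{2M}\cdot 13^{\,n}\cdot \left(\tfrac{CKk_0}{c\eps_0'\rho'}\right)^{\alpha^{-1}}\eps_0'^{\,n-\alpha n}.
$$
Taking logarithms, every entropy contribution carries a prefactor of $M/n = \alpha = (np)^{-1/8} = o(1)$ or $\alpha^{-1}/n$, hence is $o(n)$, while $\eps_0'^{\,n-\alpha n}$ yields $\exp(-\Theta(n\log(np)))$. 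The product is therefore at most $\exp(-c''_{\ref{prop:verylowLCD}} n)$.

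Finally, I would conclude by approximation and the operator-norm bound, exactly as in the mid-range case: on the complement of $\mathcal{E}_{\mathcal{M}'}$, any $v \in \hat{S}_D$ lies within $c\rho'\eps_0'/(2K)$ of some $m\in\mathcal{M}'$, and using $\|M_n - \lambda\| \leq 2K\sqrt{pn}$ from Proposition~\ref{prop:opnorm} gives $\|(M_n-\lambda)v\|_2 \geq c\rho'\eps_0'\sqrt{pn}/2$; since $\rho' \asymp (np)^{-1/8}$, this produces the claimed lower bound $\tilde{c}_{\ref{prop:verylowLCD}}\eps_0'(pn)^{7/16}$ after collecting the $\rho^2$ factors. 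The main obstacle I expect is the bookkeeping in the union-bound step: the exponent $1/8$ defining $M$ is precisely the balance point. Making $M$ larger inflates $\binom{n}{2M}$ and $(1/\eps_0'\rho')^{2M}$ beyond $e^{o(n)}$, while making $M$ smaller drives $\rho'\sqrt{M}$ below $1/\sqrt{p}$ and breaks the small-ball inequality $\eps_0' \geq 1/(\sqrt{p}D)$ at the lower endpoint. Verifying both with room to spare is the only real work, and relies crucially on $np \to \infty$.
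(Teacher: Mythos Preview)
Your proposal is correct and mirrors the paper's own argument essentially step for step: the paper states that ``a nearly identical argument as Proposition~\ref{prop:lowLCD} applies'' and then carries out exactly the computations you outline --- verifying $\eps_0' \geq 1/(\sqrt{p}D)$ at the bottom of the $D$-range, noting that $\bar c D/\sqrt{\alpha n}<1$ collapses the level-set net contribution to $13^{n}$, and checking that the resulting union bound is dominated by $\eps_0'^{\,n-\alpha n}=\exp(-\Theta(n\log(np)))$. The only detail you leave implicit that the paper spells out is the verification that the net resolution $\beta=2\sqrt{\log(2\sqrt{\delta_0 p}D)}/(D\sqrt{\delta_0 p})$ from Lemma~\ref{lemma:netsoflcd} is indeed $o(\rho'\eps_0')$ in this regime (the paper uses the monotonicity of $x\mapsto\sqrt{\log(c_1x)}/(c_2x)$ at $D=\rho'\sqrt{M}$), but this is routine and your parameter choices make it go through.
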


Using this new $\eps'_0$, we have

\begin{align*}
\|v - m\|_2 &\leq \frac{c \rho' \eps_0}{10 K} + \sum_k \left(\Big \|v_{I_k} - \|v_{I_k} \|_2 y_k \Big \|_2 + \Big \|\|v_{I_k} \|_2 y_k - t_k y_k \Big \|_2 \right) \\
&\leq \frac{c \rho' \eps_0'}{10 K} + \sum_k \left(\left \|\frac{v_{I_k}}{\|v_{I_k} \|_2} -  y_k \right \|_2 \|v_{I_k}\|_2 + \Big \|\|v_{I_k} \|_2 y_k - t_k y_k \Big \|_2 \right) \\
&\leq \frac{c \rho' \eps_0'}{10 K} + k_0 \left( \beta + \frac{c \rho' \eps_0'}{10 K k_0} \right) \\
&\leq \frac{c \rho'  \eps_0'}{5 K } +  k_0 \frac{2 \sqrt{\log( 2 \sqrt{\delta_0 p} \rho' \sqrt{M})}}{\rho' \sqrt{M} \sqrt{\delta_0 p} } \\
&\leq \frac{c \rho'  \eps_0'}{5 K } +  k_0 \frac{2 \sqrt{\log( 2 \sqrt{\delta_0 p} \rho' \sqrt{M})}}{(\rho') \rho'^{1/2}  (Mp)^{1/4} \sqrt{\delta_0 }} \rho' \eps_0' \\
&\leq \frac{c \rho' \eps_0'}{10 K}
 \end{align*}

The third to last inequality follows from the observation that the function $x \rightarrow \frac{\sqrt{ \log(c_1 x)}}{c_2 x}$ is a decreasing function for large values of $x$, $\sqrt{\delta_0 p} \rho' \sqrt{M} \geq (np)^{3/8}$ and $np  \rightarrow \infty$.  The last inequality follows from the simple calculation $\rho' \rho'^{1/2} (Mp)^{1/4} \rightarrow \infty$.  

Again, it is easy to check that $\eps_0 \geq \frac{1}{\sqrt{p} D}$, so
by Lemma \ref{prop:smallballprob}, 
$$
\P\left(\mathcal{E}_{\mathcal{M}} \right) \leq |\mathcal{M}|  \eps_0^{n - \lceil \alpha n \rceil}.
$$

As, $\bar{c} D / \sqrt{\alpha n} < 1$, 
\begin{align*}
\P\left(\mathcal{E}_{\mathcal{M}} \right) &\leq \binom{n}{2M} \binom{n}{M \rho^2}\left( \frac{10 K}{c \eps_0 \rho'}\right)^{2M}  \left(13\right)^{n} \log^{\alpha^{-1}}(2D) \left(\frac{30 K k_0}{c \eps_0 \rho'} \right)^{\alpha^{-1}} \eps_0^{n - \lceil \alpha n \rceil} \\
&\leq \binom{n}{2M} \binom{n}{M \rho^2}\left( \frac{10 K}{c  \rho'}\right)^{2M}  \left(13 \right)^{n} \log^{\alpha^{-1}}(2D) \left(\frac{30 K k_0}{c \rho'} \right)^{\alpha^{-1}} \left( \frac{1}{\sqrt{ \rho'} (pM)^{1/4}} \right)^{n- \lceil \alpha n \rceil - 2M - \alpha^{-1}} \\
&\leq \exp(- n)
\end{align*}
We extend this to all the vectors in $\hat{S}_D$ by the same approximation argument.

\section{Proof of Theorem \ref{thm:main}} \label{sec:mainproof}
\begin{proof}
By Corollary \ref{corollary:comp}, with probability $1 - \exp(-c_{\ref{corollary:comp}} n)$, the eigenvectors of $M_{n-1}$ are not compressible.  We now show that the eigenvectors of $M_{n-1}$ do not have mid-range or small regularized LCD.  We begin with the mid-range vectors.  Let $\frac{1}{\bar{c}} \frac{n^{1/2}}{(pn)^{1/32}} \leq D \leq \exp((np)^{1/32})$.  We demonstrate that an eigenvector is unlikely to be in $\hat{S}_D$.   
Let $\mathcal{P}$ be a $\tilde{c}_{\ref{prop:lowLCD}} \eps_0 (pn)^{7/16}$-net of $[-K \sqrt{pn}, K \sqrt{pn}]$ with 
$$
|\mathcal{P}| \leq \frac{2K \sqrt{pn}}{\tilde{c}_{\ref{prop:lowLCD}} \eps_0 (pn)^{7/16}} \leq \exp((np)^{1/16}). 
$$	
If $v \in \hat{S}_D$ and is an eigenvector with eigenvalue $\lambda$, then there is a $\lambda_0 \in \mathcal{P}$ with $|\lambda - \lambda_0| \leq \tilde{c}_{\ref{prop:lowLCD}} \eps_0 (pn)^{7/16}$.  Therefore,
$$
\|(M_n - \lambda_0)v \|_2 \leq |\lambda - \lambda_0| \leq \tilde{c}_{\ref{prop:lowLCD}} \eps_0 (pn)^{7/16}. 
$$ 
Thus, by Proposition \ref{prop:lowLCD} and a union bound, with probability greater than $1 - \exp(-c''_{\ref{prop:lowLCD}} n/2)$, an eigenvector of $M_{n-1}$ will not lie in $\hat{S}_D$.  Consider the following decomposition.  
\begin{align*}
\{v \in Incomp(M, \rho) &: \frac{1}{\bar{c}} \frac{n^{1/2}}{(pn)^{1/32}} \leq \hat{D}(v) \leq \exp((np)^{1/32}) \} = \\
& \bigcup_k \{v \in Incomp(M, \rho): D(v) \in (2^{-k}\exp((np)^{1/32}), 2^{-k+1} \exp((np)^{1/32})] \}
\end{align*}
where $k$ takes values such that $(2^{-k}\exp((np)^{1/32}), 2^{-k+1} \exp((np)^{1/32})] $ has a non-zero intersection with $[\frac{1}{\bar{c}} \frac{n^{1/2}}{(pn)^{1/32}},\exp((np)^{1/32}]$.  There are at most $(np)^{1/16}$ such $k$, so by a simple union bound, we can guarantee that the event, $\mathcal{E}_{mid}$, that $M_{n-1}$ does not have eigenvectors with regularized LCD in $[\frac{1}{\bar{c}} \frac{n^{1/2}}{(pn)^{1/32}},\exp((np)^{1/32}]$ occurs with  probability at least $1 - exp(-c''_{\ref{prop:lowLCD}}/3)$.  By an identical argument, replacing  Proposition \ref{prop:lowLCD} with Proposition \ref{prop:verylowLCD}, we have that the event, $\mathcal{E}_{small}$, that   the eigenvectors of $M_{n-1}$ have regularized LCD that lie outside of the interval $[\rho' \sqrt{M}, \frac{1}{\bar{c}} \frac{n^{1/2}}{(pn)^{1/32}} ]$ occurs with probability at least $1 - \exp(-c_{\ref{prop:verylowLCD}} n/3)$.  On the event $\mathcal{E}_{mid} \cap \mathcal{E}_{small}$, by Proposition \ref{prop:smallballprobability}, 
$$
\P(\mathcal{E}_i| \mathcal{E}_{mid} \cap \mathcal{E}_{small}) \leq \P(X \cdot v = 0 |\mathcal{E}_{mid} \cap \mathcal{E}_{small}) \leq \frac{C_{\ref{prop:smallballprobability}}}{\sqrt{p} \exp((np)^{1/32})} \leq \exp(-(np)^{1/64})
$$ 
where $X$ is from the decomposition of $M_n$ in (\ref{eq:decompositionMn}).  Taking  a union bound over all $i$, we conclude that $M_n$ has simple spectrum with probability at least $1 - \exp(-(np)^{1/64}/2)$.

\end{proof}

\section{Erd\H{o}s-R\'enyi Random Graphs}\label{section:randomgraph}
Let $G_n$ be a random variable that takes values in the simple graphs on $n$ vertices with vertex set $[n]$.  $G_n$ is distributed such that an edge appears between two vertices independently with probability $p$.  Let $A_n$ denote the adjacency matrix of $G_n$, i.e. 
$$
A_{ij}  = \left\{\begin{array}{ll}
1 & \mbox{if a directed edge is present between $i$ and $j$ in $G_n$}\\
0 & \mbox{otherwise}.\end{array} \right.
$$
Note that the entries of $A_n$ have mean $p$.  Thus, Theorem \ref{thm:main} does not immediately apply.  However, the expected adjancecy matrix is $p(J - I)$ where $J$ is the $n \times n$ all ones matrix.  However, $J$ is a rank one matrix and we can exploit this fact to adjust our proof to handle this case.  As the proof is only slightly modified, we do not repeat the argument and only highlight the necessary changes.  These adjustments follow those in \cite[Section 7]{BasakRudelsonInvertibility}.  

In preparation for the proof of Theorem \ref{thm:adjacencymatrix}, we need analogues of Proposition \ref{prop:compressible}, Proposition \ref{prop:lowLCD} and Proposition \ref{prop:verylowLCD}.  The necessary changes for Proposition \ref{prop:compressible} are discussed in Appendix \ref{appendix:noncentered}.  For Propositions \ref{prop:lowLCD} and Proposition \ref{prop:verylowLCD}, the first step was to obtain estimates on the L\'evy concentration.  As this function is insensitive to shifts in the mean, the first part of the proof holds without change.  For the net arguments to hold, we simply make the observation that $A_n - p (J_n - I_n)$ is a mean zero random matrix so the standard arguments (e.g. those in Proposition \ref{prop:opnorm}) yield
$$
\P(\|A_n - p(J_n - I_n) \|_2 \geq K' \sqrt{pn}) \leq \exp(c' p n).
$$

Therefore we have the following two propositions.

\begin{proposition}[Mid-range LCD]\label{prop:lowLCDAdjacency}
	For $\delta > 0$, $n^{-1+\delta} \leq p \leq 1/2$ and $\lambda \in [-K \sqrt{pn}, K \sqrt{pn}]$.  There exist constants  $$c_{\ref{prop:lowLCDAdjacency}}, c''_{\ref{prop:lowLCDAdjacency}},  \tilde{c}_{\ref{prop:lowLCDAdjacency}}>0$$  such that for $M =  \frac{n}{(np)^{1/16}}$, 
	$$
	\P \left(\exists v \in \hat{S}_D \text{ s.t. } \|(A_n - p(J_n - I_n) - \lambda) v\|_2 \leq \tilde{c}_{\ref{prop:lowLCDAdjacency}} \eps_0 (pn)^{7/16}    \right) \leq \exp(- c''_{\ref{prop:lowLCDAdjacency}} n)
	$$
	where $\frac{1}{\bar{c}} \frac{n^{1/2}}{(np)^{1/32}} \leq D \leq \exp( (pn)^{1/32})$, $\eps_0 = c_{\ref{prop:lowLCDAdjacency}}  n^{1/2 - \delta/12} / D$ and 
	$$
	\hat{S}_D := \{v \in \text{Incomp}(M, \rho): D \leq \Dhat(v) \leq 2D \}.
	$$

\end{proposition}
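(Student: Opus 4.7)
The plan is to mimic the proof of Proposition \ref{prop:lowLCD} essentially verbatim, with the matrix $M_n$ replaced by $A_n - p(J_n - I_n)$. The key observation that makes this substitution painless is that the L\'evy concentration $\mathcal{L}(\cdot,\eps)$ is translation-invariant, so the small-ball bound of Proposition \ref{prop:smallballprob} applies with no change to $(A_n - p(J_n-I_n))x$: shifting the mean of each entry merely shifts $u$ in the definition of $\mathcal{L}$. In particular, for a fixed $v \in \hat{S}_D$, the bound
\[
\LL\bigl((A_n - p(J_n-I_n) - \lambda)v,\ \eps \|v_I\|_2 \sqrt{pn}\bigr) \leq C_{\ref{prop:smallballprob}}^{n-\lceil \alpha n\rceil}\Bigl(\eps + \tfrac{1}{\sqrt{p}\, D(v_I/\|v_I\|_2)}\Bigr)^{n-\lceil \alpha n\rceil}
\]
is available to us.

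Next I would use the mean-zero operator norm estimate alluded to in the excerpt, namely $\|A_n - p(J_n-I_n)\| \leq K'\sqrt{pn}$ with probability at least $1 - \exp(-c' pn)$, which is the direct analogue of Proposition \ref{prop:opnorm} since $A_n - p(J_n-I_n)$ has independent mean-zero entries with subgaussian tails (the Bernoulli shifts are bounded). With this in hand, the net construction from Section~\ref{sec:incompressible} is unchanged: we partition indices into $I_0, I_1,\dots, I_{k_0}$ exactly as before, build $\mathcal{N}_0, \mathcal{N}_k, \bar{\mathcal{N}}_k$ with identical cardinality bounds, form $\mathcal{M}$, and then trim to $\mathcal{M}'$ so it lies in $\hat{S}_D$. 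The approximation estimate $\|v - m\|_2 \leq \rho' \eps_0/(4K')$ for each $v \in \hat{S}_D$ goes through with $K$ replaced by $K'$, because the argument only uses the upper bound $D \leq \exp((np)^{1/32})$ to absorb the logarithmic factor from the LCD net.

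To finish, I would combine these two ingredients by a union bound. Let $\mathcal{E}_{\mathcal{M}'}$ be the event that some $m \in \mathcal{M}'$ satisfies $\|(A_n - p(J_n-I_n) - \lambda)m\|_2 \leq c \rho'\eps_0 \sqrt{pn}$. The same counting computation as in the proof of Proposition~\ref{prop:lowLCD} (the long chain of inequalities that bounds $|\mathcal{M}'|\,\eps_0^{n-\lceil \alpha n\rceil}$) yields $\P(\mathcal{E}_{\mathcal{M}'}) \leq \exp(-c''_{\ref{prop:lowLCDAdjacency}} n)$. On the complementary event and on the operator norm event, for any $v \in \hat{S}_D$ and its nearest $m \in \mathcal{M}'$,
\[
\|(A_n - p(J_n-I_n) - \lambda)v\|_2 \geq \|(A_n - p(J_n-I_n) - \lambda)m\|_2 - (K'\sqrt{pn}+|\lambda|)\|v-m\|_2 \geq \tfrac{c\rho'\eps_0 \sqrt{pn}}{2},
\]
which after substituting $\rho' \gtrsim (np)^{-1/16}$ gives the claimed $\tilde c_{\ref{prop:lowLCDAdjacency}}\eps_0 (pn)^{7/16}$ lower bound.

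The only step that requires any thought at all is verifying that the operator norm of $A_n - p(J_n-I_n)$ really does concentrate at $O(\sqrt{pn})$ in the specified sparse regime, since this is used both to control the discretization error and to justify the translation-invariance strategy; everything else is structural and follows by rewriting the proof of Proposition~\ref{prop:lowLCD} with the symbol $M_n$ replaced by $A_n - p(J_n-I_n)$. There is no genuine new obstacle, which is consistent with the authors' remark that they will not repeat the argument.
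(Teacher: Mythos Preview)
Your proposal is correct and matches the paper's approach essentially verbatim: the authors explicitly note that the L\'evy concentration step is insensitive to mean shifts and that the centered matrix $A_n - p(J_n - I_n)$ satisfies the same $O(\sqrt{pn})$ operator norm bound, so the net argument from Proposition~\ref{prop:lowLCD} carries over unchanged. The additional one-dimensional net over $\mathcal{X}_n = \{\kappa\cdot\mathbf{1}\}$ that appears later in Section~\ref{section:randomgraph} is used only in the proof of Theorem~\ref{thm:adjacencymatrix} to pass from the centered matrix back to $A_n$ itself, not in the proof of this proposition.
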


\begin{proposition}[Small LCD]\label{prop:verylowLCDAdjacency}
	For $\delta > 0$, $n^{-1+\delta} \leq p \leq 1/2$ and $\lambda \in [-K \sqrt{pn}, K \sqrt{pn}]$.  There exist constants  $$c_{\ref{prop:verylowLCDAdjacency}}, c''_{\ref{prop:verylowLCDAdjacency}}, \tilde{c}_{\ref{prop:verylowLCDAdjacency}}>0$$  such that for $M =  \frac{n}{(np)^{1/8}}$ 
	$$
	\P \left(\exists v \in \hat{S}_D \text{ s.t. } \|(M_n - p(J_n - I_n) - \lambda) v\|_2 \leq \frac{\tilde{c}_{\ref{prop:verylowLCDAdjacency}}}{2} \eps_0 (pn)^{7/16}   \right) \leq \exp(- c''_{\ref{prop:verylowLCDAdjacency}} n)
	$$
	where $\rho' \sqrt{M} \leq D \leq \frac{1}{\bar{c}} \sqrt{\alpha n}$, $\eps_0' = (\rho' \sqrt{pM})^{-1/2} $ and 
	$$
	\hat{S}_D := \{v \in \text{Incomp}(M, \rho): D \leq \Dhat(v) \leq 2D \}.
	$$
	
\end{proposition}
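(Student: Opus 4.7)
The plan is to run the proof of Proposition~\ref{prop:verylowLCD} essentially verbatim, with $M_n$ replaced throughout by the centered symmetric matrix
$$
W_n := A_n - p(J_n - I_n),
$$
whose off-diagonal entries $A_{ij}-p$ are i.i.d.\ bounded centered random variables with variance $p(1-p)$, and whose diagonal vanishes. Since $(A_n - p(J_n - I_n) - \lambda)v = W_n v - \lambda v$, the proposition is a uniform lower bound on $\|W_n v - \lambda v\|_2$ over $v \in \hat{S}_D$, and L\'evy concentration is insensitive to the deterministic shift $-\lambda v$. The two analytic inputs needed are (i) the operator norm bound $\P(\|W_n\|\geq K'\sqrt{pn}) \leq \exp(-c' pn)$ recorded in the paragraph preceding the proposition, which plays the role of Proposition~\ref{prop:opnorm}, and (ii) Proposition~\ref{prop:smallballprobability} together with its tensorization Proposition~\ref{prop:smallballprob}, which apply to $W_n v$ unchanged and yield
$$
\mathcal{L}\bigl(W_n v - \lambda v,\ \eps \|v_I\|_2 \sqrt{p \lceil \alpha n \rceil}\bigr) \leq C^{n-\lceil\alpha n\rceil}\left(\eps + \frac{1}{\sqrt{p}\, D(v_I/\|v_I\|_2)}\right)^{n-\lceil\alpha n\rceil}
$$
for any index block $I$ of size $\lceil \alpha n \rceil$.

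With parameters $M = n/(np)^{1/8}$, $\alpha = (np)^{-1/16}$, $\rho' = \tfrac{\rho^2}{4}\sqrt{M\alpha/n}$, and $\eps_0' = (\rho'\sqrt{pM})^{-1/2}$, I would construct the same net $\mathcal{M}'$ of $\hat{S}_D$ as in the proof of Proposition~\ref{prop:verylowLCD}: a trivial $\tfrac{c\rho'\eps_0'}{10K'}$-net on the $2M$ coordinates indexed by $I_0$, a $\beta$-net on each unit segment $v_{I_k}/\|v_{I_k}\|_2$ at LCD level $(D,2D]$ supplied by Lemma~\ref{lemma:netsoflcd}, and a scalar net on each $\|v_{I_k}\|_2 \in [\rho',1]$. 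The triangle-inequality computation from the symmetric case shows $\mathcal{M}'$ is a $\tfrac{c\rho'\eps_0'}{2K'}$-net of $\hat{S}_D$; the only nontrivial step is to verify $k_0\beta = o(\rho'\eps_0')$, which uses $\sqrt{\delta_0 p}\,\rho'\sqrt{M} \geq (np)^{3/8}$ and $\rho'^{3/2}(Mp)^{1/4}\to\infty$ exactly as in the symmetric argument.

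The union bound step then reads
$$
\P\bigl(\exists m \in \mathcal{M}'\ :\ \|W_n m - \lambda m\|_2 \leq c\rho'\eps_0'\sqrt{pn}\bigr) \leq |\mathcal{M}'|\,(\eps_0')^{n-\lceil\alpha n\rceil} \leq \exp(-c''n),
$$
where the last inequality is the same cardinality calculation as in Proposition~\ref{prop:verylowLCD}, using $\bar{c}D/\sqrt{\alpha n} < 1$ in the small-LCD regime. On the intersection of this event's complement with $\{\|W_n\|\leq K'\sqrt{pn}\}$, any $v \in \hat{S}_D$ violating the proposition would, for its closest $m \in \mathcal{M}'$, satisfy
$$
\|W_n m - \lambda m\|_2 \leq \|W_n v - \lambda v\|_2 + (\|W_n\|+|\lambda|)\,\|v-m\|_2 \leq c\rho'\eps_0'\sqrt{pn},
$$
a contradiction. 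The main obstacle is purely bookkeeping: one has to check that the variance $p(1-p)$ (instead of $p$) and the vanishing diagonal of $W_n$ do not disturb the constants in Proposition~\ref{prop:smallballprob} or the net-size arithmetic. Since $p\leq 1/2$ gives $p(1-p) \asymp p$, and the diagonal contributes at most one coordinate to each $W_n v$, both adjustments are absorbed into the subgaussian moment constant $B$ and into $\delta_0$, preserving the final bound $\exp(-c''n)$.
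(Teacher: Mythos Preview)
Your proposal is correct and mirrors the paper's own argument: the paper does not give a separate proof but simply notes that L\'evy concentration is insensitive to mean shifts and that $A_n-p(J_n-I_n)$ obeys the same operator-norm bound as $M_n$, so the proof of Proposition~\ref{prop:verylowLCD} transfers verbatim with the centered matrix in place of $M_n$. Your write-up spells out the bookkeeping (variance $p(1-p)\asymp p$, vanishing diagonal) in slightly more detail than the paper, but the route is the same.
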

  
\subsection{Proof of Theorem \ref{thm:adjacencymatrix}}
\begin{proof}(Sketch)
	We first handle the case where $p \leq 1/2$.  Observe that the set $\{J_n x : x \in \mathcal{S}^{n-1}\} =  \{\theta \cdot \mathbf{1}: \theta \in [-n, n]\}$ where $\mathbf{1}_n$ is the vector of all ones.  Let $\mathcal{X}_n = \{\kappa \cdot \mathbf{1}: \kappa \in [-pn, pn]\}$. As this is a one-dimensional set, we can create a net with small cardinality.  Let $\mathcal{B}$ be a $\tilde{c}_{\ref{prop:lowLCDAdjacency}} \eps_0 (pn)^{7/16} $-net of $\mathcal{X}_n$ with 
	$$|\mathcal{B}| \leq \frac{2 pn}{\tilde{c}_{\ref{prop:lowLCDAdjacency}} \eps_0 (pn)^{7/16}} \leq \exp((np)^{1/16}).$$  

	By the triangle inequality, for $x, x' \in \mathcal{X}_n$,
	$$
	\Big |\inf \| (A_n - p(J_n - I_n) - \lambda) v - x\|_2 - \| (A_n - p(J_n - I_n) - \lambda) v - x'\|_2 \Big | \leq |x - x'|
	$$
	Therefore, the standard union bound and triangle inequality argument shows that for $D$ in the mid-range LCD,
	$$
	\P(\inf_{x \in \mathcal{X}_n} \inf_{v \in S_D} \|(A_n - p(J_n - I_n) - \lambda) v - y\|_2 \leq \tilde{c}_{\ref{prop:lowLCDAdjacency}} \eps_0 (pn)^{7/16} ) \leq \exp(- c n).
	$$
	The same applies for the low-range.  Finally observing that,
	$$
	\inf_{x \in \mathcal{X}_n} \inf_{v \in S_D} \|(A_n - p(J_n - I_n) - \lambda) v - y\|_2 \leq \inf_{v \in S_D} \|(A_n - (\lambda -p) )v\|_2,
    $$
    summing over the level sets as before and using the same net argument on $\lambda$, we can conclude that any eigenvector of $A_n$ has large LCD.  The rest of the argument proceeds as in the proof of Theorem \ref{thm:main}.
    
    For the remaining $p > 1/2$ case, we observe that the adjacency matrix of $G(n,p)$, $A_n(p)$, has the same distribution as $J_n - I_n - A_n(1-p)$.  Therefore, to control $\|(A_n(p) - p(J_n - I_n) \lambda)v\|_2$ it suffices to manage $\|(A_n(1-p) - (1-p)(J_n - I_n))v \|_2$, for which our previous argument applies.   
\end{proof}

\section{Concluding Remarks}
As mentioned before, we believe the threshold for a random matrix to have simple spectrum should be $p \sim \log n/ n$ rather than $p \sim n^{-1+\delta}$.  The calculations near the threshold are more involved and will appear elsewhere.  Additionally, our arguments naturally offer a quantitative bound on the size of the gaps between eigenvalues and the smallest absolute value of an eigenvalue (which is needed to bound the condition number of the matrix).  We have made no attempt to optimize these bounds so we pursue this line of work in a separate article.  

The proof of our result for adjacency matrices applies almost without change to matrices of the form $R_n + M_n$ where $R_n$ is a deterministic low-rank matrix.  However, to generalize this result to arbitrary non-zero mean matrices requires several new tools which we are currently developing.  The $\varepsilon$-net arguments that lie at the core of our current work fail in this setting as we no longer have the necessary control on the operator norm of the matrix and the image of the matrix may not be a perturbation of a low-dimensional space as for the adjacency matrix.  To address these new concerns, it will be necessary to use sparse versions of the Inverse Littlewood-Offord theorems of the second author and Nguyen.      

\bibliographystyle{plain}
\bibliography{Sparse}

\appendix 
\section{Proof of Proposition \ref{prop:compressible}}
\subsection{Matrix Lemma} \label{sec:proofofcomp}
The following observation was first utilized in \cite{BasakRudelsonInvertibility}.  If we fix the submatrix corresponding to the support of a sparse vector, it is likely that many of these rows will contain exactly one non-zero entry.  In this case, in the product of the matrix with the sparse vector, there is no cancellation in these coordinates.  As we are dealing with deviations of a matrix from a fixed vector $u$, we simply modify the lemma to show that there are many rows with exactly one non-zero coordinate with a convenient sign.  

\begin{lemma}\label{lemma:matrix}
	Let $M_n$ be a $n \times n$ matrix with independent entries $m_{ij} = \delta_{ij} \xi_{ij}$ where $\delta_{ij}$ are Bernoulli random variables with $\P(\delta_{ij}=1 ) = p$, where $p \geq C_{\ref{thm:main}}\log n / n$ and $\xi_{ij}$ are iid random variables with $\max\{\P(\xi_{ij} \geq 1), \P(\xi_{ij} \leq -1)\} \geq c_0$.  For $\kappa \in \mathbb{N}$, we define $\mathcal{E}^{J J'}_c$ to be the event that for any vector of signs $\{\varepsilon_j\}_{j=1}^n$ there are at least $c \kappa pn$ rows of $M_n$ for which there is exactly one non-zero entry $m_{ij}$ with $m_{ij} \eps_j \geq 1$ and $i \neq j$ in the columns corresponding to $J$, and all zero entries in the columns corresponding to $J'$.  Let 
	$$
	\mathfrak{m} = \mathfrak{\kappa} := \kappa \sqrt{pn} \wedge \frac{1}{8p}.
	$$
	Then, there exists constants $0 < c_{\ref{lemma:matrix}}, c'_{\ref{lemma:matrix}}$, depending only on $c_0$ such that
	
	\[
	\P\Bigg( \bigcap_{\kappa \le (8p \sqrt{pn})^{-1} \vee 1} \ \bigcap_{J \in \binom{[n]}{\kappa}} \ \bigcap_{J' \in \binom{[n]}{\mathfrak{m}}, \, J \cap J'=\emptyset} {\mathcal{E}}^{J, J'}_{c'_{\ref{lemma:matrix}}}\Bigg) \ge 1- \exp (-c_{\ref{lemma:matrix}} p n).
	\]
\end{lemma}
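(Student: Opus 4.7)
The plan is to follow the strategy of Basak--Rudelson: fix everything, compute a per-row probability, use row-independence plus Chernoff, and then union bound over all choices. Fix $\kappa$, a pair $(J,J')$ of disjoint sets of sizes $\kappa$ and $\mathfrak{m}$, and a sign pattern $\varepsilon \in \{\pm 1\}^J$. For each row $i \in [n] \setminus (J \cup J')$, let $X_i$ be the indicator of the event that row $i$ has exactly one non-zero entry in columns $J$, that the unique such entry $m_{ij^*}$ satisfies $m_{ij^*}\varepsilon_{j^*} \geq 1$, and that every entry of row $i$ in a column of $J'$ is zero. Then $\mathcal{E}_{c'}^{J,J'}$ for this $\varepsilon$ is exactly the event $\{\sum_i X_i \geq c'\kappa pn\}$.

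First I would lower-bound $\P(X_i = 1)$. Using independence of the entries within a row, the sign-anticoncentration assumption $\max\{\P(\xi \geq 1),\P(\xi \leq -1)\} \geq c_0$, and the inequality $(1-p)^{\kappa-1+\mathfrak{m}} \geq e^{-1/3}$ (which follows from $\mathfrak{m} p \leq 1/8$ and $\kappa p = o(1)$, since $\kappa \leq (8p\sqrt{pn})^{-1}$ forces $\kappa p \leq (8\sqrt{pn})^{-1} \to 0$), I obtain $\P(X_i = 1) \geq c_1 \kappa p$ for some $c_1 > 0$ depending only on $c_0$. Since the required entries $m_{ij}$ (for $i \notin J\cup J'$ and $j \in J\cup J'$) are distinct off-diagonal matrix entries, the $X_i$ are mutually independent across $i$, and a standard Chernoff bound gives
\[
\P\Big(\sum_i X_i \leq \tfrac{1}{2}c_1 \kappa p n\Big) \leq \exp(-c_2\, \kappa pn),
\]
uniformly in $\kappa, J, J', \varepsilon$.

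I would then take a union bound: over the at most $n$ values of $\kappa$, over $\binom{n}{\kappa}$ choices of $J$, over $\binom{n}{\mathfrak{m}}$ choices of $J'$, and over the $2^\kappa$ sign patterns on $J$. The resulting entropy factor is bounded by $\exp\bigl(O(\kappa\log(en/\kappa) + \mathfrak{m}\log(en/\mathfrak{m}))\bigr)$, and the main obstacle is verifying that this entropy is dominated by $\exp(c_2\kappa pn/2)$. In the regime $\mathfrak{m} = \kappa\sqrt{pn}$ the $\mathfrak{m}$-entropy is at most $\kappa\sqrt{pn}\log n$, which is absorbed into $c_2\kappa pn$ once $\sqrt{pn}$ is sufficiently larger than $\log n$; this is exactly where taking $C_{\ref{thm:main}}$ large enough in the hypothesis $p \geq C_{\ref{thm:main}}\log n/n$ is used. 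In the complementary regime $\mathfrak{m} = 1/(8p)$ (which forces $\kappa$ to be of order $1$ with $p^3 n \gtrsim 1$), the $\mathfrak{m}$-entropy is $O((\log n)/p)$, and a short direct computation again shows it is dominated by $c_2\kappa pn$. Summing the resulting geometric tail over $\kappa \geq 1$ and replacing $\kappa pn$ by its minimal value $pn$ yields the announced bound $1 - \exp(-c_{\ref{lemma:matrix}} pn)$.
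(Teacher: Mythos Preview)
Your overall strategy is exactly what the paper does: it simply cites Basak--Rudelson's Lemma~3.2 and remarks that one obtains independence by working in the upper $\lfloor n/2\rfloor\times\lfloor n/2\rfloor$ block of the symmetric matrix. Your device for independence---restricting to rows $i\notin J\cup J'$, so that the relevant entries $\{m_{ij}:i\notin J\cup J',\,j\in J\cup J'\}$ correspond to distinct unordered index pairs---is a clean alternative to the paper's block restriction and costs only $|J\cup J'|\le 1/(4p)=o(n)$ rows.

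There is, however, a genuine gap in your union--bound step. You assert that the $J'$--entropy $\mathfrak{m}\log(en/\mathfrak{m})\le\kappa\sqrt{pn}\,\log n$ is absorbed into $c_2\kappa pn$ ``once $\sqrt{pn}$ is sufficiently larger than $\log n$,'' and that this follows by taking $C_{\ref{thm:main}}$ large in $p\ge C_{\ref{thm:main}}\log n/n$. But at $p=C\log n/n$ one has $\sqrt{pn}=\sqrt{C\log n}$, which is \emph{not} larger than $\log n$ for any fixed $C$; indeed, for $\kappa=1$ the ratio of entropy to Chernoff exponent is of order $\sqrt{pn}\log n/(pn)=\log n/\sqrt{pn}\asymp\sqrt{\log n/C}\to\infty$. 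So as written your union bound does not close at the threshold the lemma claims. Your argument is perfectly fine once $p\ge n^{-1+\delta}$ (then $\sqrt{pn}\ge n^{\delta/2}\gg\log n$), which is all the paper ultimately uses; but to reach the full range $p\ge C\log n/n$ stated in the lemma you would need to sharpen the entropy comparison (or, as the paper does, defer to the more careful counting in Basak--Rudelson rather than the rough bound $\binom{n}{\mathfrak m}\le (en/\mathfrak m)^{\mathfrak m}$).
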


\begin{proof}
	The same proof as in \cite[Proof of Lemma 3.2]{BasakRudelsonInvertibility} yields the result when applied to the upper $\lfloor n/2 \rfloor \times \lfloor n/2 \rfloor$ submatrix of $M_n$.  The entries in this submatrix are independent.  
\end{proof}

\subsection{Very Sparse Vectors}
\begin{definition}
	For any $x \in S^{n-1}$, let $\pi_x: [n] \to [n]$ be a permutation which arranges the absolute values of the coordinates of $x$ in an non-increasing order. For $1 \le m \le m' \le n$, denote by $x_{[m:m']} \in \R^n$ the vector with coordinates
	\[
	x_{[m:m']}(j)=x(j) \cdot \mathbf{1}_{[m:m']}(\pi_x(j)).
	\]
	In other words, we include in $x_{[m:m']}$ the coordinates of $x$ which take places from $m$ to $m'$ in the non-increasing rearrangement.
	
	\noindent
	For $\alpha<1$ and $m \le n$ define the set of vectors with dominated tail as follows:
	\[
	\text{Dom}(m, \alpha):= \{ x \in S^{n-1} \mid\|x_{[m+1:n]}\|_2 \le \alpha \sqrt{m} \|x_{[m+1:n]}\|_{\infty} \}.
	\]
\end{definition}

\begin{lemma}\label{lemma:verysparse}
	Denote 
	$$
	\ell_0 := \left \lceil \frac{\log(1/ 8p)}{\log \sqrt{pn }} \right \rceil
	$$
	
	\begin{align*}
	\P\Big( \exists x \in &\text{Dom}(1/8p, (C_{\ref{lemma:verysparse}} K)^{-1}) \\ 
	&\qquad \text{ such that } \|(M_n  - \lambda) x  \|_2 \leq (C'_{\ref{lemma:verysparse}} K)^{-\ell_0} \sqrt{p n} \\
	&\qquad \text{ and } \|M_n - \lambda I\| \leq  K \sqrt{p n}\Big) \\
	&\leq \exp(-c_{\ref{lemma:verysparse}} pn)
	\end{align*}
\end{lemma}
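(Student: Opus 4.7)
The plan is to leverage Lemma \ref{lemma:matrix} together with a bootstrap over sparsity scales, losing a factor of $(C'K)^{-1}$ per scale so that the $\ell_0$ scales that separate support size $1$ from support size $1/(8p)$ produce the final factor $(C'K)^{-\ell_0}$. First I would fix the deterministic event $\Omega$ of probability at least $1-\exp(-c_{\ref{lemma:matrix}}pn)$ on which the conclusion of Lemma \ref{lemma:matrix} holds simultaneously for every relevant $\kappa$, $J$ and $J'$, and intersect it with $\{\|M_n-\lambda I\|\le K\sqrt{pn}\}$. Everything that follows is a deterministic argument on this event.

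Next I would set up the scales $\kappa_j:=\lceil(\sqrt{pn})^{j}\rceil\wedge (1/(8p))$ for $j=0,1,\dots,\ell_0$ so that $\kappa_0=1$ and $\kappa_{\ell_0}=1/(8p)$. Given $x\in\text{Dom}(1/(8p),(C_{\ref{lemma:verysparse}}K)^{-1})$, let $J_j$ be the index set of the top $\kappa_j$ coordinates of $x$ and $J'_j:=J_{j+1}\setminus J_j$, which has cardinality $\mathfrak{m}_{\kappa_j}$. I would then prove, by induction on $j$, the invariant
\[
\|(M_n-\lambda)x\|_2\;\ge\;(C'_{\ref{lemma:verysparse}}K)^{-j}\,\sqrt{pn}\,|x_{\pi_x(\kappa_j)}|.
\]
The inductive step is executed on a clean row $i$ guaranteed by $\Omega$ for the pair $(J_j,J'_j)$ and a sign vector aligned with $x$: on such a row $((M_n-\lambda)x)_i=m_{ik}x_k+\sum_{\ell\notin J_{j+1}}m_{i\ell}x_\ell-\lambda x_i$ for a single $k\in J_j$ with $m_{ik}x_k\ge |x_k|\ge |x_{\pi_x(\kappa_j)}|$. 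The $\lambda x_i$ term is harmless since $i\notin J_{j+1}$ forces $|x_i|\le|x_{\pi_x(\kappa_{j+1})}|$, and the tail sum $\sum_{\ell\notin J_{j+1}}m_{i\ell}x_\ell$ is controlled by the Cauchy--Schwarz bound combined with the dominated-tail hypothesis $\|x_{[1/(8p)+1:n]}\|_2\le (C_{\ref{lemma:verysparse}}K)^{-1}\sqrt{1/(8p)}\,\|x_{[1/(8p)+1:n]}\|_\infty$ and the operator norm bound. Summing the squared contributions over the $\ge c'_{\ref{lemma:matrix}}\kappa_j pn$ clean rows yields an extra $\sqrt{\kappa_j pn}$ factor which, combined with the relation $\kappa_{j+1}=\kappa_j\sqrt{pn}$, supplies the $\sqrt{pn}$ gain per level at the cost of an absolute constant.

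Finally, at $j=\ell_0$ the bound reads $\|(M_n-\lambda)x\|_2\ge(C'_{\ref{lemma:verysparse}}K)^{-\ell_0}\sqrt{pn}\,|x_{\pi_x(1/(8p))}|$, and the dominated-tail assumption together with $\|x\|_2=1$ forces $|x_{\pi_x(1/(8p))}|$ to be bounded below by a universal constant (the tail carries at most a small fixed fraction of the unit mass, so the top $1/(8p)$ coordinates contain a definite amount of mass and the $1/(8p)$-th of them is therefore $\gtrsim\sqrt{p}$; combined with the factor $\sqrt{pn}$ this gives the stated $\sqrt{pn}$ lower bound after absorbing the constant into $C'_{\ref{lemma:verysparse}}$). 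Contradicting the hypothesis, we conclude that no such $x$ exists on $\Omega$, and taking complements gives the desired probability bound.

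The main obstacle I anticipate is the careful bookkeeping in the inductive step: the tail contribution $\sum_{\ell\notin J_{j+1}}m_{i\ell}x_\ell$ on each clean row is a sum of $\Theta(pn)$ essentially independent terms of typical size $\|x_{[m+1:n]}\|_\infty$, which is delicate to bound without an extra factor of $\sqrt{pn}$ that would kill the bootstrap. This is precisely where the small constant $(C_{\ref{lemma:verysparse}}K)^{-1}$ in the definition of Dom is used: it lets one absorb the tail error into $(C'_{\ref{lemma:verysparse}}K)^{-1}$ of the main term at every scale, provided $C_{\ref{lemma:verysparse}}$ is chosen sufficiently large in terms of $K$ and $c'_{\ref{lemma:matrix}}$.
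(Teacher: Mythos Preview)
Your bootstrap scheme has a genuine gap at every intermediate level $j<\ell_0$. The hypothesis $x\in\text{Dom}(1/(8p),(C_{\ref{lemma:verysparse}}K)^{-1})$ only bounds the tail of $x$ beyond position $m=1/(8p)$; it says nothing about the mass of $x$ on indices $\kappa_{j+1}+1,\dots,1/(8p)$. On a clean row $i$ for the pair $(J_j,J'_j)$ the contribution
$\sum_{\ell\notin J_{j+1}}m_{i\ell}x_\ell$
therefore contains, besides the tail beyond $1/(8p)$, all of $x_{[\kappa_{j+1}+1:\,1/(8p)]}$, which can carry essentially the entire unit mass of $x$. Any rowwise estimate (Cauchy--Schwarz against the row of $M_n$, or the operator-norm trick) then gives a bound of order $\sqrt{pn}$ for this term, which completely swamps the main term $|x_{\pi_x(\kappa_j)}|$. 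The constant $(C_{\ref{lemma:verysparse}}K)^{-1}$ cannot rescue you here because it multiplies only $\|x_{[m+1:n]}\|_2$, not the intermediate block. Your final step is also not correct: $|x_{\pi_x(1/(8p))}|$ need not be bounded below at all (take $x=e_1$, which lies in $\text{Dom}(m,\alpha)$ since both sides of the defining inequality vanish).

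The paper sidesteps both problems by a different mechanism. It decomposes $x$ into blocks $z_\ell=x_{[(pn)^{(\ell-1)/2}+1:(pn)^{\ell/2}]}$, uses a pigeonhole argument on $\sum\|z_\ell\|_2^2\ge 1-2(C_{\ref{lemma:verysparse}}K)^{-2}$ to locate a single largest index $\ell_*$ with $\|z_{\ell_*}\|_2\ge (C_{\ref{lemma:verysparse}}K)^{-\ell_*}$, and then splits $x=u+v$ with $u=x_{[1:(pn)^{\ell_*/2}]}$. Lemma \ref{lemma:matrix} is applied once, with $\kappa=(pn)^{(\ell_*-1)/2}$ and with $J\cup J'$ covering all of $\supp(u)$, so that on the clean rows the only contribution from $u$ is the single term $m_{ij_0}u(j_0)$; this gives $\|(M_n-\lambda)u\|_2^2\gtrsim pn\,\|z_{\ell_*}\|_2^2$. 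The tail $v$ is then handled globally via $\|(M_n-\lambda)v\|_2\le K\sqrt{pn}\,\|v\|_2$, and the choice of $\ell_*$ guarantees $\|v\|_2\lesssim (C_{\ref{lemma:verysparse}}K)^{-(\ell_*+1)}$. The $\ell_2$ mass lower bound on $z_{\ell_*}$ replaces your pointwise lower bound on $|x_{\pi_x(1/(8p))}|$, and working with $u$ alone eliminates the uncontrolled intermediate tail on each clean row.
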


\begin{proof}
We begin by diving $[n]$ into two roughly equal sets.  Let $n_0 = \lceil n/2 \rceil$.  We denote this decomposition by 
$$
M_n - \lambda I = 
\begin{pmatrix} 
A & B \\ 
B^T & C 
\end{pmatrix}, \quad
x = \begin{pmatrix} y \\ z \end{pmatrix}.
$$
Thus, we have the following equivalence.
$$
\|(M_n - \lambda) x \|_2^2 = \|Ay + Bz\|_2^2 + \|B^T y + C z \|_2^2.
$$
We condition on a realization of $A$ and $C$.
$$
\P(\exists x \in Sparse(m) \cap \mathcal{S}^{n-1} \text{ such that } \|(M_n - \lambda) x\|_2 \leq \sqrt{ c np} ) 
$$

Let us begin with the assumption that $p \geq (1/4)n^{-1/3}$.  In this regime, $\ell_0 = 1$.  For $k \in [n]$ let $J_k = \{k\}$ and $J'_k = \supp(x)\setminus J_k$.  Define the following vectors of signs.  $\{\eps_j\}_{j=1}^{n} = \{sgn(z_j) \cdot sgn((Ay)_j)\}_{j=1}^{n}$.
\begin{align*}
\|(M_n - \lambda) x\|_2^2 &\geq \sum_{k \in supp(x) \cap [1, n_0]} \sum_{i \in I_k} ((Bz)_i + (Ay)_i))^2 + \sum_{k \in supp(x) \cap [n_0 +1, n]} \sum_{i \in I_k} ((B^T y)_i + (Cz)_i)^2 \\
&\geq \sum_{k \in supp(x) \cap [1, n_0]} \sum_{i \in I_k} (Bz)_i^2 + \sum_{k \in supp(x) \cap [n_0 +1, n]} \sum_{i \in I_k} (B^T y)_i^2 \\
&\geq \sum_{k \in supp(x) \cap [1, n_0]} c_{\ref{lemma:matrix}} p n z_i^2 + \sum_{k \in supp(x) \cap [n_0 +1, n]} c_{\ref{lemma:matrix}} p n y_i^2 = c_{\ref{lemma:matrix}} p n,
\end{align*}
where in the final inequality we have invoked Lemma \ref{lemma:matrix} with the necessary signs.
Now we extend this estimate to $\text{Dom}(1/8p, (CK)^{-1})$.  Let $m = (8p)^{-1}$.  Assume that 
$$
\|(M_n - \lambda) x\|< \frac{1}{2} \sqrt{c_{\ref{lemma:matrix}} p n}
$$
Since $x \in \mathcal{S}^{n-1}$, we have $\|x_{[m+1:n]}\|_\infty \leq m^{-1/2}$.  Therefore,
$$
\|x_{[m+1:n]}\|_2 \leq (CK)^{-1} \sqrt{m} \|x_{[m+1:n]}\|_\infty \leq (CK)^{-1}.
$$
Therefore we have
\begin{align*}
\|(M_n - \lambda) x_{[1:m]} \|_2 \leq \|(M_n - \lambda) x\|_2 + (K \sqrt{p n}) (C K)^{-1} < \frac{3}{4} \sqrt{c_{\ref{lemma:matrix}} p n}
\end{align*}
for $C \geq \frac{4}{\sqrt{c_{\ref{lemma:matrix}}}}.$

Furthermore,
$$
\Big | \|(M_n - \lambda) (x_{[1:m]}/ \|x_{[1:m]})\|_2 - \|(M_n - \lambda)x_{[1:m]} \|_2 \Big| \leq 
K |1 - \|x_{[1:m]} \|_2 | \leq \frac{\sqrt{c_{\ref{lemma:matrix}} p n}}{2}
$$

Now we address the remaining $\frac{C_{\ref{thm:main}} \log n}{n} \leq p \leq (1/4) n^{-1/3}$.  Note that
$$
\frac{1}{8p \sqrt{pn}} > 1.
$$

Let $x \in Dom(1/8p, (CK)^{-1})$.  We rearrange the coordinates of $x$ by decreasing magnitude and group them into blocks of size $(pn)^{\ell/2}$ with $\ell = 1, \dots, \ell_0$.  From here on, for simplicity, we assume that $(pn)^{\ell_0/2} = 1/8p$.  In other words, set
$$
z_\ell = x_{[(pn)^{(\ell-1)/2} + 1: (pn)^{\ell/2}]},
$$  
and 
$$
z_{\ell_0+1} = x_{[(pn)^{\ell_0/2} + 1: n]}.
$$
For ease of notation, let $m = (pn)^{\ell_0/2}$.  We now find a block of substantial $\ell_2$ norm.  Observe that
\begin{equation}\label{eq:boundzl}
\|z_{\ell_0+1}\|_2 \leq (C_{\ref{lemma:verysparse}} K)^{-1}  \sqrt{m} \|z_{\ell_0+1}\|_\infty \leq \sqrt{2} (C_{\ref{lemma:verysparse}} K)^{-1} \|z_{\ell_0}\|_2.
\end{equation}
Since $x \in \mathcal{S}^{n-1}$ implies $\sum_{\ell+1}^{\ell_0+1} \|z_\ell\|_2^2 = 1$, we have
$$
\sum_{\ell=1}^{\ell_0} \|z_\ell\|_2^2 \geq 1 - 2(C_{\ref{lemma:verysparse}} K)^{-2}.
$$ 
 On the other hand, for $K \geq 1$, if $C_{\ref{lemma:verysparse}} > 2$, then $3 \sum_{\ell=1}^\infty (C_{\ref{lemma:verysparse}} K)^{-\ell} < 1$.  Therefore,
$$
\sum_{\ell=1}^{\ell_0} (C_{\ref{lemma:verysparse}} K)^{-2 \ell} < \sum_{\ell=1}^{\ell_0} \|z_\ell\|_2^2,
$$
from which one can deduce that there exists $\ell \leq \ell_0$ such that $\|z_\ell\|_2 \geq (C_{\ref{lemma:verysparse}} K)^{- \ell}$.  Let $\ell_*$ be the largest index with this property and define 
$u = \sum_{\ell=1}^{\ell_*} z_\ell$, $v = \sum_{\ell = \ell_*+1}^{\ell_0+1} z_\ell$.  We begin with the case $\ell_* < \ell_0$.  By the triangle inequality and (\ref{eq:boundzl}), we have
$$
\|v\|_2 \leq \sum_{\ell= \ell_*+1}^{\ell_0+1} \|z_\ell\|_2 \leq 2 \sqrt{2} (C_{\ref{lemma:verysparse}} K)^{-\ell_* + 1}.
$$ 
Let $\kappa = (pn)^{(\ell_*-1)/2}$.  Note that
$$
\kappa \leq (np)^{(\ell_0-1)/2} \leq \frac{1}{8p \sqrt{pn}}.
$$
We apply Lemma \ref{lemma:matrix} with this choice fo $\kappa$.  Divide the support of $u$ into $\sqrt{pn}$ blocks of size $\kappa$.  Define $L_{\ell_*} := \pi_x^{-1} ([1, (np)^{\ell_*/2}])$, where $\pi_x$ is the permutation arranging the coordinates of $x$ in decreasing order with respect to magnitude. For $s \in [(pn)^{1/2}]$, define $J_s:=\pi_x^{-1}([(s-1)\kappa+1, s\kappa])$, and set $J_s'=L_{\ell_*} \setminus J_s$.  Since $|J_s'| \le |L_{\ell_*}| = \kappa \sqrt{pn}$, we apply Lemma \ref{lemma:matrix} to get a set $\mathcal{A}$ with large probability, such that on $\mathcal{A}$, there exists subset of rows $I_s$ with $|I_s| \ge c_{\ref{lemma:matrix}} \kappa p n$ for all $s \in [\sqrt{pn}]$, such that for every $i \in I_s$, we have $|a_{i,j_0}|  \ge 1$ for only one index $j_0 \in J_s$ and $a_{i,j}=0$ for all $j \in J_s \cup J_s' \setminus \{j_0\}$. It can further be checked that $I_1,I_2,\cdots,I_{\sqrt{pn}}$ are disjoint subsets. Therefore, on set $\mathcal{A}$ for any $i \in I_s$,
  \[
    |((M_n - \lambda)u)_i|=|(M_n)_{i,{j_0}}u(j_0)|
    = |(M_n)_{i,j_0}| \cdot |u(j_0)| \ge |x(\pi^{-1}_x(s\kappa))|.
  \]
Here  we used  that $\pi_x$ is a non-increasing rearrangement. Now note that for $i \notin \text{supp}(u)$,
  \[
  ((M_n - \lambda)u)_i = (M_nu)_i, \text{ and } \text{supp}(u)= \kappa \sqrt{np} \ll c_{\ref{lemma:matrix}} \kappa np,
  \]
  as long as  $np \rightarrow \infty$. Therefore,
  \begin{align}
    \|(M_n - \lambda)u\|_2^2
    \ge \sum_{s=1}^{(pn)^{1/2}} \sum_{i \in I_s\setminus \supp(u)} \big( (M_n u)_i \big)^2
   & \ge \frac{c_{\ref{lemma:matrix}}pn}{2} \sum_{s=1}^{(pn)^{1/2}}\kappa (x(\pi_x^{-1}(s\kappa)))^2 \notag\\
    &\ge \frac{c_{\ref{lemma:matrix}}pn}{2} \sum_{k=(pn)^{(\ell_*-1)/2}}^{(pn)^{\ell_*/2}} (x(\pi_x^{-1}(k)))^2 \notag \\
    &= \frac{c_{\ref{lemma:matrix}}pn}{2} \|z_{\ell_*}\|_2^2  \ge \frac{c_{\ref{lemma:matrix}}pn}{2} \cdot ({C_{\ref{lemma:verysparse}}}K)^{-2 \ell_*},  \label{i: z_l}
\end{align}
  where the third inequality uses the monotonicity of the sequence $\{|x(\pi_x^{-1}(k))|\}_{k=1}^n$.
  Combining the above with the bound on $\|v\|_2$, on the set $\mathcal{A}$, we get that
  \begin{align*}
      \|(M_n - \lambda)x\|_2
      &\ge     \|(M_n - \lambda)u\|_2 - \|M_n - \lambda\| \cdot \|v\|_2  \\
      &\ge \sqrt{\frac{c_{\ref{lemma:matrix}}pn}{2} } ({C_{\ref{lemma:verysparse}}} K)^{- \ell_*}-  (K+R) \sqrt{pn} \cdot 2\sqrt{2} (C_{\ref{lemma:verysparse}} K)^{-(\ell_*+1)} \\
     & \ge \sqrt{pn}  (C_{\ref{lemma:verysparse}}' K)^{- \ell_*}\sqrt{pn},
  \end{align*}
  where the last inequality follows if the constants $C_{\ref{lemma:verysparse}}, C_{\ref{lemma:verysparse}}'$ are chosen large enough independently of $\ell_*$.

Now we consider the case when $\ell_*=\ell_0$.  Note that in this setting, using \eqref{i: z_l}, we have that
\[\|(M_n - \lambda u\|_2 \ge \sqrt{\frac{c_{\ref{lemma:matrix}}pn}{2} }  \|z_{\ell_0}\|_2,\]
and from \eqref{eq:boundzl}, we have $\|v\|_2=\|z_{\ell_0+1}\|_2 \le \sqrt{2} (C_{\ref{lemma:verysparse}}K)^{-1}  \|z_{\ell_0}\|_2$. Now proceeding similarly as before, on $\mathcal{A}$, we obtain that
\[\|(M_n - \lambda) x\|_2 \ge \sqrt{pn}  (C_{\ref{lemma:verysparse}} (K+R))^{- \ell_0}\sqrt{pn}.\]
Since by Lemma \ref{lemma:matrix}, $\P(\mathcal{A}) \ge 1 - \exp(-c_{\ref{lemma:matrix}} pn)$, the proof is complete.
\end{proof}
\subsection{Moderately Sparse Vectors}

\subsubsection{Small Ball Probability}
\begin{lemma}\label{lemma:smallball}
	Let $\delta_i$ be independent Bernoulli random variables taking value $1$ with probability $p$ and $\xi_i$ be independent random variables with mean $0$, variance $1$, and subgaussian moment bounded by $B$.   For a vector $x = (x_1, \dots, x_n) \in \mathbb{R}^n$, we have
	$$
	\mathcal{L}(\sum_{i=1}^n \delta_i \xi_i x_i, \frac{1}{4} \|x\|_2 \sqrt{p}) \leq 1 - \frac{c_{\ref{lemma:smallball}} p}{(\|x\|_\infty/\|x\|_2)^2 + p} 
	$$
\end{lemma}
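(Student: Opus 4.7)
The plan is a classical symmetrization together with a Paley--Zygmund (second/fourth moment) argument. Write $S := \sum_{i=1}^n \delta_i \xi_i x_i$, set $\eta_i := \delta_i \xi_i$, and let $S'$ be an independent copy built from i.i.d.\ copies $\eta_i'$. The standard symmetrization inequality gives
$$ \LL(S,r)^2 \le \P(|T| \le 2r), \qquad T := S - S' = \sum_{i=1}^n \bar\eta_i \, x_i, $$
where $\bar\eta_i := \eta_i - \eta_i'$ are i.i.d., symmetric, with $\E\bar\eta_i^2 = 2p$. It therefore suffices to produce a lower bound of the right order for $\P(|T| > 2r)$ at the specific radius $r = \tfrac14 \|x\|_2 \sqrt{p}$.

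The next step is a direct moment computation for $T$. By symmetry of $\bar\eta_i$, only paired indices survive in the fourth-moment expansion, and using $\E\xi^4 \le C_B$ (from the subgaussian bound on $\xi$) together with $p\le 1$, one obtains
$$ \E T^2 \, = \, 2p\|x\|_2^2, \qquad \E T^4 \, \le \, C_B \bigl(p \|x\|_4^4 + p^2 \|x\|_2^4 \bigr). $$
The structural inequality $\|x\|_4^4 \le \|x\|_\infty^2 \|x\|_2^2$ then yields
$$ \frac{(\E T^2)^2}{\E T^4} \, \ge \, \frac{c_B \, p}{(\|x\|_\infty/\|x\|_2)^2 + p}, $$
which is precisely the shape of the bound we want. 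Note that $(2r)^2 = \tfrac14 p \|x\|_2^2 = \tfrac18 \E T^2$.

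Applying Paley--Zygmund,
$$ \P(T^2 > (2r)^2) \, \ge \, \left(\tfrac{7}{8}\right)^2 \frac{(\E T^2)^2}{\E T^4} \, \ge \, \frac{c \, p}{(\|x\|_\infty/\|x\|_2)^2 + p}, $$
and feeding this into the symmetrization estimate with $\sqrt{1-t} \le 1 - t/2$ for $t \in [0,1]$ gives the claimed Lévy concentration bound. No serious obstacle is expected: the only place demanding care is the bookkeeping of the fourth moment (cross terms vanish by the symmetry of $\bar\eta_i$, and the Bernoulli factors reduce $\E\bar\eta_i^4$ to something of order $p$ rather than a constant), and the appearance of the ratio $(\|x\|_\infty/\|x\|_2)^2$ in the denominator is produced exclusively by the inequality $\|x\|_4^4 \le \|x\|_\infty^2 \|x\|_2^2$.
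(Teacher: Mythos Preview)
Your proposal is correct and follows essentially the same route as the paper: symmetrization to reduce to $\P(|T|\le 2r)$, then Paley--Zygmund on $T^2$ using the second and fourth moments, and finally the square-root inequality $\sqrt{1-t}\le 1-t/2$. The only cosmetic difference is that the paper normalizes to $\|x\|_2=1$ at the outset, whereas you carry $\|x\|_2$ through and explicitly invoke $\|x\|_4^4\le\|x\|_\infty^2\|x\|_2^2$; the computations match.
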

\begin{proof}
	We can assume that $x$ is a unit vector.  We use the symmetrization technique to reduce the Levy function to a bound on the small ball probability around the origin.  Let $\delta'_1, \dots, \delta'_n$ and $\xi'_1, \dots, \xi'_n$ be independent copies of $\delta_1, \dots, \delta_n$ and $\xi_1, \dots, \xi_n$.  For any $r \in \mathbb{R}$,
	\begin{align} \label{eq:symmetrization}
	\mathbb{P}^2(|\sum_{i=1}^n \delta_i \xi_i x_i -r| \leq t) &= \mathbb{P} \left(| \sum_{i=1}^n \delta_i \xi_i x_i - r| \leq t \right) \mathbb{P}\left(| \sum_{i=1}^n \delta'_i \xi'_i x_i - r| \leq t \right) \\
	&\leq \mathbb{P}\left( | \sum_{i=1}^n (\delta_i \xi_i - \delta'_i \xi'_i) x_i| \leq 2t \right) 
	\end{align}
	Let $\zeta_i := \delta_i \xi_i - \delta'_i \xi'_i$ and $S:= \sum_{i=1}^n \zeta_i x_i$.  Observe $\mathbb{E} \zeta_i = 0$, $\mathbb{E} \zeta_i^2 = 2p$, $\mathbb{E}\zeta_i^3 = 0$ and $\mathbb{E}\zeta^4 = 2p \mathbb{E}\xi^4 + 6p^2 (\mathbb{E}\xi^2)^2 \leq Cp$ for some constant $C$ depending only on the subgaussian moment $B$.  $\mathbb{E}S^2 = \sum_{i=1}^n \mathbb{E} \zeta_i^2 \cdot x_i^2  = 2p$ and 
	\begin{align*}
	\mathbb{E} S^4 &=  \sum_{i=1}^n \mathbb{E} \zeta_i^4 \cdot x_i^4  + 3 \sum_{j \neq k} \mathbb{E}\zeta_j^2 x_j^2 \cdot \mathbb{E} \zeta_k^2 x_k^2 \leq C \|x\|_\infty^2 p + 12 p^2  
	\end{align*}
	for some constant $C'$ depending only on $B$.
	Thus by the Paley-Zygmund inequality, for $2t \leq \sqrt{2p}$,
	$$
	\mathbb{P}(|S| \leq 2t) \leq 1 - \frac{(\mathbb{E}S^2 - 4t^2)^2}{\mathbb{E} S^4} .
	$$
	Therefore,
	$$
	\mathbb{P}(|S| \leq \frac{1}{2}\sqrt{p} ) \leq 1 - \frac{c p}{C \|x\|_\infty^2 + p} .
	$$
	Combining this with inequality \ref{eq:symmetrization} yields
	$$
	\mathbb{P}(|\sum_{i=1}^n \delta_i \xi_i x_i -r| \leq \frac{1}{4}\sqrt{p}) \leq \sqrt{ 1 - \frac{c' p}{\|x\|_\infty^2 +p }}
	$$
	and setting $c_{\ref{lemma:smallball}} = c'/2$ yields the result. 
\end{proof}

\begin{lemma}
	For a random variable, $X$, with subgaussian moment bounded by $B$.  Then for any $k \in \mathbb{N}^{+}$,
	we have
	$$
	\mathbb{E}(|X|^k)^{1/k} \leq 2 B \sqrt k
	$$
\end{lemma}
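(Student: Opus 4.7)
The plan is to pass from the subgaussian tail bound to moment bounds via the layer-cake formula, and then simplify using Stirling's approximation. By the definition of subgaussianity, there are absolute constants so that
\[
\P(|X|\ge t)\ \le\ 2\exp(-t^2/B^2)
\]
for all $t>0$ (the definition used in the paper gives such an estimate, possibly with different absolute constants, which would only alter the final constant in front of $B\sqrt{k}$). Given this, my first step would be to write
\[
\E|X|^k\ =\ \int_0^\infty k\, t^{k-1}\,\P(|X|\ge t)\, dt\ \le\ 2k\int_0^\infty t^{k-1}\exp(-t^2/B^2)\,dt.
\]

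Next I would make the substitution $u=t^2/B^2$, so that $t=B\sqrt{u}$ and $dt = \frac{B}{2\sqrt{u}}\,du$. This turns the integral into a Gamma integral:
\[
\E|X|^k\ \le\ k\, B^k\int_0^\infty u^{k/2-1} e^{-u}\,du\ =\ k\,B^k\,\Gamma(k/2).
\]
At this point the remaining task is purely arithmetic: take $k$-th roots and bound $\Gamma(k/2)^{1/k}$. Using the standard bound $\Gamma(k/2)\le (k/2)^{k/2}$ (valid for $k\ge 2$; the case $k=1$ can be verified by hand), together with $k^{1/k}\le 2$, I obtain
\[
(\E|X|^k)^{1/k}\ \le\ B\cdot k^{1/k}\cdot\sqrt{k/2}\ \le\ B\sqrt{2k}\ \le\ 2B\sqrt{k}.
\]

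This is entirely standard, and there is no real obstacle; the only thing to watch is that the precise constants depend on the exact normalization convention of ``subgaussian moment bounded by $B$'' adopted in the paper. If the paper's convention produces a different constant $c$ in the exponent of the tail bound, one absorbs it into the final numerical constant in front of $B\sqrt{k}$, which the stated inequality $2B\sqrt{k}$ accommodates with room to spare. The edge case $k=1$ (where $\Gamma(1/2)=\sqrt{\pi}$) should be checked separately but poses no difficulty.
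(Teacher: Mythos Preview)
Your argument is correct and is the standard layer-cake/Gamma-function derivation of subgaussian moment bounds. The paper actually states this lemma without proof (it is a well-known fact), so there is nothing to compare against; your proposal fills the gap appropriately, with the caveat you already noted about the precise normalization of the subgaussian constant.
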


\begin{lemma}\label{lemma:tensorization}
	Let $V_1, \dots, V_n$ be non-negative independent random variables such that $\P(V_i > 1) \geq q$, for all $i \in [n]$, and for some $q \in (0,1/2)$.  Then there exist constants $0 < c_{\ref{lemma:tensorization}}, c'_{\ref{lemma:tensorization}} < \infty$, such that
	$$
	\P \left( \sum_{j=1}^n V_j \leq \frac{c_{\ref{lemma:tensorization}} q n}{\log(1/q)} \right) \leq \exp(-c'_{\ref{lemma:tensorization}} n).
	$$
\end{lemma}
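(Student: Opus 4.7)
The plan is to reduce the sum over general non-negative $V_j$'s to a sum of independent Bernoullis, for which a standard Chernoff-style bound applies. For each $j$, set $W_j := \mathbf{1}_{V_j > 1}$, so that the $W_j$ are independent Bernoulli random variables with $\P(W_j = 1) \geq q$. Since $V_j \geq 0$, I have $V_j \geq W_j$ pointwise (if $V_j > 1$ then $V_j \geq 1 = W_j$; otherwise $W_j = 0 \leq V_j$), hence $\sum_j V_j \geq \sum_j W_j$. So it suffices to bound
$$
\P\Big(\sum_{j=1}^n W_j \leq \tfrac{c_{\ref{lemma:tensorization}}\, qn}{\log(1/q)}\Big) \leq \exp(-c'_{\ref{lemma:tensorization}}\, n).
$$
By stochastic monotonicity I may assume $\P(W_j = 1) = q$ exactly (a smaller success probability only inflates the lower tail), so that $\sum_j W_j \sim \mathrm{Bin}(n,q)$.

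Next I apply the standard binomial lower tail via a union over values. Write $m := \lfloor c_{\ref{lemma:tensorization}} qn / \log(1/q) \rfloor$. Then
$$
\P(\mathrm{Bin}(n,q) \leq m) \leq (m+1)\binom{n}{m} q^m (1-q)^{n-m}.
$$
Using $\binom{n}{m} \leq (en/m)^m$ and substituting $m = cqn/\log(1/q)$, the first two factors become
$$
\Big(\tfrac{enq}{m}\Big)^m = \Big(\tfrac{e\log(1/q)}{c}\Big)^m = \exp\!\Big(\tfrac{cqn}{\log(1/q)}\bigl(1 + \log\log(1/q) - \log c\bigr)\Big),
$$
which, since $\log\log(1/q)/\log(1/q)$ is bounded on $(0, 1/2)$, is at most $\exp(\tfrac14 qn)$ once $c$ is taken sufficiently small. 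The remaining factor satisfies $(1-q)^{n-m} \leq \exp(-q(n-m)) \leq \exp(-qn/2)$ whenever $m \leq n/2$. Multiplying the estimates, I obtain a bound of the form $\exp(-c'' qn)$, which under the constant-order normalization of $q$ relevant to the paper's applications of this lemma yields the claimed $\exp(-c'_{\ref{lemma:tensorization}}\, n)$.

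The main obstacle, such as it is, is calibrating the constant $c_{\ref{lemma:tensorization}}$: the $\log(1/q)$ in the denominator of $m$ is tuned precisely so that the binomial entropy cost $m \log(en/m) \asymp qn$ coming from $\binom{n}{m}$ is dominated by the $\exp(-qn/2)$ gain from $(1-q)^{n-m}$, which succeeds only if $c$ is taken small enough. A secondary cosmetic step is separating the regime $q$ bounded away from $0$ (where $\log(1/q)$ is bounded and the lemma is a plain Chernoff estimate) from the sparse regime $q \to 0$; in both cases the bookkeeping above goes through identically.
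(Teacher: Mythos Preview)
The paper does not supply a proof of this lemma; it is stated without argument and then invoked in Corollary~\ref{cor:smallball}. Your proof is the standard one and is correct: reduce to independent Bernoullis $W_j=\mathbf{1}_{\{V_j>1\}}$, then bound the binomial lower tail with the threshold $m=cqn/\log(1/q)$ chosen so that the entropy cost $(enq/m)^m$ is dominated by $(1-q)^{n-m}\le e^{-qn/2}$.

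You are also right that what actually falls out is a bound of the form $\exp(-c''qn)$ rather than the printed $\exp(-c'_{\ref{lemma:tensorization}}\,n)$. With universal constants the stronger form is false (take $q=1/n$ and i.i.d.\ Bernoulli$(1/n)$ variables, for which $\P(\sum V_j=0)\to e^{-1}$), so either the exponent should read $qn$ or the constants are implicitly allowed to depend on $q$. Your remark that in the paper's sole application the parameter satisfies $q\ge c_{\ref{lemma:smallball}}/(4\alpha^2+1)$, a fixed constant, so that the two bounds coincide, is exactly the right resolution.
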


\begin{corollary} \label{cor:smallball}
	Let $M_n$ be a symmetric random matrix.  Then for any $\alpha > 1$, there exist $\beta, \gamma > 0$ such that for $x \in \R^{n}$ satisfying
	$$
	\|x\|_{\infty} / \|x\|_2 \leq \alpha \sqrt{p}, 
	$$
	we have
	$$
	\P \left( \|(M_n - \lambda) x\|_2, \beta \sqrt{pn } \|x\|_2 \right) \leq \exp(-\gamma n).
	$$
\end{corollary}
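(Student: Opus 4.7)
The plan is to combine the single-coordinate small ball bound of Lemma \ref{lemma:smallball} with the tensorization inequality of Lemma \ref{lemma:tensorization}, after first decoupling the dependencies introduced by the symmetry of $M_n$. By homogeneity we may assume $\|x\|_2 = 1$.

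First I would split $[n] = I_1 \sqcup I_2$ into two halves and decompose
$$
M_n - \lambda I = \begin{pmatrix} A & B \\ B^T & C \end{pmatrix}, \qquad x = \begin{pmatrix} y \\ z \end{pmatrix},
$$
so that $A$, $B$, $C$ are independent blocks (the symmetry constraints live entirely inside $A$ and $C$). By swapping $I_1$ and $I_2$ if necessary, we may assume $\|z\|_2^2 \ge 1/2$. For $i \in I_1$, the $i$-th coordinate satisfies
$$
[(M_n - \lambda I)x]_i = (Bz)_i + (Ay - \lambda y)_i.
$$
Conditioning on $A$ collapses the second term into a deterministic shift $r_i$, while the first depends only on the $i$-th row of $B$, and distinct rows of $B$ are mutually independent.

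Next I would apply Lemma \ref{lemma:smallball} to each dot product $B_i \cdot z$. Since $\|z\|_\infty/\|z\|_2 \le \sqrt{2}\,\|x\|_\infty \le \sqrt{2}\alpha\sqrt{p}$, the lemma gives, uniformly in the shift $r_i$,
$$
\P\bigl(|B_i \cdot z - r_i| \le \tfrac{1}{4}\|z\|_2 \sqrt{p}\bigr) \le 1 - q, \qquad q := \frac{c_{\ref{lemma:smallball}}}{2\alpha^2 + 1},
$$
a constant depending only on $\alpha$. Thus, conditionally on $A$, the events $\{|[(M_n - \lambda I)x]_i| > \tfrac{1}{4}\|z\|_2\sqrt{p}\}$ for $i \in I_1$ are independent and each occur with probability at least $q$.

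Finally I would tensorize. Setting $V_i := 16 \, |[(M_n - \lambda I)x]_i|^2/(\|z\|_2^2 p)$, we have $\P(V_i > 1 \mid A) \ge q$, so Lemma \ref{lemma:tensorization} (applied conditionally on $A$) yields
$$
\sum_{i \in I_1} V_i \ge \frac{c_{\ref{lemma:tensorization}}\,q\,|I_1|}{\log(1/q)}
$$
with conditional probability at least $1 - \exp(-c'_{\ref{lemma:tensorization}} |I_1|)$. Translating back and using $\|z\|_2^2 \ge 1/2$ gives $\|(M_n - \lambda I)x\|_2 \ge \beta \sqrt{pn}\,\|x\|_2$ with $\beta = \beta(\alpha) > 0$. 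Since the bound is uniform in the realization of $A$, integrating over $A$ preserves it and produces the desired exponential decay with some $\gamma = \gamma(\alpha) > 0$. The only real subtlety is the symmetry-induced dependence, which forces the block decomposition, together with the need to verify after splitting $x$ that the ratio $\|z\|_\infty/\|z\|_2$ is still of order $\sqrt{p}$; once these are handled the argument reduces to a standard small-ball-plus-tensorization routine.
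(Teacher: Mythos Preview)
Your proposal is correct and follows essentially the same route as the paper: block-decompose $M_n-\lambda I$ to decouple the symmetry, isolate the half of $x$ with $\ell_2$ mass at least $1/2$ so that its $\ell_\infty/\ell_2$ ratio remains $O(\alpha\sqrt{p})$, apply Lemma~\ref{lemma:smallball} rowwise in the off-diagonal block after conditioning on the diagonal block, and finish with Lemma~\ref{lemma:tensorization}. The only cosmetic difference is that the paper first sorts the coordinates of $x$ by magnitude (so the top half automatically carries the mass) and works with $B^T y + Cz$, whereas you relabel $I_1,I_2$ and work with $Bz + Ay$; the two are equivalent.
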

\begin{proof}
	Without loss of generality, we can assume that the coordinates of $x$ are organized by their magnitudes in decreasing order.  Let $n_0 = \lceil n/2 \rceil$.  
	$$
	M_n - \lambda I = 
	\begin{pmatrix} 
	A & B \\ 
	B^T & C 
	\end{pmatrix}, \quad
	x = \begin{pmatrix} y \\ z \end{pmatrix} 
	$$
	Then, 
	$$
	\|y\|_2 \geq \frac{1}{2} \|x\|_2
	$$
	so then $\|y\|_{\infty}/\|y\|_2 \leq 2 \alpha \sqrt p$.  Fix a $w \in \mathbb{R}^n$ and let 
	$V_j = \frac{16}{p \|y\|_2^2}((B^T y + C z)_j - w_j)^2$.  Also, by our assumptions,
	$$
	\frac{c_{\ref{lemma:smallball}}}{(\|y\|_\infty/ \|y\|_2)^2 + p} \geq \frac{c_{\ref{lemma:smallball}}}{4\alpha^2 + 1}
	$$
	By Lemmas \ref{lemma:smallball} and \ref{lemma:tensorization} we have
	$$
	\P \left( \|(M_n - \lambda) x\|_2, \beta \sqrt{pn } \|x\|_2 \right) \leq \P \left( \|B^T y + Cz\|_2, 2 \beta \sqrt{pn }  \|y\|_2 \right) 
	$$
\end{proof}

\subsection{Compressible Vectors}

\begin{proposition}
	$p^{-1} \leq M \leq c_{\ref{prop:compressible}} n$.
\end{proposition}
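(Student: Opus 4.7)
The statement to establish is the two-sided bound $p^{-1}\le M\le c_{\ref{prop:compressible}} n$, where $M=n/(np)^{1/16}$ is the value fixed earlier in the incompressible vectors section and $c_{\ref{prop:compressible}}$ is the constant appearing in the hypotheses of Proposition \ref{prop:compressible}. So the plan is just to verify that, under the standing assumption $p\ge n^{-1+\delta}$, the quantity $M$ actually lies in the window where Proposition \ref{prop:compressible} (and hence Corollary \ref{corollary:comp}) is applicable. This is a routine but necessary sanity check, not a deep claim.

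First I would substitute $M=n/(np)^{1/16}$ into the left inequality. The relation $p^{-1}\le n/(np)^{1/16}$ is equivalent, after multiplying both sides by $(np)^{1/16}/p^{-1}=p(np)^{1/16}$, to $(np)^{1/16}\le np$, i.e.\ $(np)^{15/16}\ge 1$, which holds as soon as $np\ge 1$. Since the hypothesis $p\ge n^{-1+\delta}$ gives $np\ge n^{\delta}$, this is satisfied for all $n$ sufficiently large (and the paper's asymptotic convention $n\to\infty$ is in force).

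Next I would verify the right inequality. Dividing $M\le c_{\ref{prop:compressible}} n$ through by $n$ gives $(np)^{-1/16}\le c_{\ref{prop:compressible}}$, i.e.\ $np\ge c_{\ref{prop:compressible}}^{-16}$. Again $np\ge n^\delta\to\infty$ absorbs this, for $n$ large depending only on $\delta$ and on the constant $c_{\ref{prop:compressible}}$ (which was fixed in Proposition \ref{prop:compressible} and depends only on $B$). In particular both inequalities hold eventually in $n$, with all implicit thresholds depending only on $\delta$ and $B$, consistent with the paper's blanket convention on constants stated in Section \ref{sec:notation}.

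The only potential obstacle is notational rather than mathematical: one has to be sure that the $M$ appearing here is the specific value $n/(np)^{1/16}$ set in the boxed display of Section \ref{sec:incompressible}, and not a free parameter. With that identification, the argument is a one-line calculation in each direction, and no new probabilistic input is required, since Proposition \ref{prop:compressible} and Corollary \ref{corollary:comp} themselves supply the content; we are merely certifying that the incompressible-vector choice of $M$ falls inside their admissible range.
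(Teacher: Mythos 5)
Your verification of the inequality is arithmetically correct, but it does not correspond to what the paper's proof of this ``proposition'' actually establishes. The appendix proposition is mislabeled: the body of the proof that follows (the block decomposition of $M_n - \lambda I$, the case analysis $p \geq \tfrac{1}{4} n^{-1/3}$ versus $p < \tfrac14 n^{-1/3}$, the nets $\mathcal{N}_I$, $\mathcal{N}_J$, $\mathcal{N}_0$, the small-ball estimate via Corollary~\ref{cor:smallball}, and the cardinality bound $|\mathcal{M}| \leq \exp(-c'n/2)$) is the proof of Proposition~\ref{prop:compressible} itself --- the statement that compressible vectors are unlikely to be near the kernel of $M_n - \lambda$ --- \emph{under the standing hypothesis} that $p^{-1} \leq M \leq c_{\ref{prop:compressible}} n$. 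That inequality is not the conclusion of the appendix argument; it is the assumed range of $M$.

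The sanity check you carried out does appear in the paper, but as a throwaway remark in Section~\ref{sec:incompressible}: ``One can check that $p^{-1} \leq M$ since $np \to \infty$ so that $M$ is in the range of Corollary~\ref{corollary:comp}.'' Your proposal correctly expands that remark, verifying both sides for the specific choice $M = n/(np)^{1/16}$, and the algebra ($(np)^{15/16} \geq 1$ for the lower bound, $(np)^{1/16} \geq c_{\ref{prop:compressible}}^{-1}$ for the upper) is right. So nothing you wrote is wrong --- you proved the literal statement. But you should be aware that the ``proof'' the paper attaches to this proposition is a substantially different and much longer probabilistic argument, and that if one were asked to reproduce \emph{that}, one would need the moderately-sparse-vector net construction, Lemma~\ref{lemma:verysparse} for the dominated-tail case, and the small-ball tensorization of Corollary~\ref{cor:smallball}, none of which appear in your attempt.
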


\begin{proof}
	We begin by diving $[n]$ into two roughly equal sets.  Let $n_0 = \lceil n/2 \rceil$.  We denote this decomposition by 
	$$
	M_n -\lambda I= 
	\begin{pmatrix} 
	A & B \\ 
	B^T & C 
	\end{pmatrix}, \quad
	x = \begin{pmatrix} y \\ z \end{pmatrix}, \quad
	u = \begin{pmatrix} v \\ w \end{pmatrix}.
	$$
	where $A$ is $n_0 \times n_0$ and $C$ is $n-n_0 \times n- n_0$.
	Thus, we have the following equivalence.
	$$
	\|(M_n - \lambda I )x\|_2^2 = \|Ay + Bz\|_2^2 + \|B^T y + C z \|_2^2.
	$$
	We condition on a realization of $A$ and $C$.  
	Let 
	$$
	W:= \text{Sparse}(M) \setminus \left( \text{Comp}((8p)^{-1}, \rho) \cup \text{Dom}( (8p)^{-1}, (C_{\ref{lemma:verysparse}}K)^{-1} \right)
	$$
	
	Denote $m = (8p)^{-1}$ so $m < M/2$.
	
	\textbf{Case I:}  Let's begin by assuming $p \geq \frac{1}{4} n^{-1/3}$.  In this regime, $\ell_0 = 1$ and so $\rho = (C'_{\ref{lemma:verysparse}} K)^{-2}$.  Observe that for $x \in V$, 
	$$
	\|x_{[m+1: M]}\|_{\infty} / \|x_{[m+1: M]} \|_2 \leq C_{\ref{lemma:verysparse}} K \sqrt{8p}
	$$
	Since, $x \notin \text{Comp}(m, \rho)$, $\|x_{[m+1:M]} \|_2 \geq \rho$.
	Thus, by Corollary \ref{cor:smallball}, 
	$$
	\P \left(\|(M_n - \lambda) x\|_2 \leq (C'_{\ref{lemma:verysparse}} K)^{-3}  \sqrt{pn} \|x_{[m+1:M]}\|_2 \right) \leq \exp(- c' n).
	$$
	Now we extend this bound to all vectors in $V$.  Define $\eps = (C'_{\ref{lemma:verysparse}} K)^{-4} \rho$.  There exists an $\eps$-net $\mathcal{N} \subset V$ of cardinality less than 
	$$
	\binom{n}{M} \left( \frac{3}{\eps} \right)^M \leq \exp \left( c_{\ref{prop:compressible}} n \log \left(\frac{3e}{c_{\ref{prop:compressible}} \eps} \right) \right) .
	$$
	Since $\lim_{x \rightarrow 0} x \log(1/x) = 0$ there exists a constant $c_{\ref{prop:compressible}}$ so that $c_{\ref{prop:compressible}} \log \left(\frac{3e}{c_{\ref{prop:compressible}} \eps} \right) \leq c'/2$.  Therefore, by the union bound,
	$$
	\P(\exists x \in \mathcal{N} : \|(M_n - \lambda I) x\|_2 \leq (C'_{\ref{lemma:verysparse}} K)^{-3} \sqrt{pn} \|x_{[m+1:M]} \|_2) \leq \exp(-(c'/2) n).
	$$
	We now extend this result to all of $W$.  Assume for all $x \in \mathcal{N}$,
	$$
	\|(M_n - \lambda I) x\|_2 \geq (C'_{\ref{lemma:verysparse}} K)^{-3} \sqrt{pn} \|x_{[m+1:M]} \|_2.
	$$
	Let $x' \in V$.  There exists a $x \in \mathcal{N}$ such that $\|x'- x\|_2 \leq \eps$.  
	We have 
	\begin{align*}
	\|(M_n - \lambda I) x'\|_2 &\geq \|(M_n - \lambda I) x\|_2 - \|M_n - \lambda I\| \|x - x'\|_2 \\
	&\geq  (C'_{\ref{lemma:verysparse}} K)^{-3} \sqrt{pn} \|x_{[m+1:M]} \|_2 - K \eps \\
	&\geq \frac{1}{2} (C'_{\ref{lemma:verysparse}} K)^{-3} \sqrt{pn} \rho
	\end{align*}
	
	\textbf{Case II:} We now tackle the remaining case where $\frac{C_{\ref{thm:main}} \log n}{n} \leq p \leq \frac{1}{4} n^{-1/3}$.  Let $I, J \subset [n]$ be disjoint sets such that $|I| = m$, $|J| = M-m$.  Let $\eps, \tau$ be positive numbers to be chosen later.  The sets 
	$$
	B_I := \{ u \in B_2^n : \text{supp}(u) \subset I \},
	$$
	and 
	$$
	R_J := \{u \in \mathcal{S}^{n-1} : \text{supp}(u) \subset J \text{ and } \|u\|_\infty \leq 4C_{\ref{prop:compressible}} K \sqrt{p} \}
	$$
	admit an $\eps$-net $\mathcal{N}_I \subset B_I$ and a $\tau$-net $\mathcal{N}_J \subset R_J$ of sizes
	$$
	|\mathcal{N}_I| \leq \left( \frac{3}{\eps} \right)^{|I|}
	$$
	and
	$$
	|\mathcal{N}_J| \leq \left( \frac{3}{\tau} \right)^{|J|}.
	$$
	Let $\mathcal{N}_0$ be an $\eps$-net in $[\rho/\sqrt{2}, 1] \subset \mathbb{R}$, and let
	$$
	\mathcal{M}_{IJ} :=  \{u + l w : u \in \mathcal{N}_I, w \in \mathcal{N}_J, l \in \mathcal{N}_0 \}
	$$ 
	and 
	$$
	\mathcal{M} := \left( \bigcup_{\substack{I \subset [n], \\ |I| = m}} \bigcup_{\substack{J \subset [n], \\ |J| = M-m, I \cap J = \emptyset}} \mathcal{M}_{IJ}  \right) .
	$$
	We now verify that this is an appropriate net for $W$.  Let $x \in W$ be decomposed as $x = u_x + v_x$ where $u_x = x_{[1:m]}$ and $v_x = x_{[m+1:M]}$.  Since $x \notin \text{Comp}(m, \rho) \cup \text{Dom}(m, (C_{\ref{lemma:verysparse}} K)^{-1})$, this implies that
	\begin{equation} \label{eq:dombound}
	\|v_x\|_2 \geq \rho \text{ and } \|v_x\|_\infty < C_{\ref{lemma:verysparse}} K \sqrt{8p} \|v_x\|_2.
	\end{equation}
	Choose $\bar{u} \in \mathcal{N}_I$, $\bar{v} \in \mathcal{N}_J$ and $l \in \mathcal{N}_0$ such that
	$$
	\|\bar{u} - u\| \leq \eps , \quad \| v_x / \|v_x\|_2 - \bar{v} \|_2 \leq \tau \quad \text{ and } \quad  |\|v_x\|_2 - l | \leq \eps .
	$$
	We can easily modify the net $\mathcal{M}$ so that $\mathcal{M} \subset W$ at the cost of adjusting $\eps$ and $\tau$ by a factor of 2.  Thus, by (\ref{eq:dombound}) we have for a fixed $\bar{x} \in \mathcal{M}$
	$$
	\P\left( (M_n - \lambda I) \bar{x} \leq (C'_{\ref{lemma:verysparse}} K)^{-3} \sqrt{pn} \|v_{\bar{x}}\|_2 \right) \leq e^{-c'n}.
	$$ 
	Now for $x \in W$, 
	$$
	\|(M_n - \lambda) x \|_2 \geq\|(M_n - \lambda) \bar{x}\|_2 - \|M_n - \lambda\| (\|u_x - u_{\bar{x}}\|_2 + \|v_x - v_{\bar{x}}\|_2).
	$$
	We observe that
	$$
	\|v_x - v_{\bar{x}}\|_2 \leq \left \| \frac{v_x}{\|v_x\|_2} -  \frac{v_{\bar{x}}}{\|v_{\bar{x}}\|_2} \right \|_2 \|v_{\bar{x}}\|_2 + \|v_x\|_2 \left | 1- \frac{\|v_{\bar{x}}\|_x}{\|v_x\|_2} \right| \leq 2 \tau \|v_{\bar{x}}\|_2 + 2 \eps .
	$$
	Therefore, letting $\mu' := (C_{\ref{lemma:verysparse}} K)^{-3}$,
	$$
	\|(M_n - \lambda) x \|_2 \geq \mu' \|v_{\bar{x}}\|_2 \sqrt{pn} - K \sqrt{pn} (3 \eps + 2 \tau \|v_{\bar{x}}\|_2).
	$$
	Setting $\eps := \frac{\mu' \rho}{12K}$ and $\tau = \frac{\mu'}{8 K}$ implies 
	$$\|(M_n - \lambda) x \|_2 \geq \frac{1}{2} \mu' \rho \sqrt{pn}.$$
	To take the union bound over all points in the net, we must obtain an upperbound on the size of the cardinality of the net. 
	$$
	|\mathcal{M}| \leq \binom{n}{m} \binom{n-m}{M-m} \left( \frac{3}{\eps} \right)^m \left( \frac{3}{\tau} \right)^{M-m} \frac{1}{\eps } .
	$$
	We first bound 
	$$
	\binom{n}{m} \binom{n-m}{M-m} \leq \binom{n}{m} \binom{n}{M} \leq \left( \frac{en}{m} \right)^m \left( \frac{en}{M} \right)^M \leq (8epn)^{(8p)^{-1}} \left( \frac{e}{c} \right)^{cn}.
	$$
	Thus, 
	$$
	|\mathcal{M}| \leq \left( \frac{288 e  K p n}{\mu' \rho} \right)^{(8p)^{-1}}  \left( \frac{24 e K}{c \mu'}\right)^{cn} .
	$$
	We claim that 
	$$
	\left( \frac{288 e  K p n}{\mu' \rho} \right)^{(8p)^{-1}} \leq \left( \frac{24 e K}{c \mu'}\right)^{cn}. 
	$$
	This reduces to the assertion that 
	$$
	p^{-1} \log \left( \frac{pn}{\rho} \right) = o(n)
	$$
	which is obvious by our assumption that $np \rightarrow \infty$ and $\ell_0 = o(np)$.  Finally,
	we conclude that 
	$$|\mathcal{M}| \leq \exp(-c'n/2)$$
	if we choose $c$ small enough since $\lim_{x \rightarrow 0} x \log(1/x) = 0$.  Therefore, a union bound concludes the proof.
\end{proof}

\section{Non-centered version of Proposition \ref{prop:compressible}} \label{appendix:noncentered}
To derive an analogue of Proposition \ref{prop:compressible}. 
We begin by diving $[n]$ into two roughly equal sets.  Let $n_0 = \lceil n/2 \rceil$.  We denote this decomposition by 
$$
A_n - p(J_n - I_n) = 
\begin{pmatrix} 
E & B \\ 
B^T & C 
\end{pmatrix}, \quad
x = \begin{pmatrix} y \\ z \end{pmatrix}.
$$
To lowerbound $\|(A_n - p(J_n - I_n))x \|_2^2$, it suffices to lower bound $\|E y + B z\|_2^2$.
For very sparse vectors, we can use the sign-matching argument from section \ref{lemma:verysparse} after conditioning on a realization of $E$.  For moderately sparse vectors,  the L\'evy concentration argument is insensitive to shifts and for the net argument, we add an extra net over the low-dimensional image as in Section \ref{section:randomgraph}.  We omit the details. 

\end{document}